\title{Local tensor valuations}
\renewcommand{\thefootnote}{\fnsymbol{footnote}}
\author{Daniel Hug and Rolf Schneider}
\date{}
\newcommand{\R}{{\mathbb R}}
\newcommand{\Sn}{{\mathbb S}^{n-1}}
\newcommand{\Kn}{{\mathcal K}^n}
\newcommand{\Pn}{{\mathcal P}^n}
\newcommand{\F}{{\mathcal F}}
\newcommand{\B}{{\mathcal B}}
\newcommand{\N}{{\mathbb N}}
\newcommand{\T}{{\mathbb T}}
\newcommand{\Ha}{{\mathcal H}}
\newcommand{\D}{{\rm d}}
\newcommand{\fed}{\,\rule{.1mm}{.20cm}\rule{.20cm}{.1mm}\,}
\newcommand{\RR}{\ensuremath{\mathbb{R}}}
\DeclareMathOperator{\Nor}{Nor}
\DeclareMathOperator{\sgn}{sgn}
\newtheorem{theorem}{Theorem}
\newtheorem{lemma}{Lemma}[section]
\newtheorem{corollary}{Corollary}[section]
\newtheorem{remark} {Remark}[section]
\def\section{
\setcounter{equation}{0} \setcounter{theorem}{0} \@startsection
{section}{1}{\z@}{-4.0ex plus -1ex minus
    -.2ex}{2.3ex plus .2ex}{\bf\Large}}
\def\subsection{\@startsection{subsection}{2}{\z@}{-3.25ex plus-1ex
    minus-.2ex}{1.5ex plus.2ex}{\reset@font\bf\LARGE}}
\begin{document}

\maketitle

\begin{abstract}
The local Minkowski tensors are valuations on the space of convex bodies in Euclidean space with values in a space of tensor measures. They generalize at the same time the intrinsic volumes, the curvature measures and the isometry covariant Minkowski tensors that were introduced by McMullen and characterized by Alesker. In analogy to the characterization theorems of Hadwiger and Alesker, we give here a complete classification of all locally defined tensor measures on convex bodies that share with the local Minkowski tensors the basic geometric properties of isometry covariance and weak continuity.\\[1mm]
{\em 2010 Mathematics Subject Classification:} primary 52A20, secondary 52A22\\[1mm]
{\em Keywords:} valuation, Minkowski tensor, tensor valuation, support measure, characterization theorem, weak continuity, normal cycle
\end{abstract}

\renewcommand{\thefootnote}{{}}
\footnote{We acknowledge the support of the German Research Foundation (DFG) through the Research Unit `Geometry and Physics of Spatial Random
Systems' under the grant HU 1874/2-1.}

\section{Introduction}\label{sec1}

Additivity of set functions is a basic notion of great use in different parts of mathematics. While in its stronger version of countable additivity it constitutes the foundation of measure theory and thus is ubiquitous in analysis, a seemingly rudimentary form of additivity leads to a surprisingly rich theory in the geometry of sets in Euclidean and other spaces. Restricting ourselves here to the space $\Kn$ of convex bodies (nonempty, compact, convex subsets) in Euclidean space $\R^n$, we say that a function $\varphi$ on $\Kn$ with values in an abelian group (or an abelian semigroup with cancellation law) is {\em additive} or a {\em valuation} if
$$ \varphi(K\cup M)+\varphi(K\cap M)=\varphi(K)+\varphi(M)$$
whenever $K,M,K\cup M\in\Kn$. The space $\Kn$ may be replaced by an intersectional subclass, and also suitable classes of sets more general than convex bodies may be admitted. Real valuations on convex polytopes were first put to good use in Dehn's solution of Hilbert's third problem, and since then they play an essential role in the dissection theory of polytopes (see \cite{McM93}). 

An important development was initiated when Blaschke \cite[\S43]{Bla37} treated the kinematic integral
$$ \psi(K,M)= \int_{G_n}\chi(K\cap gM)\,\mu(\D g)$$
for $K,M\in\Kn$ (it is insignificant that Blaschke considered a slightly different situation). Here, $\chi(L)=1$ for $L\in\Kn$, $\chi(\emptyset)=0$, and $\mu$ denotes the suitably normalized Haar measure on $G_n$, the motion group of $\R^n$. Thus, $\psi(K,M)$ is the total invariant measure of the rigid motions that bring $M$ into a position of nonempty intersection with $K$. The determination of such integrals is a prerequisite for answering some classical questions on geometric probabilities. Blaschke pointed out that the function $\psi(K,\cdot)$, for fixed $K$, is a rigid motion invariant valuation on $\Kn$, and that, therefore, it would be helpful to have an axiomatic characterization of the classical examples of such valuations, the intrinsic volumes. These functionals on $\Kn$ can be derived from the notion of volume, via the coefficients in the Steiner formula
$$ V_n(K+\rho B^n) = \sum_{j=0}^n \rho^{n-j}\kappa_{n-j}V_j(K),\qquad \rho\ge 0,$$
for $K\in\Kn$. Here $V_n$ denotes the volume, $+$ is vector addition, $B^n$ is the unit ball, and $\kappa_n=V_n(B^n)$. The function $V_j:\Kn\to\R$ defined in this way for $j=0,\dots,n$, the $j$th {\em intrinsic volume}, is a rigid motion invariant valuation, but it shares these properties with many other functions on $\Kn$. To single out the intrinsic volumes, Blaschke originally suggested the property of local boundedness, and Hadwiger repeatedly (\cite[p. 346]{Had50}, \cite[footnote 3]{Had51}) emphasized that a characterization on this basis would be desirable and useful. However, an example in \cite[p. 229]{McMS83} shows that this is not possible. It is the condition of continuity, with respect to the Hausdorff metric, with which Hadwiger succeeded.

\vspace{2mm}

\noindent{\bf Hadwiger's characterization theorem} \;{\em If $\varphi:\Kn\to\R$ is a continuous valuation which is invariant under proper rigid motions, then there are constants $c_0,\dots,c_n$ such that
$$ \varphi(K)= \sum_{j=0}^n c_jV_j(K)$$
for $K\in\Kn$.}

\vspace{2mm}

Hadwiger proved this in \cite{Had51} for $n=3$ and in \cite{Had52} for general $n$ (the proof is reproduced in \cite[6.1.10]{Had57}) and presented integral-geometric applications in \cite{Had50} and \cite{Had56}. For example, since the function $\psi(K,\cdot)$ defined above is continuous, Hadwiger's theorem yields that it is of the form $\psi(K,M)= \sum_{j=0}^n c_j(K)V_j(M)$. A repetition of the argument with variable $K$ shows that $\psi(K,M)= \sum_{i,j=0}^n c_{ij}V_i(K)V_j(M)$, with constants $c_{ij}$, which are then easily determined by inserting balls of different radii. The result is the {\em principal kinematic formula} of Blaschke, Chern and Santal\'{o}, for convex bodies. In this elegant way, several integral-geometric formulas can be proved. Usually, they admit also other, though more complicated proofs. However, for the following result, called {\em Hadwiger's general integral-geometric theorem}, the approach via Hadwiger's characterization theorem (reproduced in \cite[Theorem 5.1.2]{SW08}) is up to now the only known proof. If $\varphi:\Kn\to\R$ is a continuous valuation, then
$$ \int_{G_n} \varphi(K\cap g M)\,\mu(\D g) =\sum_{j=0}^n\varphi_{n-j}(K) V_j(M)$$
for $K,M\in\Kn$, where the coefficients $\varphi_{n-j}(K)$ are given by
$$ \varphi_{n-j}(K) =\int_{A(n,j)} \varphi(K\cap E)\,\mu_j(\D E);$$
here $A(n,j)$ is the affine Grassmannian of $j$-planes in $\R^n$ and $\mu_j$ is its suitably normalized motion invariant measure.

In view of such applications and their interpretations, Hadwiger's characterization theorem is esteemed so highly that, for example, Gian--Carlo Rota \cite{Rot98}, in a Colloquium Lecture at an Annual Meeting of the American Mathematical Society, called it the `Main Theorem of Geometric Probability'. Nowadays, the use of real translation invariant valuations (called `geometric functionals', if they have certain boundedness and mesasurability properties) in stochastic geometry goes far beyond classical geometric probabilities. We refer to \cite[Chap. 9]{SW08}, where densities of geo\-metric functionals for random sets are treated, and for example to \cite{HLS13}, which investigates asymptotic covariances and multivariate central limit theorems for geometric functionals in relation to Boolean models.

The valuation property is shared by many functions arising naturally in convex geometry; they may, for example, be vector-valued, measure-valued, body-valued, or function-valued. In particular, the intrinsic volumes have local generalizations in the form of different versions of measures, known as {\em area measures} (defined on the unit sphere), {\em curvature measures} (defined on $\R^n$), and {\em support measures} (defined on sets of support elements). For each of these, characterization theorems of Hadwiger type, with suitably modified assumptions, have been proved, in \cite{Sch75}, \cite{Sch78}, \cite{Gla97} (see also the formulations in \cite[Sec. 4.2, Notes 11, 12]{Sch14}). Among the integral-geometric applications are a short proof of Federer's \cite{Fed59} kinematic formulas for curvature measures of convex bodies in \cite{Sch78} and a new type of kinematic formulas for support measures in \cite{Gla97} (a technical restriction was later removed in \cite{Sch99}).

The theory of valuations on convex bodies comprised already an impressive body of results, as documented by the survey articles \cite{McMS83}, \cite{McM93}, when Klain \cite{Kla95} published a new and shorter proof of Hadwiger's characterization theorem and noticed some new consequences. Klain's proof is reproduced in the book of Klain and Rota \cite{KR97}, which gives an excellent introduction to valuations and their integral-geometric applications, with side-views to some discrete aspects. (The proof is also reproduced in \cite[Thm. 6.4.14]{Sch14}). Klain's proof gave new impetus to the theory of valuations and, in particular, Hadwiger's theorem became anew the incentive and template for a large number of characterization and classification results for valuations. This second phase of valuation theory, beginning in the late 1990s, has two main branches. One of these is a deep algebraic structure theory for valuations, developed mainly by Semyon Alesker. Hints to the literature are found in the very brief sketch in \cite[Sec. 6.5]{Sch14}.
To illuminate the considerable impact that this new theory had on integral geometry, we mention the articles \cite{BF10}, \cite{BFS14} and the surveys given by Bernig \cite{Ber10} and Fu \cite{Fu11}, both under the title of `Algebraic integral geometry'. The role of measure-valued valuations in this new theory is revealed in \cite{BFS14} and \cite{Wan14}. The other branch of valuation theory, initiated by Monika Ludwig, has produced a series of important characterization theorems where the assumed invariance, covariance or contravariance is with respect to the groups ${\rm GL}(n)$ or ${\rm SL}(n)$, with or without translation invariance. We refer the reader to \cite[Sec. 10.16]{Sch14} for a brief survey with hints to the original literature.

The simple geometric nature of the properties appearing in Hadwiger's characterization theorem and the close relation between the notions of `additivity' (as defined above) and that of an `extensive property' (as coined by Richard Tolman and used in the physics of interacting particle systems) may be reasons why the intrinsic volumes have found surprising applications in statistical physics (under the name of `Minkowski functionals'). The survey by Mecke \cite{Mec00} explains how Minkowski functionals are used to describe the morphology of random spatial configurations and how they are applied to the investigation of  physical properties of materials such as complex fluids and porous media. A more recent trend employs, even more surprisingly, the natural tensor-valued generalizations of the intrinsic volumes, called `Minkowski tensors', in physics. For small dimensions and low ranks, Minkowski tensors have been applied, and are finding increasing interest, in the investigation of real materials, in particular of the morphology and anisotropy analysis of cellular, granular or porous structures. We refer (in chronological order) to \cite{BDMW02, SchT10, SchT11, SchT12, HHKM13}, for example. 

With the Minkowski tensors, we come to the central goal of the present paper. For these tensor functions, a natural extension of Hadwiger's characterization theorem has been established previously, and we now aim at a similar classification of their local versions. In early analogues of Hadwiger's characterization theorem, real-valued valuations were replaced by vector-valued valuations, resulting in characterizations of moment vectors and curvature centroids (\cite{HS71}, \cite{Sch72}). The next step of extension took some time. When McMullen \cite{McM97} initiated a thorough study of tensor-valued versions of the intrinsic volumes, he also took a possible characterization into consideration. As it turned out, Alesker was in possession of the right results from his work \cite{Ale99a} on rotation invariant valuations to prove in \cite{Ale99b} a characterization theorem for the (suitably modified) Minkowski tensors. The step from vector-valued to tensor-valued valuations with covariance properties with respect to the isometry group required new methods for their characterization. Alesker made use of representation theory for the rotation group. An approach to Alesker's characterization theorem using essentially only Hadwiger's techniques has so far not been successful.

Alesker's characterization of Minkowski tensors and the previously mentioned characterizations of the local versions of the intrinsic volumes call immediately for a classification of local versions of the Minkowski tensors, in the form of tensor-valued measures, by their most essential geometric properties. Such a local characterization theorem, which turned out to be a non-trivial task, is the subject of the present paper. One motivation for this is the expectation that the local versions of the Minkowski tensors should be more flexible for integral-geometric applications, since in contrast to the global Minkowski tensors they do not satisfy non-trivial linear relations (see the next section). A first step of a characterization theorem, namely for local tensor valuations on convex polytopes, was accomplished in \cite{Sch12}. The extension to general convex bodies is, however, far from straightforward, since unexpected local tensor valuations have come up in the polytopal case. Among these, we shall single out those admitting a continuous extension to general convex bodies. For the precise description of the situation, some more elaborate explanations are needed, which we postpone to the next section. Here we only point out that Alesker's characterization theorem will be recalled as Theorem \ref{Theorem2.1}, the local characterization theorem for polytopes will be repeated and refined in Theorem \ref{Theorem2.2}, and the main result of this paper is Theorem \ref{Theorem2.3}.

As indicated, if one wants to extend the characterization theorem for local tensor valuations from polytopes to general convex bodies, a crucial issue is whether the tensor measure valued mappings that appear in the polytopal case admit weakly continuous extensions to general convex bodies. Section \ref{sec4} provides a positive answer in certain cases, and Section \ref{sec5} gives a negative answer in the remaining cases. A suitable refinement of this negative result finally leads to a proof of the main theorem. The employed methods in both parts are very different. In Section \ref{sec4}, a tensor-valued, rotation covariant differential form is defined and evaluated at the normal cycle of a convex set. Then basic geometric measure theory is used to show that this defines a (weakly) continuous extension of some of the tensor measure valued mappings known from the polytopal case. The main tool of Section \ref{sec5} is the approximation of a highly symmetric convex body by polytopes with controllable symmetries. The approximating polytopes are constructed by lifting a polytopal complex, which is defined by a lattice in $\R^{n-1}$, to a paraboloid of revolution. For reasons indicated in Section 5, a distinction has to be made between dimensions at least four and dimension three, where the construction is more delicate.

\section{Explanations and results}\label{sec2}

We work in $n$-dimensional Euclidean space $\R^n$ ($n\ge 2$) with a fixed scalar product $\langle\cdot\,,\cdot\rangle$  and induced norm $\|\cdot\|$. The scalar product is also used to identify $\R^n$ with its dual space. We write $B^n$ for the unit ball, $\Sn$ for the unit sphere of $\R^n$, and $\Sigma^n$ for the product space $\R^n\times\Sn$.  Sometimes we identify $\R^n\times\R^n$ with $\R^{2n}$. The $k$-dimensional Hausdorff measure in a Euclidean space is denoted by $\Ha^k$. The constant $\omega_n=2\pi^{n/2}/\Gamma(n/2)$ is the $(n-1)$-dimensional Hausdorff measure of $\Sn$ and $\kappa_n=n\omega_n$ is the volume of $B^n$. 

For $p\in\N_0$, let $\T^p$ be the vector space of symmetric tensors of rank $p$ on $\R^n$. The symmetric tensor product (see, e.g., Satake \cite[Chap.~5, Sec.~4.2]{Sat75}) is denoted by $\odot$, but we shall throughout use the abbreviations
$$ a\odot b=:ab,\qquad \underbrace{a\odot\cdots\odot a}_r=:a^r$$
for symmetric tensors $a,b$ and for $r\in \N$. Moreover, $a^0:= 1$ for $a\not=0$. 
Since we have identified $\R^n$ with its dual space via the scalar product, each symmetric $p$-tensor is a symmetric $p$-linear functional on $\R^n$, and for $x\in\R^n$ and $r\in\N$ we have $x^r(y_1,\dots,y_r)=\langle x,y_1\rangle\cdots \langle x,y_r\rangle$ for $y_1,\dots,y_r\in\R^n$. 

For a topological space $X$, we write $\B(X)$ for the $\sigma$-algebra of Borel sets of $X$. If $X$ is a metric space, we denote by $\B_b(X)$ the ring of bounded Borel sets in $\B(X)$.

By $\Kn$ we denote the space of convex bodies (nonempty compact convex subsets) of $\R^n$, equipped with the Hausdorff metric. For basic facts about convex bodies used in the following, we refer to \cite{Sch14}. The subset of convex polytopes is denoted by $\Pn$. Let $K\in\Kn$. By a {\em support element} of $K$ we understand a pair $(x,u)$ where $x$ is a boundary point of $K$ and $u$ is an outer unit normal vector of $K$ at $x$. The set of all support elements of $K$ is denoted by ${\rm Nor}\,K$ and is called the {\em normal bundle} of $K$. It is a closed subset of the space $\Sigma^n$. For $x\in\R^n\setminus K$, the point $p(K,x)$ is the unique point in $K$ nearest to $x$, and the unit vector $u(K,x):= (x-p(K,x))/\|x-p(K,x)\|$ points from $p(K,x)$ to $x$. Clearly, $(p(K,x),u(K,x))\in{\rm Nor}\,K$. For $\rho>0$ and $\eta\in\B(\Sigma^n)$, the volumes of the local parallel sets
$$
M_\rho(K,\eta):=\{x\in (K+\rho B^n)\setminus K:(p(K,x),u(K,x))\in\eta\}
$$
permit a polynomial expansion of Steiner-type,
$$
\Ha^n(M_\rho(K,\eta))=\sum_{k=0}^{n-1}\rho^{n-k}\kappa_{n-k}\Lambda_{k}(K,\eta).
$$
This defines the {\em support measures}  $\Lambda_0(K,\cdot),\dots,\Lambda_{n-1}(K,\cdot)$ of $K$ (also known as generalized curvature measures). They are re-normalized versions of the measures $\Theta_k(K,\cdot)$ introduced in \cite[Sec. 4.2]{Sch14}, namely
\begin{equation}\label{2.2a}
n\kappa_{n-k}\Lambda_k(K,\cdot)=\binom{n}{k}\Theta_k(K,\cdot).
\end{equation}
The measures $\Lambda_k(K,\cdot)$ are concentrated on ${\rm Nor }\,K$. We need them here for a description of the Minkowski tensors.

For $K\in\Kn$, the {\em Minkowski tensors} are defined by
\begin{equation}\label{2.2}
\Psi_r(K) = \Phi^{r,0}_n(K):= \frac{1}{r!}\int_K x^r\,\Ha^n(\D x)
\end{equation}
and
\begin{equation}\label{2.3}
\Phi_k^{r,s}(K):= c_{n,k}^{r,s} \int_{\Sigma^n} x^ru^s\,\Lambda_k(K,\D(x,u))
\end{equation}
with 
$$  c_{n,k}^{r,s} := \frac{1}{r!s!}\frac{\omega_{n-k}}{\omega_{n-k+s}}$$
for $r,s\in\N_0$ and $k\in\{0,\dots,n-1\}$; for convenience, we put $\Phi_k^{r,s}=0$ for all other $r,s,k$. We remark that the moment tensor $\Psi_r$ is a very natural construction and that the tensors (\ref{2.3}) necessarily come up if $\Psi_r$ is applied to a parallel body. In fact, for $\rho>0$, the Steiner-type formula
$$ \Psi_r(K+\rho B^n) = \sum_{k=0}^{n+r} \rho^{n+r-k}\kappa_{n+r-k}\sum_{s\in{\mathbb N}_0}\Phi_{k-r+s}^{r-s,s}(K)$$
holds, which was proved in \cite{Sch00} with other notations. See also \cite[Subsection 5.4.2]{Sch14}.

Each Minkowski tensor $\Phi_k^{r,s}$ defines a mapping $\Gamma:\Kn\to\T^p$, for $p=r+s$, which is a valuation and is continuous (with respect to the topology on $\Kn$ induced by the Hausdorff metric and the standard topology on $\T^p$).  Moreover, $\Gamma$ is isometry covariant, that is, it is rotation covariant and has polynomial translation behavior. Here, rotation covariance is defined by $\Gamma(\vartheta K)=\vartheta\Gamma(K)$ for $\vartheta\in{\rm O}(n)$, where the rotation group ${\rm O}(n)$ operates in the standard way on $\T^p$, namely by 
$$ (\vartheta T)(x_1,\dots,x_p)= T(\vartheta^{-1}x_1,\dots,\vartheta^{-1}x_p)$$
for $T\in\T^p$ and $x_1,\dots,x_p\in\R^n$. Polynomial translation behavior of $\Gamma$ means that $\Gamma(K+t)$ is a tensor polynomial in $t\in\R^n$, that is, there are tensors $\Gamma_{p-j}(K)\in\T^{p-j}$, independent of $t$,  such that
$$ \Gamma(K+t)= \sum_{j=0}^p \Gamma_{p-j}(K)t^j$$
for all $t\in\R^n$ and all $K\in\Kn$. (We define $0^0:=1$ here.)

The listed properties are sufficient to essentially characterize the Minkowski tensors. Here `essentially' refers to the facts that linear combinations with constant coefficients preserve the properties and that the metric tensor $Q$, defined by
$$ Q(x,y):= \langle x,y\rangle\qquad\mbox{for }x,y\in \R^n,$$
is also isometry covariant. Therefore, multiplication by a power of the metric tensor also preserves the listed properties. The following characterization theorem was proved in \cite{Ale99b}.

\begin{theorem}[Alesker]\label{Theorem2.1}
Let $p\in{\mathbb N}_0$. The real vector space of continuous, isometry covariant valuations on ${\mathcal K}^n$ with values in ${\mathbb T}^p$ is spanned by the tensor valuations $Q^m\Phi_k^{r,s}$, where $m,r,s \in{\mathbb N}_0$ satisfy
$2m+r+s=p$ and where $k\in\{0,\dots,n\}$, but $s=0$ if $k=n$.
\end{theorem}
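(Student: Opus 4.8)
The plan is as follows. The inclusion of the listed maps in the stated space is immediate: each $\Phi_k^{r,s}$ is a continuous, isometry covariant valuation with values in $\T^{r+s}$ (as recalled above), the metric tensor $Q$ is ${\rm O}(n)$-invariant, and both properties --- being a continuous valuation and being isometry covariant --- are preserved under real linear combinations and under multiplication by $Q$; since $Q^m\Phi_k^{r,s}$ has values in $\T^{2m+r+s}=\T^p$, it lies in the space in question. Their linear independence under the stated index restrictions is verified by evaluating a hypothetical relation on balls of varying radii and on segments and lower-dimensional bodies, so the span has the asserted dimension. The substance of the theorem is the reverse inclusion, and here I would follow Alesker.

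First I would reduce to the translation invariant case. Writing $\Gamma(K+t)=\sum_{j=0}^p\Gamma_{p-j}(K)t^j$ and comparing the two expansions of $\Gamma(K+s+t)$, one sees that each coefficient map $\Gamma_{p-j}$ is again a continuous, rotation covariant valuation with polynomial translation behaviour, and that the top coefficient $\Gamma_0\in\T^0=\R$ is translation invariant; by Hadwiger's characterization theorem, $\Gamma_0=\sum_{k=0}^n c_kV_k$. Since the $t^p$-coefficient of $K\mapsto\Phi_k^{p,0}(K+t)$ is a fixed nonzero multiple of $V_k(K)$ and each $\Phi_k^{p,0}$ lies in our span, subtracting a suitable combination of these tensors lowers the degree of the translation polynomial of $\Gamma$. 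Iterating, one is left with the case where $\Gamma$ has translation polynomial of degree $\le d$ for some $d<p$: then $\Gamma_{p-d}$ is a translation invariant, rotation covariant, $\T^{p-d}$-valued valuation, hence --- granting the translation invariant version of the theorem in rank $p-d$ --- a combination of $Q^m\Phi_k^{0,s}$; the $t^d$-coefficient of $K\mapsto Q^m\Phi_k^{d,s}(K+t)$ is a fixed nonzero multiple of $Q^m\Phi_k^{0,s}(K)$, so subtracting the corresponding element of our span again lowers $d$. One is thus reduced to proving that a continuous, translation invariant, rotation covariant valuation $\Gamma:\Kn\to\T^p$ lies in the span of the $Q^m\Phi_k^{0,s}$ with $2m+s=p$, $0\le k\le n$, and $s=0$ if $k=n$.

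For this translation invariant statement I would first apply McMullen's homogeneous decomposition to write $\Gamma=\sum_{k=0}^n\Gamma_k$ with $\Gamma_k$ continuous, translation invariant, rotation covariant and homogeneous of degree $k$, and treat the degrees separately. The case $k=n$ is elementary: the real continuous translation invariant valuations homogeneous of degree $n$ are the multiples of $V_n$, so $\Gamma_n=V_n\cdot T$ for a fixed tensor $T\in\T^p$, and rotation covariance forces $T$ to be ${\rm O}(n)$-invariant, so $T\in\R Q^{p/2}$ and $\Gamma_n\in\R Q^{p/2}\Phi_n^{0,0}$ (for $p$ even, and $\Gamma_n=0$ otherwise) --- this is the origin of the restriction $s=0$ at $k=n$. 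For $k\le n-1$ I would use representation theory: decompose $\T^p=\bigoplus_{2m\le p}Q^m\mathcal{H}_{p-2m}^n$ into its irreducible ${\rm O}(n)$-components, where $\mathcal{H}_\lambda^n$ denotes the harmonic symmetric tensors of rank $\lambda$, and compose $\Gamma_k$ with the equivariant projections, so that one may assume $\Gamma_k$ takes values in a single $\mathcal{H}_\lambda^n$ with $\lambda\le p$ and $\lambda\equiv p\pmod{2}$. Pairing with a fixed complex isotropic power $\overline{w}^\lambda$, $w=e_1+ie_2$, then yields a continuous, translation invariant, degree-$k$ scalar valuation that is invariant under the stabilizer ${\rm SO}(n-2)$ of the coordinate $2$-plane through $w$ and transforms with weight $\lambda$ under the complementary rotation group ${\rm SO}(2)$. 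Alesker's classification of such partially rotation invariant continuous valuations from \cite{Ale99a} identifies this space of scalar valuations, for each pair $(k,\lambda)$, as exhausted by the corresponding pairings of the Minkowski tensors $\Phi_k^{0,s}$ and their $Q$-multiples; transporting back along ${\rm O}(n)$-equivariance --- using that an ${\rm O}(n)$-equivariant valuation into the irreducible module $\mathcal{H}_\lambda^n$ is determined by such a pairing, since the ${\rm O}(n)$-orbit of $w^\lambda$ spans $\mathcal{H}_\lambda^n$ --- exhibits $\Gamma_k$ as a linear combination of the required $Q^m\Phi_k^{0,s}$. Reassembling over the isotypic components, the homogeneity degrees and the first step then gives the theorem.

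The genuinely hard input is the scalar classification just invoked: the description of all continuous, translation invariant valuations on $\Kn$ that are invariant only under the point stabilizer ${\rm O}(n-1)$ (equivalently, under ${\rm SO}(n-2)$ together with a prescribed rotational weight). This is the representation-theoretic core of \cite{Ale99a}, and no proof of it --- hence of the present theorem --- by essentially Hadwiger-type techniques is known. A secondary technical burden is the bookkeeping in the representation-theoretic step, namely matching the isotypic components of the various $Q^m\Phi_k^{0,s}$ against those of $\Gamma_k$ so that the index ranges emerge exactly as stated, and, in the reduction step, controlling the degrees of the translation polynomials throughout.
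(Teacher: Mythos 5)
The paper does not prove Theorem \ref{Theorem2.1} at all: it is quoted as Alesker's theorem with a reference to \cite{Ale99b}, so there is no in-paper argument to compare yours against. Your outline is essentially a reconstruction of Alesker's published route --- the reduction along the polynomial translation expansion (the same device this paper borrows from \cite{Ale99b} in Section \ref{sec3} to deduce Theorem \ref{Theorem2.2} from Theorem \ref{Theorem3.2} and Theorem \ref{Theorem2.3} from Theorem \ref{Theorem5.1}), McMullen's homogeneous decomposition, and then a representation-theoretic step reducing an ${\rm O}(n)$-covariant tensor valuation to scalar valuations with partial rotational symmetry, which are classified in \cite{Ale99a}. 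As you say yourself, that classification is the entire substance of the matter; since you invoke it as a black box, what you have written is a proof strategy rather than a proof, and it stands or falls with \cite{Ale99a} exactly as the paper's citation does. Be careful also that the input you attribute to \cite{Ale99a} (continuous, translation invariant valuations invariant under the point stabilizer ${\rm O}(n-1)$, respectively ${\rm SO}(n-2)$ with a prescribed weight) is stated rather loosely; Alesker's paper classifies continuous rotation invariant valuations that are polynomial with respect to translations, and the precise interface used in \cite{Ale99b} should be checked against the source before you lean on it.

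One concrete error: your claim that the valuations $Q^m\Phi_k^{r,s}$ with $2m+r+s=p$ are linearly independent is false. They satisfy the non-trivial linear relations discovered by McMullen \cite{McM97}, which is exactly why the dimensions of these spaces had to be determined separately in \cite{HSS08a}, as the paper points out immediately after stating Theorem \ref{Theorem2.1}. The theorem asserts only spanning, so this does not damage the statement, but the independence claim (and the proposed verification on balls and segments) should be deleted; linear independence holds for the \emph{local} tensor valuations on polytopes (Theorem \ref{Theorem3.1}), not for the global Minkowski tensors.
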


The dimensions of the vector spaces of continuous, isometry covariant tensor valuations of a fixed rank and a given degree of homogeneity were explicitly determined in \cite{HSS08a}. This required some effort, since there exist non-trivial linear relations between the tensor functions $Q^m\Phi_k^{r,s}$, which had been discovered by McMullen \cite{McM97}. The existence of these linear relations was an obstruction to applying the characterization theorem in Hadwiger's fashion, to derive integral-geometric formulas. Instead, such formulas for Minkowski tensors were proved by direct, cumbersome computations, in \cite{HSS08b}. Recently, the modern structure theory of valuations provided a new approach to part of these and additional new integral-geometric formulas for tensor valuations, see \cite{BH14}.

\vspace{2mm}

The natural local versions of (\ref{2.3}), which we call {\em local Minkowski tensors}, are defined by
\begin{equation}\label{2.4}
\phi_k^{r,s}(K,\eta):= c_{n,k}^{r,s} \int_\eta x^ru^s\,\Lambda_k(K,\D(x,u))
\end{equation}
for $\eta\in\B(\Sigma^n)$ and $r,s\in\N_0$, $k\in\{0,\dots,n-1\}$. Each local Minkowski tensor $\phi_k^{r,s}$ defines a mapping from $\Kn\times\B(\Sigma^n)$ into $\T^p$, for $p=r+s$.

Generally, for a mapping $\Gamma:\Kn\times \B(\Sigma^n) \to \T^p$ we consider the following properties. Here we write $\eta+t:=\{(x+t,u):(x,u)\in\eta\}$ and $\vartheta \eta:= \{(\vartheta x,\vartheta u):(x,u)\in\eta\}$ for $\eta\in\B(\Sigma^n)$, $t\in\R^n$ and $\vartheta\in{\rm O}(n)$.

\noindent $\bullet$\; $\Gamma$ is {\em translation covariant of degree} $q$, where $0\le q\le p$, if
\begin{equation}\label{1} 
\Gamma(K+t,\eta+t)= \sum_{j=0}^q \Gamma_{p-j}(K,\eta)\frac{t^j}{j!}
\end{equation}
with tensors $\Gamma_{p-j}(K,\eta)\in\T^{p-j}$, for all $K\in\Kn$, $\eta\in\B(\Sigma^n)$ and $t\in\R^n$ (the denominator $j!$ appears for convenience); here $\Gamma_p=\Gamma$. In particular, $\Gamma$ is called {\em translation invariant} if it is translation covariant of degree zero. 

\noindent $\bullet$\; $\Gamma$ is {\em rotation covariant} if $\Gamma(\vartheta K,\vartheta\eta)= \vartheta \Gamma(K,\eta)$ for all $K\in\Kn$, $\eta\in\B(\Sigma^n)$ and $\vartheta\in{\rm O}(n)$.

\noindent $\bullet$\; $\Gamma$ is {\em isometry covariant} (of degree $q$) if it is translation covariant of some degree $q\le p$ (and hence of degree $p$) and rotation covariant.

\noindent $\bullet$\; $\Gamma$ is {\em locally defined} if for $\eta\in {\mathcal B}(\Sigma^n)$ and $K,K'\in{\mathcal K}^n$ with $\eta\cap {\rm Nor}\,K = \eta\cap {\rm Nor}\,K'$ the equality $\Gamma(K,\eta)=\Gamma(K',\eta)$ holds. 

\noindent $\bullet$\; If $\Gamma(K,\cdot)$ is a $\T^p$-valued measure for each $K\in\Kn$, then $\Gamma$ is {\em weakly continuous} if for each sequence $(K_i)_{i\in\N}$ of convex bodies in $\Kn$ converging to a convex body $K$ the relation
$$ \lim_{i\to\infty} \int_{\Sigma^n} f\,\D\Gamma(K_i,\cdot) = \int_{\Sigma^n} f\,\D\Gamma(K,\cdot)$$
holds for all continuous functions $f:\Sigma^n\to\R$ (the integral is defined coordinate-wise).

In the previous definitions, the set ${\mathcal K}^n$ may be replaced by ${\mathcal P}^n$.

The particular mapping $\Gamma= \phi_k^{r,s}$ has the following properties, as a consequence of the known properties of the support measures (which are found in \cite[Sec.~4.2]{Sch14}). For each $K\in\Kn$, $\Gamma(K,\cdot)$ is a $\T^p$-valued measure, and $\Gamma$ is weakly continuous. For each $\eta\in\B(\Sigma^n)$, $\Gamma(\cdot,\eta)$ is measurable and is a valuation. The mapping $\Gamma$ is isometry covariant. In fact, the translation covariance follows from
\begin{eqnarray}
\phi_k^{r,s}(K+t,\eta+t)&=& c_{n,k}^{r,s} \int_{\eta+t} x^ru^s\,\Lambda_k(K+t,\D(x,u))\nonumber\\
& =& c_{n,k}^{r,s} \int_\eta (x+t)^ru^s\,\Lambda_k(K,\D(x,u))\nonumber\\
&=& \sum_{i=0}^r\phi_k^{r-i,s}(K,\eta)\frac{t^i}{i!}. \label{2.7}
\end{eqnarray}
Finally, the mapping $\Gamma$ is locally defined.

As mentioned, these properties follow from the corresponding properties of the support measures. If one replaces $\Kn$ by the space $\Pn$ of polytopes, then it was noted by Glasauer \cite[Lem.~1.3]{Gla97} that the latter properties,  without the valuation property and the weak continuity, are already sufficient to characterize the linear combinations of the support measures. This was one of the motivations for the following considerations about local Minkowski tensors of polytopes.

For a polytope $P\in\Pn$, we denote by $\F_k(P)$ the set of $k$-dimensional faces of $P$, for $k\in\{0,\dots,n\}$. For $F\in\F_k(P)$, the set $\nu(P,F)=N(P,F)\cap\Sn$ is the set of outer unit normal vectors of $P$ at its face $F$ (see \cite[Sec.~2.4]{Sch14} for the normal cone $N(P,F)$). The local Minkowski tensors of a polytope $P$ have the explicit representation
\begin{equation}\label{2.6a} 
\phi_k^{r,s}(P,\eta)= C_{n,k}^{r,s} \sum_{F\in\F_k(P)} \int_F \int_{\nu(P,F)} {\bf 1}_\eta(x,u) x^r u^s\, 
\Ha^{n-k-1}(\D u)\,\Ha^k(\D x)
\end{equation}
with 
\begin{equation}\label{2.7a}
C_{n,k}^{r,s}:= (r!s!\omega_{n-k+s})^{-1}
\end{equation}
and with ${\bf 1}_\eta$ denoting the indicator function of the set $\eta$. This follows from a corresponding representation of the support measures, see \cite[(4.3)]{Sch14} and (\ref{2.2a}).

The local Minkowski tensors of polytopes can be generalized while preserving their properties, except weak continuity. Let $L\subset \R^n$ be a linear subspace, let $\pi_L:\R^n\to L$ be the orthogonal projection and define $Q_L\in\T^2$ by
$$ Q_L(a,b):=\langle \pi_L a,\pi_L b\rangle,\qquad \mbox{for }a,b\in\R^n.$$
Then $Q_{\vartheta L}=\vartheta Q_L$ for $\vartheta\in{\rm O}(n)$.

For $P\in\Pn$ and $F\in\F_k(P)$, let $L(F)$ be the linear subspace parallel to ${\rm aff}\, F$ (the {\em direction space} of $F$). Then we define the {\em generalized local Minkowski tensor} 
\begin{equation}\label{2.6}  
\phi_k^{r,s,j}(P,\eta) := C_{n,k}^{r,s} \sum_{F\in\F_k(P)} Q_{L(F)}^{\hspace*{1pt}j}\int_F \int_{\nu(P,F)} {\bf 1}_\eta(x,u) x^r u^s\, \Ha^{n-k-1}(\D u)\,\Ha^k(\D x),
\end{equation}
for $r,s,j,k\in\N_0$ with $1\le k\le n-1$. This definition is supplemented by $\phi_0^{r,s,0}:=\phi_0^{r,s}$, but $\phi_0^{r,s,j}$ will 
remain undefined for $j\ge 1$. 
Each mapping $\Gamma=  \phi_k^{r,s,j}$ has the following properties. It is isometry covariant and locally defined. For each $P\in\Pn$, $\Gamma(P,\cdot)$ is a $\T^p$-valued measure. For each $\eta\in\B(\Sigma^n)$, $\Gamma(\cdot,\eta)$ is a valuation. This was stated without proof in \cite{Sch12}. We shall provide a proof in the next section (Theorem \ref{T3.3}).

Not all of the listed properties are needed for a characterization.

\begin{theorem}\label{Theorem2.2}
For $p\in\N_0$, let $T_p(\Pn)$ denote the real vector space of all mappings $\Gamma:\Pn\times\B(\Sigma^n)\to\T^p$ with the following properties.\\[1mm]
$\rm (a)$ $\Gamma(P,\cdot)$ is a $\T^p$-valued measure, for each $P\in\Pn$;\\[1mm]
$\rm (b)$ $\Gamma$ is isometry covariant;\\[1mm]
$\rm (c)$ $\Gamma$ is locally defined.\\[1mm]
Then a basis of $T_p(\Pn)$ is given by the mappings $Q^m\phi^{r,s,j}_k$, where $m,r,s,j\in\N_0$ satisfy $2m+2j+r+s=p$ and where $k\in\{0,\dots,n-1\}$, but $j=0$ if $k\in\{0,n-1\}$.
\end{theorem}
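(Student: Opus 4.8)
The plan is to prove that the mappings $Q^m\phi^{r,s,j}_k$ (with the stated index constraints) span $T_p(\Pn)$ and are linearly independent, thereby forming a basis. Linear independence will be the easier part: I would exploit the fact that the local tensors live on $\Sigma^n$, so one can localize the argument to a single face of a single polytope and choose $\eta$ cleverly. First I would fix a polytope $P$ and a $k$-face $F$, and restrict attention to a small Borel set $\eta$ concentrated on the relative interior of $\{(x,u):x\in F,\, u\in\nu(P,F)\}$; on such a set only the summand indexed by that $k$ survives in every $\phi_k^{r,s,j}$. Then, assuming a vanishing linear combination $\sum c_{m,r,s,j,k}Q^m\phi^{r,s,j}_k=0$, I would integrate test functions against it, use that $x^r$, $u^s$, $Q^m$ and $Q_{L(F)}^j$ are tensor-valued factors that can be distinguished by evaluating on suitable tuples of vectors (separating those in $L(F)$ from those normal to it, and those in directions spanning the normal cone), and vary $F$ over faces of different dimensions and $P$ over polytopes with faces of prescribed direction spaces, to force all coefficients to vanish. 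The key separating observation is that for a $k$-face, $Q_{L(F)}$ contributes a rank-$2$ tensor supported on the $k$-dimensional direction space while $u^s$ is supported on the orthogonal complement, so the ranks $(2m+2j, r, s)$ and the dimension $k$ are all recoverable.

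For the spanning part, the strategy is to show that every $\Gamma\in T_p(\Pn)$ is determined by, and reconstructible from, finitely many "local data" at faces. Because $\Gamma$ is locally defined and $\Gamma(P,\cdot)$ is a measure, one first establishes that $\Gamma(P,\cdot)$ is concentrated on $\Nor P=\bigcup_k\bigcup_{F\in\F_k(P)}(F\times\nu(P,F))$, and that its restriction to the "piece" over a $k$-face $F$ depends only on $F$ and the normal cone $N(P,F)$ (this uses the local definition property applied to polytopes agreeing near $\relint F$). Isometry covariance then pins down the functional form: on the piece over $F$, after translating $F$ to contain the origin and rotating, the measure must be a $\T^p$-valued measure on $F\times\nu(P,F)$ that is covariant under the stabilizer of the flag $(L(F), N(P,F))$ in $\mathrm{O}(n)$, and translation covariance in directions parallel to $F$ forces polynomial dependence on $x$ (hence the $x^r$ factors). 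The remaining analysis is a representation-theoretic decomposition: the space of $\T^p$-valued, covariant measures on a product of a $k$-flat region and a spherical region in the orthogonal complement, invariant under $\mathrm{O}(k)\times(\text{rotations fixing }N(P,F))$, is spanned by products of powers of $Q$, powers of $Q_{L(F)}$, powers of $x$, and the measure $u^s\,\Ha^{n-k-1}$ integrated over $\nu(P,F)$ against $\Ha^k$ over $F$ — which is exactly $Q^m\phi^{r,s,j}_k$. The constraint $j=0$ when $k\in\{0,n-1\}$ arises because for $k=0$ the face is a point (no direction space), and for $k=n-1$ the normal cone is one-dimensional so $Q_{L(F)}=Q-u^2/\|u\|^2$ is not independent of the $u$-factors — one has to check this coincidence explicitly and absorb the apparent extra term into $Q^m\phi^{r,s}_{n-1}$ and $Q^{m-1}\phi^{r,s+2}_{n-1}$.

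The main obstacle I anticipate is the classification of covariant tensor-valued measures over a single face, i.e. the "representation theory" step. Writing $\Gamma$'s restriction over $F$ as a tensor-valued measure and decomposing it into irreducible covariant pieces requires knowing all $\mathrm{O}(k)\times\mathrm{O}(n-k)$-covariant maps landing in $\T^p$; the honest way is to invoke the classical fact that $\mathrm{O}(n)$-equivariant symmetric-tensor-valued functions of a finite set of vector and "flag" arguments are generated by the metric tensor, the projections $Q_L$, and symmetric products of the argument vectors (a consequence of the first fundamental theorem of invariant theory for the orthogonal group). Making this precise in the present setting — where one argument is a point $x\in F$ with polynomial (not merely covariant) dependence, another is integration over the spherical set $\nu(P,F)$, and one must handle the boundary behavior of the measure at lower-dimensional faces of $F$ to guarantee that only the "top" term over each face contributes — is where the real work lies, and where I would expect to lean most heavily on the detailed structure established in \cite{Sch12} for the polytopal case, together with a careful induction on the dimension $k$ of the face and on the rank $p$.
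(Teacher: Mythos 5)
Your outline is correct and, in essence, it is the paper's own route: concentration of $\Gamma(P,\cdot)$ on $\Nor P$ (Lemma \ref{Lemma3.3}), disjoint decomposition over relatively open faces, the observation that locality plus the translation argument turns the restriction over a $k$-face into Lebesgue measure on the face tensorized with a rotation covariant tensor measure $a(L,\omega)$ on $\Sn\cap L^\perp$, and then the classification of such $a(L,\omega)$ under the stabilizer of $L$ --- which, exactly as you propose, the paper does not re-derive but imports from \cite{Sch12} (Lemmas 3 and 4 there). Your linear-independence sketch is likewise the paper's Theorem \ref{Theorem3.1}: localize to a single $k$-dimensional polytope with $\eta=\beta\times\omega$, separate degrees of homogeneity in $x$, and then settle a polynomial identity; note that the real content there is ruling out relations among the $Q^mQ_L^j u^s$ with $2m+2j+s$ fixed (highest-power-of-$x_n$ arguments), and the $k=n-1$ degeneracy $Q_{L(F)}=Q-u_F^2$, which you correctly flag and which the paper disposes of via Lemma \ref{Lemma3.4}. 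The one organizational difference, and the point where your plan is vaguer than what it replaces, is the handling of translation covariance: you want to read off polynomial dependence on $x$ directly over each face, but covariance only gives $\Gamma(P+t,\eta+t)=\sum_{j}\Gamma_{p-j}(P,\eta)\,t^j/j!$, so any face-wise ``polynomial in $x$'' statement involves the companion mappings $\Gamma_{p-j}$, which must first be shown to inherit the measure, locality and covariance properties (the paper's Lemmas \ref{Lemma3.1} and \ref{Lemma3.2}). The paper then sidesteps your step by a two-stage scheme: first the translation invariant case (Theorem \ref{Theorem3.2}, producing only the $r=0$ valuations via the ``translation invariant signed measure is a multiple of Lebesgue measure'' argument), and then the $x^r$ factors by the Alesker-style induction, subtracting linear combinations of $Q^m\phi_k^{p-i,s,j}$ whose lowest translation companion matches $\Gamma_0$, then $\Gamma'_1$, and so on, using (\ref{n3.1}). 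If you make your direct route precise you will in effect be running that same induction, so no idea is missing, but as written this reduction is the step you still need to supply.
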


This is a stronger version of a theorem proved in \cite{Sch12}, including the linear independence result of Theorem \ref{Theorem3.1}. We shall explain the further modifications, in comparison to \cite{Sch12}, in the next section.

A similar theorem for $\Kn$ instead of $\Pn$ can hardly be expected without a continuity assumption. This raises the question whether the modified local Minkowski tensors $\phi^{r,s,j}_k$ with $k\ge 1$ and $j\ge 1$ have weakly continuous extensions from $\Pn$ to $\Kn$. The answer is easily seen to be positive for $k=n-1$, see Lemma \ref{Lemma3.4}. In Section \ref{sec4} we shall show that the answer is positive for $j=1$, and we shall suggest a representation of $\phi_k^{r,s,1}(K,\cdot)$ for general convex bodies $K$. For $j\ge 2$ and $1\le k\le n-2$ the answer is negative, in a stronger sense, which will allow us to prove in Section \ref{sec5} the following main result.

\begin{theorem}\label{Theorem2.3}
For $p\in\N_0$, let $T_p(\Kn)$ denote the real vector space of all mappings $\Gamma:\Kn\times\B(\Sigma^n)\to\T^p$ with the following properties.\\
$\rm (a)$ $\Gamma(K,\cdot)$ is a $\T^p$-valued measure, for each $K\in\Kn$;\\ 
$\rm (b)$ $\Gamma$ is isometry covariant;\\
$\rm (c)$ $\Gamma$ is locally defined;\\
$\rm (d)$ $\Gamma$ is weakly continuous.\\
Then a basis of $T_p(\Kn)$ is given by the mappings $Q^m\phi^{r,s,j}_k$, where $m,r,s\in\N_0$ and $j\in\{0,1\}$ satisfy $2m+2j+r+s=p$ and where $k\in\{0,\dots,n-1\}$, but $j=0$ if $k\in\{0,n-1\}$.
\end{theorem}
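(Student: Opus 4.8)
The plan is to derive Theorem~\ref{Theorem2.3} from the polytopal classification in Theorem~\ref{Theorem2.2} together with the extension results announced for Section~\ref{sec4} and the nonextendability results of Section~\ref{sec5}. First I would observe that the restriction map $\Gamma\mapsto\Gamma|_{\Pn\times\B(\Sigma^n)}$ sends $T_p(\Kn)$ into $T_p(\Pn)$: properties (a), (b), (c) are inherited by restriction to polytopes, so every weakly continuous, isometry covariant, locally defined tensor measure on $\Kn$ restricts to an element of the space classified in Theorem~\ref{Theorem2.2}. Hence any $\Gamma\in T_p(\Kn)$ restricts to a linear combination
$$
\Gamma|_{\Pn}=\sum_{m,r,s,j,k} a_{m,r,s,j,k}\,Q^m\phi^{r,s,j}_k
$$
with the index constraints $2m+2j+r+s=p$, $1\le k\le n-1$ (or $k=0$ with $j=0$), and $j=0$ when $k\in\{0,n-1\}$.

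\textbf{Key steps.} The heart of the argument is then to show that the coefficient of $Q^m\phi^{r,s,j}_k$ must vanish whenever $j\ge 2$ (and $1\le k\le n-2$). For this I would use the negative result promised for Section~\ref{sec5}: if a nontrivial linear combination of the $Q^m\phi^{r,s,j}_k$ with some $j\ge 2$ admitted any weakly continuous extension to $\Kn$ (even just as a tensor-measure-valued map that is locally defined and isometry covariant), one reaches a contradiction. The mechanism, as indicated in the introduction, is to approximate a highly symmetric convex body by polytopes with controlled symmetries --- polytopes obtained by lifting a lattice-defined polytopal complex in $\R^{n-1}$ onto a paraboloid of revolution --- and to evaluate both sides against a suitable test function $f$ on $\Sigma^n$. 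On the polytopal approximants the $Q_{L(F)}^j$ factors, for the $(n-1)$-dimensional facets, converge to $Q^j$ (tangential to the sphere) but with a combinatorial weight that does \emph{not} match the weak limit one gets from the extendable terms; the key is that the $j\ge 2$ terms produce, in the limit, a contribution scaling differently (by a factor governed by the second fundamental form / curvature of the paraboloid) from what weak continuity would force. By choosing the lattice and the body with enough rotational symmetry one can isolate the offending coefficient via a symmetrization (averaging over the rotation group), reducing to a scalar identity that fails unless the coefficient is zero. One then peels off these terms one value of $j$ at a time, or uses a grading by the power of $Q$ contracted against a generic vector, to conclude $a_{m,r,s,j,k}=0$ for all $j\ge 2$.

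\textbf{Assembling the proof.} Once the $j\ge 2$ coefficients are eliminated, $\Gamma|_{\Pn}$ is a linear combination of the $Q^m\phi^{r,s,j}_k$ with $j\in\{0,1\}$ only. By the extension results of Section~\ref{sec4} (and Lemma~\ref{Lemma3.4} for $k=n-1$), each such $Q^m\phi^{r,s,1}_k$ has a weakly continuous extension to $\Kn$, explicitly realized via the tensor-valued rotation-covariant differential form integrated over the normal cycle; call the extension $\widehat{Q^m\phi^{r,s,1}_k}$, and for $j=0$ the local Minkowski tensors $Q^m\phi^{r,s}_k$ are already defined on all of $\Kn$. Form the candidate
$$
\widehat\Gamma=\sum a_{m,r,s,1,k}\,\widehat{Q^m\phi^{r,s,1}_k}+\sum a_{m,r,s,0,k}\,Q^m\phi^{r,s}_k.
$$
Then $\Gamma-\widehat\Gamma$ is an element of $T_p(\Kn)$ vanishing on $\Pn$. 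Since polytopes are dense in $\Kn$ and both $\Gamma$ and $\widehat\Gamma$ are weakly continuous, the difference vanishes identically: for any $K\in\Kn$ take polytopes $P_i\to K$, and weak continuity gives $\int f\,\D(\Gamma-\widehat\Gamma)(K,\cdot)=\lim_i\int f\,\D(\Gamma-\widehat\Gamma)(P_i,\cdot)=0$ for every continuous $f$, hence $(\Gamma-\widehat\Gamma)(K,\cdot)=0$ as a $\T^p$-valued measure. Thus $\Gamma=\widehat\Gamma$ lies in the span of the asserted generators. Finally, linear independence of these generators on $\Kn$ follows from their linear independence on $\Pn$ (which is part of Theorem~\ref{Theorem2.2}, i.e.\ Theorem~\ref{Theorem3.1}): a vanishing linear combination on $\Kn$ restricts to a vanishing combination on $\Pn$, forcing all coefficients to be zero. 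This yields the claimed basis.

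\textbf{Main obstacle.} The routine parts are the restriction and the density/weak-continuity gluing argument; the genuine difficulty is the nonextendability step, i.e.\ proving that no weakly continuous extension exists once any $j\ge 2$ term is present. This requires the delicate polytopal approximation on the paraboloid with prescribed symmetry, careful bookkeeping of the $Q_{L(F)}$ weights over faces of all dimensions $k$, and a separate, more intricate construction in dimension $n=3$ (where fewer symmetries are available); controlling the limit precisely enough to detect the mismatch --- rather than merely showing some limit exists --- is where the real work lies.
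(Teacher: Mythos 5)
Your overall architecture does coincide with the paper's: restrict to polytopes and invoke Theorem \ref{Theorem2.2}, eliminate the terms with $j\ge 2$ by approximating a body of revolution with polytopes obtained by lifting a lattice complex to a paraboloid, extend the $j=1$ terms via the normal-cycle construction of Section \ref{sec4} (and Lemma \ref{Lemma3.4} for $k=n-1$), take linear independence from Theorem \ref{Theorem3.1}, and glue by weak continuity; your density argument for $\Gamma=\widehat\Gamma$ is a legitimate, slightly more direct version of the paper's assembly. The genuine gap is in the key step. The mechanism you describe for killing the $j\ge 2$ coefficients --- symmetrize over the rotation group and ``reduce to a scalar identity that fails unless the coefficient is zero'', peeling off one $j$ at a time --- is exactly what the paper points out does \emph{not} work: there are nontrivial combinations $\sum_{j\ge2}c_j\,Q^{q-j}\sum_i Q_{L_i}^j$ (sums over coordinate subspaces) whose associated polynomial \emph{is} rotation invariant; the paper gives the explicit example with $2c_2+3c_3=0$, $c_2,c_3\neq0$. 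So rotation invariance of the limit alone does not force coefficientwise vanishing. The actual argument must show that the specific tensor $\Upsilon=\sum_{j=2}^d c_jQ^{q-j}\sum_iQ_{L_i}^j$ built from the lattice directions of the approximating polytopes is not invariant under rotations fixing $e_n$, and this is achieved by extracting the coefficient of the top power $\lambda^{2d}$ (and $\mu^{2d-2}$), with a case distinction between $d$ even and $d$ odd, a separate treatment of the exceptional case $2k=n-1$ (where the whole construction is redone one dimension lower), and, for $n=3$, a different complex of triangles with angle $\pi/d$ together with the Minkowski averages $P^d_{h,t}=\frac1d\sum_{l}\vartheta_d^l P_{h,t}$ to force $d$-fold symmetry of the polytopes. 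None of this is recoverable from the heuristic you give (a ``second fundamental form'' scaling mismatch), and without it the contradiction you rely on is not established.

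A second, smaller omission: you apply the elimination argument to the full restriction $\Gamma|_{\Pn}$, which contains translation-covariant terms with $r\ge1$. The paper's Lemma \ref{Lemma5.1} and the subsequent estimates are set up only for the translation invariant, degree-$k$-homogeneous case ($r=0$), reached beforehand via Lemma \ref{Lemma3.5} and the Alesker-style peeling of translation degrees (Lemmas \ref{Lemma3.1}, \ref{Lemma3.2}, and the derivation of Theorem \ref{Theorem2.3} from Theorem \ref{Theorem5.1}, which uses weak continuity to extend (\ref{n3.1}) for $j\in\{0,1\}$ to $\Kn$). With $x^r$ factors present, the evaluation $\Gamma(P_{h,t},f)(E)$ against a test function of $u$ no longer collapses to the quantities $W_k(P_{h,t},f)$, so you either need that reduction first (after which your proof becomes the paper's) or a genuinely new estimate; as written, this step is missing.
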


\section{Reductions and the polytopal case}\label{sec3}

We assume that $\Gamma:\Kn\times\B(\Sigma^n)\to\T^p$ is a mapping which has the following properties.\\
$\bullet$ For each $K\in\Kn$, $\Gamma(K,\cdot)$ is a $\T^p$-valued measure;\\
$\bullet$ $\Gamma$ is isometry covariant;\\
$\bullet$ $\Gamma$ is locally defined.\\
Here $\Kn$ may be replaced by $\Pn$.

It is our goal to classify the mappings $\Gamma$ with these and possibly a continuity property. As a preparation, 
in the present section we establish several auxiliary results and provide reduction steps in order to deduce the general results from some simpler classification problems.

If $\Gamma$ is translation covariant of degree $q$, then there are mappings $\Gamma_{p-j}:\Kn\times\B(\Sigma^n)\to\T^{p-j}$, $j=0,\dots,q$, (possibly zero for some $j$ and with $\Gamma_p=\Gamma$) such that
$$ \Gamma(K+t,\eta+t)= \sum_{j=0}^q \Gamma_{p-j}(K,\eta)\frac{t^j}{j!}$$
for all $K\in \Kn$, $\eta\in \B(\Sigma^n)$ and $t\in\R^n$. The following lemma extends an observation of McMullen \cite[Thm.~2.3]{McM97}. 
 
\begin{lemma}\label{Lemma3.1}
If $\Gamma$ is translation covariant of degree $q\le p$, then the mappings $\Gamma_{p-j}$ satisfy
$$ \Gamma_{p-j}(K+t,\eta+t) =\sum_{r=0}^{q-j} \Gamma_{p-j-r}(K,\eta)\frac{t^r}{r!}$$
for $j=0,\dots,q$ and all $K\in \Kn$, $\eta\in \B(\Sigma^n)$ and $t\in\R^n$, in particular (case $j=q$),
$$ \Gamma_{p-q}(K+t,\eta+t) = \Gamma_{p-q}(K,\eta).$$
\end{lemma}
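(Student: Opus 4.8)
The plan is to prove the formula by induction on $q-j$, working downward from $j=q$ to $j=0$, using the fundamental relation
$$ \Gamma(K+t,\eta+t)= \sum_{j=0}^q \Gamma_{p-j}(K,\eta)\frac{t^j}{j!}$$
together with the group property of translations. First I would observe that the identity defining translation covariance of degree $q$ can be applied with $K$ replaced by $K+t$ and $\eta$ replaced by $\eta+t$, and with a second translation vector $s\in\R^n$; since $(K+t)+s=K+(t+s)$ and likewise for $\eta$, expanding both sides gives
$$ \sum_{j=0}^q \Gamma_{p-j}(K,\eta)\frac{(t+s)^j}{j!} = \sum_{j=0}^q \Gamma_{p-j}(K+t,\eta+t)\frac{s^j}{j!}.$$
On the left, I would expand $(t+s)^j = \sum_{r=0}^{j}\binom{j}{r} t^{j-r} s^r$ by the binomial theorem for the symmetric tensor product (this holds because $\odot$ is commutative and associative), and then collect terms according to the power $s^r$.

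Comparing coefficients of $s^r$ on both sides — which is legitimate since the identity holds for all $s\in\R^n$ and a symmetric tensor polynomial in $s$ that vanishes identically has vanishing homogeneous components (a standard polynomial-identity argument, e.g.\ by differentiation or by evaluating on sufficiently many points) — yields, for each fixed $r\in\{0,\dots,q\}$,
$$ \Gamma_{p-r}(K+t,\eta+t)\frac{1}{r!} = \sum_{j=r}^q \Gamma_{p-j}(K,\eta)\frac{t^{j-r}}{(j-r)!\,r!}.$$
Multiplying by $r!$ and reindexing the sum via $m=j-r$ gives exactly
$$ \Gamma_{p-r}(K+t,\eta+t) = \sum_{m=0}^{q-r}\Gamma_{p-r-m}(K,\eta)\frac{t^m}{m!},$$
which is the assertion (after renaming $r$ to $j$ and $m$ to $r$). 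The case $j=q$ is the special case in which the sum has only the term $m=0$, giving the stated translation invariance of $\Gamma_{p-q}$.

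The main technical point — and the only place requiring a little care — is the justification that one may compare homogeneous components in the tensor-valued polynomial identity in $s$: one must know that $\R^n$ is infinite (so $n\ge 2$ suffices, as assumed throughout) and invoke that a $\T^p$-valued polynomial map $s\mapsto\sum_k A_k(s^k)$ vanishing for all $s$ forces each $A_k=0$; this follows by repeatedly applying the scalar substitution $s\mapsto\lambda s$ and comparing powers of $\lambda\in\R$, or by differentiating. No induction is actually needed once the two-parameter substitution trick is in place, since all the relations fall out simultaneously from a single coefficient comparison; the result is therefore essentially a formal consequence of the cocycle-type identity $\Gamma((K+t)+s,(\eta+t)+s)=\Gamma(K+(t+s),\eta+(t+s))$. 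I do not anticipate a genuine obstacle here — the argument is the tensor-valued analogue of McMullen's computation for scalar valuations, with $\B(\Sigma^n)$ carried along passively.
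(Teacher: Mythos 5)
Your argument is correct and is essentially the paper's own proof: both expand $\Gamma(K+t+s,\eta+t+s)$ in two ways, apply the binomial theorem for the symmetric product to $(t+s)^i$, and compare coefficients of the powers of $s$, with the comparison justified by a scalar-scaling (Vandermonde-type) argument. The only point worth noting is that after isolating the homogeneous components one still cancels the common factor $s^r$, which the paper does by invoking that the symmetric tensor algebra has no zero divisors — the same fact implicit in your final step.
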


\begin{proof}
For $s,t\in\R^n$ we have
\begin{eqnarray*}
& & \sum_{j=0}^q \Gamma_{p-j}(K+t,\eta+t)\frac{s^j}{j!} = \Gamma(K+t+s,\eta+t+s) = \sum_{i=0}^q \Gamma_{p-i}(K,\eta)\frac{(t+s)^i}{i!}\\
& & = \sum_{i=0}^q \Gamma_{p-i}(K,\eta) \sum_{j=0}^i\frac{t^{i-j}}{(i-j)!} \frac{s^j}{j!} = \sum_{j=0}^q\left(\sum_{i=j}^q\Gamma_{p-i}(K,\eta)\frac{t^{i-j}}{(i-j)!}\right)\frac{s^j}{j!}.
\end{eqnarray*}
It follows that
$$ \Gamma_{p-j}(K+t,\eta+t)=\sum_{i=j}^q \Gamma_{p-i}(K,\eta)\frac{t^{i-j}}{(i-j)!} = \sum_{r=0}^{q-j} \Gamma_{p-j-r}(K,\eta) \frac{t^r}{r!}.$$
Here we have used the subsequent lemma, together with the fact that the symmetric tensor algebra has no zero divisors.
\end{proof}

In order to derive properties of $\Gamma_{p-j}$ from those of $\Gamma$, the following lemma is useful. It is simpler than \cite[Lem.~1]{Sch12}, which was also used for that purpose.

\begin{lemma}\label{Lemma3.2}
Let $\Gamma$ be translation covariant of degree $q\le p$. Then there are constants $a_{jm}$ $($$j=0,\dots,q$, $m=1,\dots,q+1$$)$, depending only on $q,j,m$, such that
\begin{equation}\label{3} 
\Gamma_{p-j}(K,\eta)\frac{t^j}{j!} =\sum_{m=1}^{q+1} a_{jm}\Gamma(K+mt,\eta+mt)
\end{equation}
for all $K\in \Kn$, $\eta\in \B(\Sigma^n)$ and $t\in\R^n$.
\end{lemma}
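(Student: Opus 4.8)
The plan is to exploit the polynomial expansion in \eqref{1}: for fixed $K$, $\eta$, $t$, the $\T^p$-valued function
$$ m\mapsto \Gamma(K+mt,\eta+mt) = \sum_{j=0}^q \Gamma_{p-j}(K,\eta)\,\frac{(mt)^j}{j!} = \sum_{j=0}^q \left(\Gamma_{p-j}(K,\eta)\frac{t^j}{j!}\right) m^j $$
is a polynomial in the scalar variable $m$ of degree at most $q$, whose coefficients are precisely the tensors $\Gamma_{p-j}(K,\eta)\,t^j/j!$ that we wish to isolate. Evaluating at the $q+1$ integer values $m=1,\dots,q+1$ yields a linear system whose coefficient matrix is the Vandermonde matrix $(m^j)_{m=1,\dots,q+1;\,j=0,\dots,q}$. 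Since the nodes $1,\dots,q+1$ are distinct, this matrix is invertible over $\R$, and its inverse has entries $a_{jm}$ that depend only on $q$ (hence only on $q,j,m$). Multiplying the vector $(\Gamma(K+mt,\eta+mt))_{m=1,\dots,q+1}$ by this inverse recovers each coefficient, which is exactly the asserted identity \eqref{3}.

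Concretely, I would first note that \eqref{1} gives, with $t$ replaced by $mt$ and using $(mt)^j = m^j t^j$ (valid in the symmetric tensor algebra since $m$ is a scalar),
$$ \Gamma(K+mt,\eta+mt) = \sum_{j=0}^q m^j\,\Gamma_{p-j}(K,\eta)\frac{t^j}{j!},\qquad m=1,\dots,q+1. $$
Viewing this as $b_m = \sum_{j=0}^q V_{mj} c_j$ with $V_{mj}=m^j$ and $c_j = \Gamma_{p-j}(K,\eta)t^j/j!$, I invoke the invertibility of the $(q+1)\times(q+1)$ Vandermonde matrix $V$ on distinct nodes $1,\dots,q+1$ to write $V^{-1}=(a_{jm})$, so that $c_j = \sum_{m=1}^{q+1} a_{jm} b_m$, which is \eqref{3}. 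The constants $a_{jm}$ are the entries of the inverse of a fixed numerical matrix determined by $q$, hence depend only on $q$, $j$, $m$ as claimed, and in particular are independent of $K$, $\eta$, $t$, and of the rank $p$.

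There is essentially no obstacle here; the only point requiring a word is that the identity \eqref{1} is a polynomial identity in a vector variable $t$, and one must be sure that substituting the scaled vector $mt$ and collecting powers is legitimate — this is immediate because the symmetric tensor powers are homogeneous, $(\lambda x)^j = \lambda^j x^j$ for scalars $\lambda$, so along the ray $\R t$ the expansion genuinely becomes a univariate polynomial in the scalar $m$ with tensor coefficients, and one-variable Vandermonde interpolation applies coordinatewise in any basis of $\T^p$. This is also where the companion fact quoted in the proof of Lemma \ref{Lemma3.1}, that the symmetric tensor algebra has no zero divisors, is not even needed: the present argument only uses the vector-space structure of $\T^p$ and scalar linear algebra.
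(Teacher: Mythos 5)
Your proof is correct and is essentially the paper's own argument: both substitute $\lambda t$ (equivalently $mt$) into the translation covariance identity, evaluate at the nodes $m=1,\dots,q+1$, and invert the resulting Vandermonde system to isolate the tensor coefficients $\Gamma_{p-j}(K,\eta)t^j/j!$, with the constants $a_{jm}$ depending only on $q,j,m$. The only cosmetic difference is that the paper phrases the inversion coordinatewise while you phrase it directly for the $\T^p$-valued vector; your closing remark that no zero-divisor property of the symmetric tensor algebra is needed here is also consistent with the paper.
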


\begin{proof} For fixed $K,\eta,t$, let $f(\lambda)$, for $\lambda\in\R$, be a coordinate of $\Gamma(K+\lambda t,\eta+\lambda t)$ with respect to some given basis, and let $f_j$ be the corresponding coordinate of $\Gamma_{p-j}(K,\eta) t^j/j!$. (The following argument is similar to one used in \cite[p.~213]{Sch14}.) In the equation $f(\lambda)=\sum_{j=0}^q \lambda^j f_j$, which holds by (\ref{1}), we insert for $\lambda$ the values $1,\dots,q+1$. The resulting system of linear equations for $f_0,\dots,f_q$ has a (Vandermonde) determinant different from zero, hence there is a solution of the form $f_j=\sum_{m=1}^{q+1} a_{jm}f(m)$, $j=0,\dots,q$, with certain constants $a_{jm}$, depending only on $q,j,m$. Since this holds for all coordinates, equation (\ref{3}) results. 
\end{proof}

A typical application is as follows. From the isometry covariance of $\Gamma$ we get, for $\vartheta\in{\rm O}(n)$,
\begin{eqnarray*}
\Gamma_{p-j}(\vartheta K,\vartheta\eta)\frac{(\vartheta t)^j}{j!} &= &\sum_{m=1}^{q+1} a_{jm} \Gamma(\vartheta(K+mt),\vartheta(\eta+mt))\\
& = &\vartheta\left( \sum_{m=1}^{q+1} a_{jm} \Gamma(K+mt,\eta+mt)\right) = \vartheta\left( \Gamma_{p-j}(K,\eta) \frac{t^j}{j!}\right)\\
&=& \vartheta \Gamma_{p-j}(K,\eta)\frac{(\vartheta t)^j}{j!}.
\end{eqnarray*}
Since the symmetric tensor algebra has no zero divisors, it follows that
$$ \Gamma_{p-j}(\vartheta K,\vartheta\eta) = \vartheta\Gamma_{p-j}(K,\eta).$$
Together with Lemma \ref{Lemma3.1}, this shows that $\Gamma_{p-j}$ is isometry covariant.

In a similar way, one shows that $\Gamma_{p-j}(K,\cdot)$ is a $\T^{p-j}$-valued measure, for each $K\in\Kn$. Further, if $\Gamma$ is weakly continuous or is a valuation in its first argument, then each $\Gamma_{p-j}$ has the corresponding property.

The following lemma extends an argument from \cite[p.~124]{Sch78}. A similar argument (in a simpler situation) appears in \cite{Gla97} (proof of Lemma 1.3).

\begin{lemma}\label{Lemma3.3}
For each $K\in \Kn$, the measure $\Gamma(K,\cdot)$ is concentrated on ${\rm Nor}\,K$.
\end{lemma}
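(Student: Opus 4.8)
The goal is to show that $\Gamma(K,\cdot)$ vanishes on every Borel set $\eta\in\B(\Sigma^n)$ with $\eta\cap\Nor K=\emptyset$; since $\Gamma(K,\cdot)$ is a ($\T^p$-valued, hence finite signed) measure, it then follows that it is concentrated on $\Nor K$. First I would reduce, using Lemma~\ref{Lemma3.2} and the remark following it, to the case where $\Gamma$ is translation covariant of \emph{degree zero}, i.e.\ translation invariant: indeed the components $\Gamma_{p-j}$ of a translation covariant $\Gamma$ are again locally defined, isometry covariant measures, and $\Gamma=\Gamma_p$ is recovered from them; moreover a set disjoint from $\Nor K$ stays disjoint from $\Nor(K+t)$ after translating both by $t$, so it suffices to treat each $\Gamma_{p-j}$, and $\Gamma_{p-q}$ is translation invariant while the chain in Lemma~\ref{Lemma3.1} then propagates the conclusion back up. Actually, a cleaner route avoids even this reduction and works directly, so let me describe that.

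\textbf{Key steps.} Fix $K\in\Kn$ and a compact set $\eta_0\subset\Sigma^n$ with $\eta_0\cap\Nor K=\emptyset$; by inner regularity of the total variation measure $|\Gamma(K,\cdot)|$ it is enough to show $\Gamma(K,\eta_0)=0$ for all such compact $\eta_0$. Because $\eta_0$ is compact and disjoint from the closed set $\Nor K$, there is an $\varepsilon>0$ such that every support element $(x,u)\in\eta_0$ satisfies $\mathrm{dist}((x,u),\Nor K)>\varepsilon$; in particular, for each $(x,u)\in\eta_0$ the ray-type condition fails, meaning $x$ is not a boundary point of $K$ admitting $u$ as outer normal. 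The crucial geometric observation is that one can then modify $K$ near $\eta_0$ so that the modified body $K'$ agrees with $K$ in a neighbourhood of $\Nor K\cap(\text{arbitrary fixed region})$ but has $\eta_0\cap\Nor K'=\emptyset$ as well, while the ``locally defined'' axiom already gives $\Gamma(K,\eta_0)=\Gamma(K',\eta_0)$ whenever $\eta_0\cap\Nor K=\eta_0\cap\Nor K'$ — in our situation both intersections are empty, so $\Gamma(K,\eta_0)=\Gamma(K',\eta_0)$ for \emph{every} $K'\in\Kn$ with $\eta_0\cap\Nor K'=\emptyset$. Now choose $K'$ to be a very large ball $rB^n+c$ containing $K$ in its interior relative to the spatial projection and with outer normals far from those prescribed by $\eta_0$; more to the point, pick two convex bodies $K_1,K_2$, each disjoint (in the $\Nor$-sense) from $\eta_0$, with $K_1\cup K_2,K_1\cap K_2\in\Kn$ and also $\eta_0\cap\Nor(K_1\cup K_2)=\eta_0\cap\Nor(K_1\cap K_2)=\emptyset$ — e.g.\ two large overlapping balls. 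If $\Gamma$ is additionally known to be a valuation this forces $\Gamma(\cdot,\eta_0)$ to take a single constant value $c(\eta_0)$ on all bodies dodging $\eta_0$; but it does not yet give $c(\eta_0)=0$.

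\textbf{Getting the value to vanish.} The value is pinned down by exploiting homogeneity/scaling together with translation covariance, following the cited arguments in \cite[p.~124]{Sch78} and \cite{Gla97}. Take any body $K'$ with $\eta_0\cap\Nor K'=\emptyset$ whose ``$\eta_0$-relevant'' region is empty in a scale-invariant way: for instance note that for a ball $B=rB^n$, $\Nor B=\{(x,x/r):\|x\|=r\}$, and by choosing $r$ in a suitable range one can arrange $\eta_0\cap\Nor B=\emptyset$ for a whole interval of radii $r$; since $\Gamma(B,\eta_0)$ is then independent of $r$ on that interval, while isometry covariance (rotation covariance plus translation covariance of degree $\le p$) forces $\Gamma(rB^n,\eta_0)$ to be polynomial in $r$ of a controlled degree with coefficients that are fixed $\T$-valued, being constant on an interval makes it constant in $r$ altogether, and then comparing its rotation-covariance transformation law against the fact that $\eta_0$ is not $\mathrm{O}(n)$-invariant forces the relevant tensor coefficients to vanish. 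The genuinely hard part, and the one I would spend the most care on, is the \emph{existence of the comparison bodies}: producing, for a given compact $\eta_0$ disjoint (in the normal-bundle sense) from a given $K$, a family of convex bodies realizing the prescribed empty-intersection condition and simultaneously compatible with the valuation identity (so that intersections and unions also dodge $\eta_0$) and with a scaling or rotation that is not symmetric under the stabilizer of $\eta_0$ — this is exactly the point where \cite{Sch78,Gla97} had to do real geometric work, and I expect the proof in the paper to construct such bodies explicitly (large balls, halfspaces intersected with balls, or slabs) and verify the normal-bundle conditions by elementary convex geometry.
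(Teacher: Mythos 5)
You have the right opening move: the ``locally defined'' axiom shows that $\Gamma(K,\eta_0)=\Gamma(K',\eta_0)$ for \emph{any} two bodies whose normal bundles both miss $\eta_0$, so the value depends only on $\eta_0$. But the part of your argument that is supposed to force this common value to vanish has a genuine gap, and it is exactly the part the paper does differently. First, the valuation property is not among the hypotheses under which Lemma \ref{Lemma3.3} is proved (the standing assumptions in Section \ref{sec3} are only: measure-valued, isometry covariant, locally defined), so the union/intersection trick with two overlapping balls is not available. Second, your claim that isometry covariance forces $\Gamma(rB^n,\eta_0)$ to be polynomial in the radius $r$ is false: translation covariance controls the behaviour under translations $t$, not under dilations, and no continuity or homogeneity is assumed here, so the dependence on $r$ is a priori arbitrary; likewise the appeal to ``$\eta_0$ is not $\mathrm{O}(n)$-invariant'' is not an argument that any tensor coefficient vanishes. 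So the ``getting the value to vanish'' step does not go through as written, and the ``hard part'' you defer (constructing comparison bodies compatible with a valuation identity) is not what is needed.

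The paper's mechanism is different and simpler in its choice of comparison bodies, but uses translation covariance in an essential quantitative way. For a \emph{bounded} Borel set $\eta\in\B_b(\Sigma^n)$ one can take the comparison body to be a singleton $\{x\}$ with $x$ so far away that $\eta\cap\Nor\{x\}=\emptyset$ (here boundedness of $\eta$ in the spatial coordinate is what makes this possible; the reduction from arbitrary Borel sets to bounded ones is a routine $\sigma$-additivity/exhaustion step at the end, not an inner-regularity reduction to compact sets). By the locally defined property, $F(\eta):=\Gamma(\{x\},\eta)$ is independent of the admissible $x$, and is a $\T^p$-valued measure on the ring $\B_b(\Sigma^n)$. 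Now expand $F(\eta+t)=\Gamma(\{x\}+t,\eta+t)=\sum_{j}\Gamma_{p-j}(\{x\},\eta)\,t^j/j!$ using translation covariance. The lowest-order coefficient $F_0(\eta):=\Gamma_0(\{x\},\eta)$ is translation invariant (Lemma \ref{Lemma3.1}), so for fixed $\omega\in\B(\Sn)$ the set function $\beta\mapsto F_0(\beta\times\omega)$ is a translation invariant signed measure on $\B_b(\R^n)$, hence a constant multiple of Lebesgue measure; a nonzero multiple would assign infinite value to a halfspace, contradicting that $\Gamma_0(\{x\},\cdot)$ is a (finite) measure, so $F_0=0$. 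Repeating this argument degree by degree kills $F_1$, then $F_2$, and after finitely many steps $F\equiv 0$. Finally, for general $K$ and bounded $\eta$ with $\eta\cap\Nor K=\emptyset$, choose $x$ with $\eta\cap\Nor\{x\}=\emptyset$; the locally defined property gives $\Gamma(K,\eta)=\Gamma(\{x\},\eta)=0$. Your proposal is missing precisely this combination of singleton comparison bodies, the degree-by-degree use of translation covariance, and the classification of translation invariant measures on $\R^n$; without some substitute for it, the value $c(\eta_0)$ is never shown to be zero.
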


\begin{proof} Recall that $\B_b(\Sigma^n)$ is the ring of bounded Borel sets in $\Sigma^n$. Let $\eta\in \B_b(\Sigma^n)$. Then we can choose points $x_1,x_2\in\R^n$ with $\eta\cap{\rm Nor}\,\{x_i\}=\emptyset$ for $i=1,2$. Since $\Gamma$ is locally defined, we have $\Gamma(\{x_1\},\eta)= \Gamma(\{x_2\},\eta)$. Therefore, we can define
$$ F(\eta):= \Gamma(\{x\},\eta) \quad \mbox{for }\eta\in{\mathcal B}_b(\Sigma^n) \mbox{ and arbitrary $x$ with  } \eta\cap{\rm Nor}\,\{x\}=\emptyset.$$
Let $(\eta_i)_{i\in{\mathbb N}}$ be a disjoint sequence in ${\mathcal B}_b(\Sigma^n)$ such that $\bigcup_{i\in{\mathbb N}} \eta_i\in{\mathcal B}_b(\Sigma^n)$. Then we can choose $x$ with $\eta_i\cap{\rm Nor}\,\{x\}=\emptyset$ for all $i\in{\mathbb N}$ and deduce that 
$$ \sum_{i\in\N} F(\eta_i)= \sum_{i\in\N} \Gamma\left(\{x\},\eta_i\right)=\Gamma\left(\{x\},\bigcup_{i\in\N} \eta_i\right) = F\left(\bigcup_{i\in\N} \eta_i\right).$$ 
Thus, $F$ is a ${\mathbb T}^p$-valued measure on the ring ${\mathcal B}_b(\Sigma^n)$.

Let $\eta\in{\mathcal B}_b(\Sigma^n)$ and $t\in{\mathbb R}^n$. Choosing $x$ with $\eta\cap{\rm Nor}\,\{x\}=\emptyset$, we have $(\eta+t)\cap{\rm Nor}\,\{x+t\}=\emptyset$ and hence
$$ F(\eta+t)= \Gamma(\{x\}+t,\eta+t) = \sum_{j=0}^p \Gamma_{p-j}(\{x\},\eta)\frac{t^j}{j!}.$$
This is independent of $x$ (as long as $\eta\cap{\rm Nor}\,\{x\}=\emptyset$), hence we can define
$$ F_0(\eta):= \Gamma_0(\{x\},\eta).$$
From Lemma \ref{Lemma3.1} it follows that $F_0(\eta+t)=F_0(\eta)$. Since $\Gamma_{p-j}(\{x\},\cdot)$ is a ${\mathbb T}^{p-j}$-valued measure, $F_0$ is a real-valued signed measure on ${\mathcal B}_b(\Sigma^n)$. Let $\omega\in{\mathcal B}({\mathbb S}^{n-1})$ be fixed and define
$$ \mu(\beta):= F_0(\beta\times \omega)\qquad\mbox{for } \beta\in{\mathcal B}_b({\mathbb R}^n).$$
Then $\mu$ is a translation invariant finite signed measure on ${\mathcal B}_b({\mathbb R}^n)$ and hence a multiple of Lebesgue measure. Thus, there is a constant $c$ such that $\Gamma_0(\{x\}, \beta\times\omega)=c\,\Ha^n(\beta)$ for all bounded Borel sets $\beta$ with $x\notin\beta$, but since both sides are measures in $\beta$, the equality holds for arbitrary Borel sets $\beta$ with $x\notin\beta$. If $c\not=0$, then $\Gamma_0(\{x\}, \beta\times\omega)=\infty$ if, for instance, $\beta$ is a halfspace and $x$ a point not contained in it. This is a contradiction. From $F_0(\beta\times\omega)=0$ for all $\beta\in{\mathcal B}({\mathbb R}^n)$ and  $\omega\in{\mathcal B}({\mathbb S}^{n-1})$ it follows that $F_0(\eta)=0$ for all $\eta\in{\mathcal B}(\Sigma^n)$. Since $F_0=0$, we now have
$$ F(\eta+t)= \sum_{j=0}^{p-1} \Gamma_{p-j}(\{x\},\eta)\frac{t^j}{j!}$$
for $\eta\in{\mathcal B}_b(\Sigma^n)$, $x\in{\mathbb R}^n$ with $\eta\cap{\rm Nor}\{x\}=\emptyset$ and $t\in{\mathbb R}^n$.
Repeating the argument above and arguing as in the proof of Lemma \ref{Lemma3.1}, we obtain that $F_1(\eta):= \Gamma_1(\{x\},\eta)$ defines a translation invariant ${\mathbb T}^1$-valued measure $F_1$ on ${\mathcal B}_b(\Sigma^n)$. As above, we deduce that each coordinate of $F_1$ must be zero, and from $F_1=0$ we conclude that
$$ F(\eta+t)= \sum_{j=0}^{p-2} \Gamma_{p-j}(\{x\},\eta)\frac{t^j}{j!}.$$
The argument can now be repeated and after finitely many steps we arrive at $F=0$.

Now let $K\in{\mathcal K}^n$ and let $\eta\in{\mathcal B}_b(\Sigma^n)$ be a set with $\eta\cap {\rm Nor}\,K=\emptyset$. We can choose a point $x$ with $\eta\cap{\rm Nor}\,\{x\}=\emptyset$. Since $\Gamma$ is locally defined, it follows that
$$ \Gamma(K,\eta)=\Gamma(\{x\},\eta)=0.$$
From this, we can deduce that $\Gamma(K,\eta)=0$ for arbitrary sets $\eta\in{\mathcal B}(\Sigma^n)$ with $\eta\cap{\rm Nor}\,K=\emptyset$. This means that $\Gamma(K,\cdot)$ is concentrated on ${\rm Nor}\,K$.
\end{proof}

Now we turn to polytopes and remark first that all of the previous statements of this section remain true if $\Kn$ is replaced by $\Pn$.

We study the generalized local Minkowski tensors $\phi_k^{r,s,j}$ of polytopes defined by (\ref{2.6}). For $k=n-1$, we show that they yield nothing new, as they can be expressed as linear combinations of the mappings $Q^m\phi_{n-1}^{r,l}$.

\begin{lemma}\label{Lemma3.4}
\begin{equation}\label{5}
\phi_{n-1}^{r,s,j} = \sum_{i=0}^j (-1)^i\binom{j}{i}\frac{(s+2i)!\omega_{1+s+2i}}{s!\omega_{1+s}} Q^{j-i}\phi_{n-1}^{r,s+2i}.
\end{equation}
\end{lemma}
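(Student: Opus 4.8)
The plan is to work in the space $\R^n$ with a fixed face $F \in \F_{n-1}(P)$ of codimension one. For such a face, the normal cone $N(P,F)$ is one-dimensional, so $\nu(P,F)$ consists of a single outer unit normal vector, call it $u_F$; the essential point is that the $(n-1)$-dimensional direction space $L(F)$ is precisely the orthogonal complement of the line $\R u_F$. Consequently, for any $b \in \R^n$, writing $b = \pi_{L(F)} b + \langle b, u_F\rangle u_F$, one obtains the identity $Q_{L(F)}(a,b) = \langle a,b\rangle - \langle a,u_F\rangle\langle b,u_F\rangle$, i.e. as symmetric tensors $Q_{L(F)} = Q - u_F^2$. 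This is the algebraic heart of the matter: on a facet, the subspace tensor $Q_{L(F)}$ differs from the ambient metric tensor $Q$ only by the rank-one tensor $u_F^2$ built from the (single) normal direction.

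With this identity in hand, the strategy is to substitute $Q_{L(F)}^j = (Q - u_F^2)^j$ into the definition (\ref{2.6}) of $\phi_{n-1}^{r,s,j}$ and expand by the binomial theorem in the commutative symmetric tensor algebra:
$$
Q_{L(F)}^j = \sum_{i=0}^j (-1)^i \binom{j}{i} Q^{j-i} u_F^{2i}.
$$
Inserting this into (\ref{2.6}) for $k = n-1$, and noting that inside the integral over $\nu(P,F)$ the variable $u$ equals $u_F$ (the measure $\Ha^{0}$ on the singleton $\nu(P,F)$ has total mass one), we may replace $u_F^{2i}$ by $u^{2i}$ under the integral sign. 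This turns the $i$-th summand into $(-1)^i\binom{j}{i} Q^{j-i}$ times
$$
C_{n,n-1}^{r,s}\sum_{F\in\F_{n-1}(P)}\int_F\int_{\nu(P,F)} \mathbf{1}_\eta(x,u)\, x^r u^{s+2i}\,\Ha^{0}(\D u)\,\Ha^{n-1}(\D x),
$$
which is, up to the constant, exactly $\phi_{n-1}^{r,s+2i}(P,\eta)$ as given by (\ref{2.6a}). The remaining step is purely bookkeeping with the normalizing constants $C_{n,k}^{r,s} = (r!\,s!\,\omega_{n-k+s})^{-1}$ from (\ref{2.7a}): comparing the factor $C_{n,n-1}^{r,s} = (r!\,s!\,\omega_{1+s})^{-1}$ that was pulled out with the factor $C_{n,n-1}^{r,s+2i} = (r!\,(s+2i)!\,\omega_{1+s+2i})^{-1}$ that belongs to $\phi_{n-1}^{r,s+2i}$ produces precisely the correction factor $\frac{(s+2i)!\,\omega_{1+s+2i}}{s!\,\omega_{1+s}}$ appearing in (\ref{5}).

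I expect no genuine obstacle here; the lemma is essentially a direct computation once the geometric fact $Q_{L(F)} = Q - u_F^2$ for facets is isolated. The only points requiring a little care are: (i) the expansion of $(Q-u_F^2)^j$ is legitimate because $\T^\bullet$ with $\odot$ is a commutative algebra, so the binomial theorem applies verbatim; (ii) the replacement of the constant tensor $u_F^{2i}$ by $u^{2i}$ under the integral is valid because the inner integration is over the single point $u_F$, which is also where one must be slightly attentive to the normalization convention for $\Ha^0$ implicit in the definitions leading to (\ref{2.6a}); and (iii) one should confirm that $j=0$ is not excluded for $k=n-1$ in the definition (\ref{2.6}) so the left-hand side is well-defined for all $j\ge 0$, and that the case $j=0$ of (\ref{5}) reduces tautologically to $\phi_{n-1}^{r,s,0}=\phi_{n-1}^{r,s}$, consistent with the stated base case. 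Tracking the alternating signs and binomial coefficients through the expansion is the most error-prone part, but it is entirely mechanical.
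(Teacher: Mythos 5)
Your proposal is correct and is essentially the paper's own proof: the paper likewise observes that for a facet $F$ the normals are $\pm u_F$ with $Q_{L(F)}+u_F^2=Q$, substitutes $(Q-u_F^2)^j$ into the definition (\ref{2.6}) for $k=n-1$, and states that the assertion follows immediately. Your additional steps (binomial expansion in the symmetric tensor algebra, replacing $u_F^{2i}$ by $u^{2i}$ on the singleton $\nu(P,F)$, and the comparison of $C_{n,n-1}^{r,s}$ with $C_{n,n-1}^{r,s+2i}$ yielding the factor $\tfrac{(s+2i)!\,\omega_{1+s+2i}}{s!\,\omega_{1+s}}$) are exactly the routine details the paper leaves implicit, and your bookkeeping is accurate.
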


\begin{proof} Let $P\in{\mathcal P}^n$. For $F\in{\mathcal F}_{n-1}(P)$, let $\pm u_F$ be the two unit vectors orthogonal to $L(F)$. Then $Q_{L(F)}+u_F^2=Q$, hence
$$  \phi_{n-1}^{r,s,j}(P,\eta) = C_{n,n-1}^{r,s} \sum_{F\in{\mathcal F}_{n-1}(P)} \int_F \int_{\nu(P,F)}\left(Q-u_F^2\right)^j {\bf 1}_\eta(x,u) x^r u^s\, {\mathcal H}^0(\D u)\,{\mathcal H}^{n-1}(\D x).$$
This immediately gives the assertion.
\end{proof}

Whereas the global tensor functions $Q^m\Phi_k^{r,s}$ satisfy the non-trivial linear `McMullen relations', their local versions on polytopes, $Q^m\phi_k^{r,s}$ with $0\le k\le n-1$ and $Q^m\phi_k^{r,s,j}$ with $1\le k\le n-2$ and $j\ge 1$,  are linearly independent. This is the subject of the following theorem, which is part of Theorem \ref{Theorem2.2}. It is expected that this fact will later be useful for the derivation of integral-geometric relations.

\begin{theorem}\label{Theorem3.1}
Let $p\in{\mathbb N}_0$. On $\Pn$, the local tensor valuations $Q^m\phi_k^{r,s,j}$ with 
$$ m,r,s,j\in{\mathbb N}_0,\;2m+2j+r+s=p,\;k\in\{0,\dots,n-1\}, \mbox{ but }j=0 \mbox{ if } k\in\{0,n-1\},$$
are linearly independent.
\end{theorem}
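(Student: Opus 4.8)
The plan is to establish linear independence by testing a hypothetical linear relation against suitably chosen polytopes and functions $f$ on $\Sigma^n$, exploiting the fact that the summands $Q^m\phi_k^{r,s,j}$ live at different "levels" which can be separated one parameter at a time. Suppose
\[
\sum c_{m,r,s,j,k}\, Q^m\phi_k^{r,s,j} = 0
\]
with the stated index constraints. First I would fix the dimension $k$ of the faces that contribute: by choosing $P$ to be a polytope whose normal bundle, restricted to a small Borel set $\eta$, meets only a single face $F$ of dimension exactly $k$ (e.g. localize $\eta$ near an interior support element of a $k$-face), and using Lemma \ref{Lemma3.3} together with the explicit face-wise formula (\ref{2.6}), only the terms with that value of $k$ survive. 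This reduces the problem to fixed $k$.

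For fixed $k$ with $1\le k\le n-2$ (the cases $k=0$ and $k=n-1$ are degenerate and simpler, with $j=0$), the next step is to separate the parameters $j$ and $m$. The key observation is that $\phi_k^{r,s,j}(P,\eta)$ carries the tensor factor $Q_{L(F)}^{\,j}$ (depending on the \emph{position} of the $k$-face $F$) while $Q^m$ is a fixed tensor; by applying rotations to a single face (taking $P=\vartheta P_0$ for a one-parameter family $\vartheta$, or equivalently choosing several polytopes whose $k$-face $F$ has different direction spaces $L$), I obtain a system of tensor equations in which the coefficient of each $Q^m Q_L^{\,j}$ must vanish. Since $Q_L^{\,j}$ for distinct $j$ (and distinct subspaces $L$) are linearly independent as tensor-valued functions of $L$ — this follows from standard facts about the symmetric tensor algebra and the fact that $L\mapsto Q_L$ is a non-degenerate quadratic-type map — one concludes that for each pair $(j,m)$ the corresponding family of coefficients (over the remaining parameters $r,s$) vanishes.

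It then remains to show, for fixed $k,j,m$, that the maps $\phi_k^{r,s}$ with $r+s$ fixed (up to the $2m+2j$ shift) are linearly independent. Here I would integrate against test functions of the form $f(x,u)=g(x)h(u)$, localized so that $\eta$ meets one $k$-face $F$ with a fixed direction space; on that face, $\phi_k^{r,s}(P,f)$ becomes $C_{n,k}^{r,s}\int_F g(x)x^r\,\Ha^k(\D x)\int_{\nu(P,F)} h(u)u^s\,\Ha^{n-k-1}(\D u)$. Choosing $g$ so that the moments $\int_F g(x)x^r\,\Ha^k(\D x)$ separate different $r$ (polynomials on a $k$-dimensional face suffice, since $k\ge 1$), and $h$ so that $\int_{\nu(P,F)} h(u)u^s\,\Ha^{n-k-1}(\D u)$ separates different $s$ (here one needs $n-k-1\ge 1$, i.e. $k\le n-2$, so the normal cone slice is at least one-dimensional and carries nonconstant angular functions), forces all remaining coefficients to vanish.

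The main obstacle is the last separation step: one must verify that the "product" structure of (\ref{2.6a}) genuinely lets the spatial variable $x$ and the normal variable $u$ be decoupled, and that on a single face the pairing $\int u^s$ against angular test functions over a $(n-k-1)$-dimensional spherical region really does distinguish all admissible $s$. This is where the restrictions $j=0$ when $k\in\{0,n-1\}$ enter: for $k=n-1$ the normal "cone" is zero-dimensional (only $\pm u_F$), so $u^s$ collapses and Lemma \ref{Lemma3.4} already exhibits the $j\ge 1$ tensors as redundant; for $k=0$ the face is a point and $L(F)=\{0\}$, so $Q_{L(F)}=0$ and $\phi_0^{r,s,j}$ is undefined for $j\ge 1$. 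A careful choice of the approximating/test configuration — a polytope with a single $k$-face of prescribed direction, together with polynomially-growing test functions in $x$ and spherical-harmonic-type test functions in $u$ — handles all the bookkeeping, and the Vandermonde-type argument already used in Lemmas \ref{Lemma3.1} and \ref{Lemma3.2} (to isolate translation-degree components) reappears here to peel off homogeneity degrees in $x$.
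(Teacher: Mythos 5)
Your overall strategy (localize $\eta$ to a single $k$-face so that only one homogeneity degree $k$ contributes, then work at a fixed direction space $L$ and peel off $r$ by a homogeneity/Vandermonde argument) coincides with the paper's, but the central step of the proof is missing. The heart of the theorem is to show that, for a \emph{fixed} $k$-dimensional subspace $L$ and for $u$ ranging over $\Sn\cap L^\perp$, no cancellation can occur among the symmetric products $Q^mQ_L^{\,j}u^s$ with $2m+2j+s$ fixed. You assert this via ``standard facts about the symmetric tensor algebra'' plus a variation-of-$L$ argument (``the coefficient of each $Q^mQ_L^{\,j}$ must vanish''), and neither works as stated. First, ``coefficients'' of $Q^mQ_L^{\,j}$ in a symmetric-tensor identity are not well defined: the relation has the form $\sum_{m,j,s}Q^mQ_L^{\,j}\odot T_s=0$ with \emph{tensor} factors $T_s$ (the $u$-integrals), and such relations admit nontrivial solutions — e.g.\ $Q\odot A+Q_L\odot B=0$ with $A=-Q_L\odot C$, $B=Q\odot C$ — so linear independence of the monomials $Q^mQ_L^{\,j}$ alone yields nothing. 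Second, rotating the face does not decouple the system, because the normal region $\nu(P,F)$ lies in $L^\perp$: the $u^s$-factors move together with $L$, so you cannot treat them as fixed data while solving a linear system over a family of subspaces. (Section 5 of the paper shows how delicate such independence claims are: the analogous statement with a sum over a family of coordinate subspaces is simply false.)

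What is actually needed, and what the paper does, is an explicit polynomial computation at fixed $L$. Since the putative relation holds for all Borel $\omega\subset\Sn\cap L^\perp$, it holds pointwise in $u$; evaluating the rank-$(p-r)$ tensors on the tuple $(x,\dots,x)$ converts it into a polynomial identity in $x_1,\dots,x_n$ and the coordinates of $u$. For $0\le k\le n-2$ one fixes $x_{k+1},\dots,x_n\neq 0$ and uses that $\langle u,x\rangle$ then fills a nondegenerate interval as $u$ varies in $\Sn\cap L^\perp$; this separates the powers $s$. For fixed $s$ the identity becomes $\sum_m b_m\,\|x\|^{2m}\,\|\pi_L x\|^{2(q-m)}=0$ with $2q=p-r-s$, and comparing the highest occurring power of $x_n$ gives $b_q=0$, then $b_{q-1}=0$, and so on. The case $k=n-1$, where $u=\pm u_F$ and the interval argument is unavailable, requires a separate (easy) comparison of powers of $x_1$ under the constraint $|x_n|=1$. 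Your proposal never engages with this mixing of $Q$, $Q_L$ and $u^s$, which is exactly where the work lies; note also that $k=0$ is not ``degenerate and simpler'' in a way that lets you skip it — it is handled inside the same computation, with $j=0$ forced and the $Q_L$-factor omitted.
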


\begin{proof} Suppose that
\begin{equation}\label{7} 
\sum_{m,r,s,j,k \atop 2m+2j+r+s=p} a_{kmrsj} Q^m\phi_k^{r,s,j}=0
\end{equation}
with $a_{kmrsj}\in{\mathbb R}$ and with $a_{0mrsj}=a_{(n-1)mrsj}=0$ for $j\not=0$.

Let $F$ be a $k$-dimensional polytope, $k\in\{0,\dots,n-1\}$. Let $\text{relint } M$ denote the relative interior of a convex set $M$ in $\R^n$. For arbitrary Borel sets $\beta\subset \text{relint }F$ and $\omega\subset L(F)^\perp\cap{\mathbb S}^{n-1}$ (where $L^\perp$ denotes the orthogonal complement of $L$) we have
$$ \phi_k^{r,s,j}(F,\beta\times\omega) =  Q_{L(F)}^j C_{n,k}^{r,s} \int_\beta x^r\,{\mathcal H}^k(\D x) \int_\omega u^s \, {\mathcal H}^{n-k-1}(\D u) $$
and $\phi_{k'}^{r,s,j}(F,\beta\times\omega) =0$ for $k'\not= k$. Therefore, for each $k\in\{0,\dots,n-1\}$ we get
$$ \sum_{m,r,s,j \atop 2m+2j+r+s=p} a_{kmrsj}Q^m Q_{L(F)}^j C_{n,k}^{r,s}\int_\beta x^r\,{\mathcal H}^k(\D x) \int_\omega u^s\,{\mathcal H}^{n-k-1}(\D u)=0$$
whenever $F$ is a $k$-dimensional polytope and $\beta\subset \text{relint }  F$, $\omega\subset L(F)^\perp\cap{\mathbb S}^{n-1}$ are  Borel sets.

We can choose a translate of $F,\beta$ such that $\int_\beta x^r\,{\mathcal H}^k(\D x)\not=0$. Replacing $F,\beta$ by multiples and comparing degrees of homogeneity, we conclude that for each fixed $r\in{\mathbb N}_0$ we have
$$ \sum_{m,s,j \atop 2m+2j+s=p-r} a_{kmrsj}Q^m Q_{L(F)}^j C_{n,k}^{r,s}\int_\beta x^r\,{\mathcal H}^k(\D x) \int_\omega u^s\,{\mathcal H}^{n-k-1}(\D u)=0.$$

Since the symmetric tensor algebra has no zero divisors, it follows that
$$ \sum_{m,s,j \atop 2m+2j+s=p-r} a_{kmrsj}Q^m Q_{L(F)}^j C_{n,k}^{r,s} \int_\omega u^s\,{\mathcal H}^{n-k-1}(\D u)=0.$$
This holds for arbitrary Borel sets $\omega \subset {\mathbb S}^{n-1}\cap L(F)^\perp$, hence we obtain
$$ \sum_{m,s,j \atop 2m+2j+s=p-r} a_{kmrsj}Q^m Q_{L(F)}^j C_{n,k}^{r,s} u^s=0$$
for all $u\in {\mathbb S}^{n-1}\cap L(F)^\perp$. Here $L(F)$ can be an arbitrary $k$-dimensional linear subspace of $\R^n$. Since $n$, $k$ and $r$ are fixed, we set $a_{kmrsj} C_{n,k}^{r,s}=:b_{msj}$ and write the latter relation in the form
\begin{equation}\label{4}
\sum_{m,s,j \atop 2m+2j+s=p-r} b_{msj}Q^m Q_{L(F)}^j  u^s=0.
\end{equation}

Let $(e_1,\dots,e_n)$ be an orthonormal basis of ${\mathbb R}^n$ such that $L(F)$ is the subspace spanned by $e_1,\dots,e_k$. Applying both sides of (\ref{4}) to the $(p-r)$-tuple
$$ (\underbrace{x,\dots,x}_{2m+2j+s}),\qquad x=x_1e_1+\cdots+x_ne_n\in\R^n,$$
we obtain
$$ \sum_{m,s,j \atop 2m+2j+s=p-r} b_{msj} (x_1^2+\dots+x_n^2)^m(x_1^2+\dots+x_k^2)^j(u_{k+1} x_{k+1}+\dots+u_nx_n)^s=0.$$
This holds for all $x_1,\dots,x_n\in{\mathbb R}$ and all $u_{k+1},\ldots,u_n\in\R$ such that 
$u_{k+1}e_{k+1}+\dots +u_ne_n\in{\mathbb S}^{n-1}$. If $k=0$, then $b_{msj}=0$ for $j\neq 0$ and for $j=0$ 
the factor $(x_1^2+\dots+x_k^2)^j$ is omitted (respectively, taken as 1). 

First we assume now that $0\le k\le n-2$. For fixed $x_{k+1},\dots,x_n\not=0$, the values
$$ t = u_{k+1} x_{k+1}+\dots+u_nx_n$$
with $u_{k+1}e_{k+1}+\dots+u_ne_n\in{\mathbb S}^{n-1}$ fill a nondegenerate interval, hence it follows that
$$ \sum_{m,j \atop 2m+2j=p-r-s} b_{msj}(x_1^2+\dots+ x_n^2)^m(x_1^2+\dots+ x_k^2)^j=0$$
for all $s\in{\mathbb N}_0$. Only such $s$ occur here for which $p-r-s$ is even, say equal to $2q$. For fixed $s$, the latter relation can be written in the form 
$$ \sum_{m=0}^q b_m (x_1^2+\dots+ x_n^2)^m(x_1^2+\dots+ x_k^2)^{q-m}=0$$
with $b_m:= b_{ms(q-m)}$. It holds for all $x_1,\dots,x_n\not=0$ and hence for arbitrary $x_1,\dots, x_n$. Since the highest appearing power of $x_n$ must appear with coefficient zero, we have $b_q=0$, then $b_{q-1}=0$, and so on until $b_0=0$. Thus, all coefficents $a_{kmrsj}$ in (\ref{7}) with $k\le n-2$ are zero.

It remains to consider $k=n-1$. In this case, we have
$$ b_{msj}=a_{(n-1)mrsj}C_{n,n-1}^{r,s}=0 \quad\mbox{for }j\not=0,$$
hence (\ref{4}) reduces to 
$$ \sum_{m,s \atop 2m+s=p-r} b_{ms0}Q^m u^s=0.$$
Similarly as above, this implies that 
$$ \sum_{m=0}^{\lfloor \frac{q}{2}\rfloor} b_{m(q-2m)0}(x_1^2+\dots+ x_n^2)^m x_n^{q-2m}=0$$
for all $x_1,\ldots,x_n\in\R$ with $|x_n|= 1$, where $q=p-r$. Comparing powers of $x_1$, we conclude that all coefficients $b_{m( q-2m) 0}$ must be zero. Thus, also all coefficients $a_{(n-1)mrsj}$ in (\ref{7}) are zero. 
\end{proof}

The local characterization for polytopes, Theorem  \ref{Theorem2.2} without the assertion about linear independence, was essentially proved in \cite{Sch12}, as mentioned in the previous section. One of the further differences is that in \cite{Sch12} it was additionally assumed that $\Gamma(P,\cdot)$ is concentrated on ${\rm Nor}\,P$. By Lemma \ref{Lemma3.3}, this follows from the other assumptions. Another difference is the fact that the mappings corresponding to $\phi_k^{r,s,j}$ in 
\cite[Thm.~1]{Sch12} involve (with slightly different notations) tensors of the form $Q_L^lQ_{L^\perp}^{m-l}$. But the latter is a linear combination of tensors of the form $Q^iQ_L^j$, since $Q_L+Q_{L^\perp}=Q$. Further, we point out that in Theorem \ref{Theorem2.3} the terms $\phi_{n-1}^{r,s,j}$ with $j\ge 1$ are not really needed, due to Lemma \ref{Lemma3.4}.

Next, we wish to point out that the proof of \cite[Thm. 1]{Sch12}, and thus the proof of Theorem \ref{Theorem2.2}, can be simplified slightly. We consider first the translation invariant case, then we show how the general case can be deduced from this special one.

\begin{theorem}\label{Theorem3.2}
Let $p\in{\mathbb N}_0$. Let $\Gamma:\Pn\times\B(\Sigma^n)\to\T^p$ be a mapping with the following properties.\\
$\rm (a)$ $\Gamma(P,\cdot)$ is a $\T^p$-valued measure, for each $P\in\Pn$; \\
$\rm (b)$ $\Gamma$ is translation invariant and rotation covariant;\\
$\rm (c)$ $\Gamma$ is locally defined.\\
Then $\Gamma$ is a linear combination, with constant coefficients, of the mappings $Q^m\phi^{0,s,j}_k$, where $m,s,j\in\N_0$ satisfy $2m+2j+s=p$ and where $k\in\{0,\dots,n-1\}$, but $j=0$ if $k\in\{0,n-1\}$.
\end{theorem}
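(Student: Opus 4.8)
The strategy is to reduce the classification of translation-invariant, rotation-covariant, locally defined tensor-measure valuations on polytopes to a purely combinatorial/linear-algebraic statement about what can happen on a single face, then to reconstruct $\Gamma$ from its behavior on faces. By Lemma \ref{Lemma3.3}, for each polytope $P$ the measure $\Gamma(P,\cdot)$ is concentrated on $\mathrm{Nor}\,P = \bigcup_{k=0}^{n-1}\bigcup_{F\in\mathcal F_k(P)} \mathrm{relint}\,F\times\nu(P,F)$ (together with lower-dimensional overlaps, which we handle separately). So it suffices to understand, for each $k$ and each $k$-face $F$, the restriction of $\Gamma(P,\cdot)$ to the ``cell'' $\mathrm{relint}\,F\times\nu(P,F)$.

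\emph{Step 1: localization to a single face.} Fix $k\in\{0,\dots,n-1\}$ and a $k$-dimensional linear subspace $L$. Consider a polytope $P$ that, near a relatively interior point of some $k$-face $F$ with $L(F)=L$, looks like the product of a $k$-dimensional neighborhood in $L$ with a fixed $(n-k)$-dimensional cone $C$ in $L^\perp$. Because $\Gamma$ is locally defined, $\Gamma(P,\beta\times\omega)$ for $\beta\subset\mathrm{relint}\,F$, $\omega\subset\mathrm{int}\,(C\cap\Sn)$ depends only on $L$, $C$, $\beta$, $\omega$. Translation invariance forces the dependence on $\beta$ to be via a translation-invariant measure on $L$, hence a multiple of $\mathcal H^k\!\restriction\!L$. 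So there is a $\T^p$-valued Borel measure $\mu=\mu_{L,C}$ on $C\cap\Sn$ with
\[
\Gamma(P,\beta\times\omega)=\mathcal H^k(\beta)\,\mu_{L,C}(\omega).
\]
Rotation covariance under the subgroup $\mathrm O(L^\perp)$ (acting trivially on $L$) and under the full group (relating different $L$) will pin down the structure of $\mu_{L,C}$.

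\emph{Step 2: identifying $\mu_{L,C}$.} The measure $\mu_{L,C}$ is a $\T^p$-valued measure on the $(n-k-1)$-sphere $C\cap\Sn$, covariant under $\mathrm O(L)\times\mathrm O(L^\perp)$ in the appropriate sense and additive in the cone $C$ (the valuation property is not assumed here, but the measure property on $\Sigma^n$ forces consistency when a face is subdivided). I expect that a scalar version of this — the case $p=0$ — is exactly Glasauer's Lemma 1.3: it forces $\mu_{L,C}(\omega)=c_k\,\mathcal H^{n-k-1}(\omega)$ for $\omega\subset\mathrm{int}\,(C\cap\Sn)$ with a universal constant $c_k$, so the scalar $\Gamma$ is a combination of support measures. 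For the tensor case, one writes $\mu_{L,C}=\int(\cdots)\,\mathcal H^{n-k-1}(\mathrm du)$ with a $\T^p$-valued density that, by $\mathrm O(L^\perp)$-covariance at the point $u$, must be a symmetric-tensor polynomial built from the ``allowed'' building blocks at $(u,L)$: powers of $Q$, powers of $Q_L$, and powers of $u$ (note $u\perp L$, so $Q_{L^\perp}=Q-Q_L$ handles the remaining directions, and $u^2$ can be split off). Matching the degree $p$ gives precisely the terms $Q^mQ_L^j u^s$ with $2m+2j+s=p$, i.e. the densities of $Q^m\phi_k^{0,s,j}$. This is the analogue of the classical fact that an $\mathrm O(n-k)$-covariant tensor-valued function on a sphere is a polynomial in $Q$ and $u$; the presence of the extra invariant subspace $L$ merely adds $Q_L$ to the list. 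The constraint $j=0$ for $k=n-1$ is automatic because then $L^\perp$ is a line and $u^2=Q_{L^\perp}=Q-Q_L$, so $Q_L$ is not independent (Lemma \ref{Lemma3.4}); and $j=0$ for $k=0$ because $Q_L=0$ there.

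\emph{Step 3: matching on lower-dimensional overlaps and concluding.} Having shown that $\Gamma$ agrees with a specific linear combination $\Gamma_0$ of the $Q^m\phi_k^{0,s,j}$ on every product set $\mathrm{relint}\,F\times\omega$ with $\omega$ in the interior of the normal cone, one must check that $\Gamma-\Gamma_0$ vanishes on the remaining part of $\mathrm{Nor}\,P$ — the boundaries of the normal cones, where support elements of a $k$-face meet those of lower-dimensional faces. Here the measure property is used: such a set is covered by the cells associated to faces of smaller dimension (where the relevant normal directions lie in the \emph{interior} of a lower-dimensional normal cone), so the argument closes by downward induction on $k$, peeling off the top-dimensional contribution first (which is determined on a set of full measure in its cell) and then recursing. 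The coefficients extracted from different faces must be globally consistent by rotation covariance, giving honest constants. \textbf{The main obstacle} will be Step 2: carefully proving that an $\mathrm O(L^\perp)$-covariant $\T^p$-valued measure on the punctured sphere $C\cap\Sn$ is forced to be an absolutely continuous measure whose density is a polynomial in $Q$, $Q_L$, $u$ — in particular ruling out singular parts concentrated on sub-spheres and controlling the interplay between the valuation-type additivity across a subdivided face and the pointwise covariance. Organizing this so that the $Q_L^j$ terms genuinely appear (and only for $1\le k\le n-2$) is the delicate point, and it is exactly where the polytopal setting is more subtle than the global Minkowski-tensor case.
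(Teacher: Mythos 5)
Your overall plan is the same as the paper's: by Lemma \ref{Lemma3.3} and the locally defined property, everything reduces to sets ${\rm relint}\,F\times\omega$ with $F\in\F_k(P)$ and $\omega\subset\Sn\cap L(F)^\perp$; translation invariance in direction $L=L(F)$ forces the factorization $\Gamma(P,\beta\times\omega)=a(L,\omega)\,\Ha^k(\beta)$; and the theorem then hinges on classifying the rotation-covariant $\T^p$-valued measure $a(L,\cdot)$ on $\Sn\cap L^\perp$, after which one reassembles over the faces. But exactly this classification --- your Step 2 --- is not proved: you write ``I expect'' and you yourself flag the absolute continuity of $\mu_{L,C}$ and the polynomial form $Q^mQ_L^ju^s$ of its density as the main unresolved obstacle. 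That step is the heart of the theorem; in the paper it is precisely the content imported from \cite{Sch12}, namely the covariance identities $a(\vartheta L,\vartheta\omega)=\vartheta a(L,\omega)$ and $\vartheta a(L,\omega)=a(L,\omega)$ for $\vartheta$ fixing $L^\perp$ pointwise, together with Lemmas 3 and 4 there, which give $a(L,\omega)=\sum_{i,j}c_{pkij}\,Q_L^j\,Q_{L^\perp}^i\int_\omega u^{p-2j-2i}\,\Ha^{n-k-1}(\D u)$. Without an argument here (for instance: for $1\le k\le n-2$ the total variation of $a(L,\cdot)$ is ${\rm O}(L^\perp)$-invariant, hence a multiple of $\Ha^{n-k-1}$ on the sphere, and then a pointwise invariant-theoretic classification of the density under the stabilizer of the pair $(L,u)$), the proposal remains a plan rather than a proof of the decisive step.

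There is also a concrete flaw in your Step 3. A support element $(x,u)$ with $x\in{\rm relint}\,F$ and $u$ in the relative boundary of $\nu(P,F)$ lies in the cell ${\rm relint}\,F\times\nu(P,F)$ and in no cell of another face, since the relative interiors of distinct faces are disjoint; so ``such a set is covered by the cells associated to faces of smaller dimension'' is false, and the proposed downward induction does not close. The paper avoids the issue entirely: by the locally defined property one compares $P$ near ${\rm relint}\,F$ with the $k$-dimensional polytope $F$ itself, whose normal cone at relatively interior points is all of $L^\perp$, so $a(L,\cdot)$ is defined and classified on the \emph{whole} sphere $\Sn\cap L^\perp$, not merely on the interior of the normal cone; the disjoint decomposition of $(\beta\times\omega)\cap{\rm Nor}\,P$ into the cells ${\rm relint}\,F\times\nu(P,F)$ then yields the representation directly, the boundary sets needing no separate treatment (they are $\Ha^{n-k-1}$-null for the absolutely continuous classified measure). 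Your cone-dependent $\mu_{L,C}$ and the restriction to ${\rm int}\,(C\cap\Sn)$ are what created the apparent leftover; replacing them by the full-sphere measure, as in the paper, also removes the need for your ``additivity in $C$'' remark. Your side remarks on $k=0$ ($Q_L=0$) and $k=n-1$ (Lemma \ref{Lemma3.4}) are correct.
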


\begin{proof} We sketch the proof, to show where the proof given in \cite{Sch12} can be simplified. We have to prove (for each polytope $P$) the equality of two tensor-valued measures on $\B(\Sigma^n)$. It is sufficient to prove equality on sets of the form $\beta\times \omega$ with $\beta\in\B(\R^n)$ and $\omega\in\B(\Sn)$. Disjoint decomposition of $P$ into relatively open faces gives
$$ (\beta\times \omega)\cap{\rm Nor}\,P = \bigcup_{k=0}^{n-1}\bigcup_{F\in{\mathcal F}_k(P)}(\beta\cap {\rm relint}\, F)\times (\omega\cap\nu(P,F)).$$
By Lemma \ref{Lemma3.3}, $\Gamma(P,\cdot)$ is concentrated on ${\rm Nor}\,P$, hence
\begin{equation}\label{6} 
\Gamma(P,\beta\times \omega) = \sum_{k=0}^{n-1} \sum_{F\in{\cal F}_k(P)} \Gamma(P,(\beta\cap{\rm relint}\,F) \times (\omega\cap \nu(P,F))).
\end{equation}
Thus, it is sufficient to determine $\Gamma(P, \beta\times \omega)$ for the case where $\beta\subset {\rm relint}\,F$ and $\omega\subset \nu(P,F)$, for some face $F\in{\mathcal F}_k(P)$.

Therefore, we consider the following data: a number $k\in\{0,\dots,n-1\}$, a $k$-dimensional linear subspace $L\subset{\mathbb R}^n$,  a bounded Borel set $\beta \subset L$, a Borel set $\omega\subset{\mathbb S}^{n-1}\cap L^\perp$, a $k$-dimensional polytope $P\subset L$ with $\beta\subset{\rm relint}\,P$, and we determine $\Gamma(P,\beta\times \omega)$ in this case.

First, for fixed $\omega$, we  consider the functional
$$ \beta\mapsto \Gamma(P,\beta\times \omega)$$
for bounded Borel sets $\beta\subset L$, where $P$ is chosen such that $\beta\subset{\rm relint}\,P$ (the particular choice of  $P$ is irrelevant, since $\Gamma$ is locally defined).

Let $f(\beta)$ be a coordinate of $\Gamma(P,\beta\times\omega)$. We choose a polytope $P\subset L$ with $\beta\subset {\rm relint}\,P$. For $t\in L$ we then have $\beta+t\subset {\rm relint}(P+t)$ and it follows that $f(\beta+t)=f(\beta)$. Thus, $f$ is a translation invariant finite signed measure on ${\mathcal B}_b(L)$ and hence a constant multiple of Lebesgue measure in $L$, where the factor depends on $L$ and $\omega$. We conclude that
$$ \Gamma(P,\beta\times\omega)=a(L,\omega){\mathcal H}^k(\beta)$$
with a tensor $a(L,\omega)\in{\mathbb T}^p$. It is shown in \cite{Sch12} that $a(L,\cdot)$ is a ${\mathbb T}^p$-valued measure satisfying
$$ a(\vartheta L,\vartheta \omega) =\vartheta a(L,\omega)\qquad\mbox{for }\vartheta\in{\rm O}(n)$$
and $\vartheta a(L,\omega)= a(L,\omega)$ if $\vartheta$ fixes $L^\perp$ pointwise. From this, it is deduced in \cite{Sch12} (in particular, Lemmas 3 and 4) that
$$ a(L,\omega) = \sum_{j=0}^{\lfloor p/2\rfloor} Q_L^j \sum_{i=0}^{\lfloor p/2\rfloor} c_{pkij}Q_{L^\perp}^i\int_\omega u^{p-2j-2i}\,{\mathcal H}^{n-k-1}(\D u) $$
with real constants $c_{pkij}$ (the dependence of the coefficients on $k$ was not shown explicitly in \cite{Sch12}) and 
$c_{p0ij}=0$ for $j\ge 1$. It follows that
$$ \Gamma(P,\beta\times\omega)= \sum_{i,j=0}^{\lfloor p/2\rfloor}c_{pkij} Q_L^j (Q-Q_L)^i \int_\beta {\mathcal H}^k(\D x)\int_\omega u^{p-2j-2i}\,{\mathcal H}^{n-k-1}(\D u). $$
Now let $P\in{\mathcal P}^n$, $\beta\in {\mathcal B}({\mathbb R}^n)$ and $\omega\in {\mathcal B}({\mathbb S}^{n-1})$ be arbitrary. From (\ref{6}) we get
\begin{eqnarray*} 
\Gamma(P,\beta\times \omega) &=& \sum_{k=0}^{n-1} \sum_{F\in{\cal F}_k(P)} \Gamma(P,(\beta\cap{\rm relint}\,F)\times(\omega\cap \nu(P,F)))\\
& =& \sum_{k=0}^{n-1} \sum_{F\in{\cal F}_k(P)} \sum_{i,j=0}^{\lfloor p/2\rfloor}c_{pkij} Q_{L(F)}^j (Q-Q_{L(F)})^i\\
& & \times\int_{\beta\cap F} {\mathcal H}^k(\D x)\int_{\omega\cap \nu(P,F)} u^{p-2j-2i}\,{\mathcal H}^{n-k-1}(\D u).
\end{eqnarray*}
This extends to
\begin{eqnarray*} 
\Gamma(P,\eta) &= & \sum_{i,j=0}^{\lfloor p/2\rfloor} \sum_{k=0}^{n-1}c_{pkij} \sum_{F\in{\cal F}_k(P)}  Q_{L(F)}^j (Q-Q_{L(F)})^i \\
& &\times \int_F \int_{\nu(P,F)} {\bf 1}_\eta(x,u) u^{p-2j-2i}\,\,{\mathcal H}^{n-k-1}(\D u)\,{\mathcal H}^k(\D x)
\end{eqnarray*}
for all $\eta\in{\mathcal B}(\Sigma^n)$. Thus, $\Gamma$ is a linear combination, with constant coefficients, of the mappings $Q^m\phi^{0,s,j}_k$, with $m,s,j,k\in{\mathbb N}_0$, $k\le n-1$ and $2m+2j+s=p$, but $j=0$ if $k\in\{0,n-1\}$. 
\end{proof}

Now we indicate how Theorem \ref{Theorem2.2} can be derived from Theorem \ref{Theorem3.2}. Since we already know that the mappings $Q^m\phi_k^{p,s,j}$ have the properties (a), (b), (c) of Theorem \ref{Theorem2.2} and since linear independence follows from Theorem \ref{Theorem3.1}, it remains to show that a mapping $\Gamma$ with the properties in Theorem \ref{Theorem2.2} is a linear combination of mappings $Q^m\phi_k^{p,s,j}$. For this, we extend an argument used by Alesker \cite{Ale99b}. By (\ref{1}) we have
$$ \Gamma(P+t,\eta+t)= \sum_{i=0}^p \Gamma_{p-i}(P,\eta)\frac{t^i}{i!}.$$
By Lemma \ref{Lemma3.1},  $\Gamma_0$ is translation invariant. Hence, $\Gamma_0$ has all the properties of $\Gamma$ in Theorem \ref{Theorem3.2} (where we have to put $p=0$). It follows that
$$ \Gamma_0 =\sum_{m,s,j,k \atop 2m+2j+s=0} c_{msjk} Q^m\phi_k^{0,s,j}$$
with real constants $ c_{msjk}$. Precisely as in the proof of (\ref{2.7}) we get
\begin{equation}\label{n3.1} 
\phi_k^{r,s,j}(P+t,\eta+t) =\sum_{i=0}^r \phi_k^{r-i,s,j}(P,\eta)\frac{t^i}{i!}. 
\end{equation}
Hence, if we define
$$ \Delta:= \sum_{m,s,j,k \atop 2m+2j+s=0} c_{msjk}  Q^m\phi_k^{p,s,j},$$
then 
$$ \Delta(P+t,\eta+t)= \sum_{i=0}^p \Delta_{p-i}(P,\eta)\frac{t^i}{i!}$$
with tensors $\Delta_{p-j}(P,\eta)\in{\mathbb T}^{p-j}$, and here $\Delta_0=\Gamma_0$. Therefore, the mapping $\Gamma':=\Gamma-\Delta$ satisfies
$$ \Gamma'(P+t,\eta+t) = \sum_{j=0}^{p-1} \Gamma'_{p-j}(P,\eta)\frac{t^j}{j!}.$$
By Lemma \ref{Lemma3.1}, $\Gamma'_1$ is translation invariant. Hence, $\Gamma'_1$ has all the properties of $\Gamma$ in Theorem \ref{Theorem3.2} (where now we have to put $p=1$). Therefore,
$$ \Gamma'_1 =\sum_{m,s,j,k \atop 2m+2j+s=1} c'_{msjk} Q^m\phi_k^{0,s,j}$$
with real constants $c'_{msjk}$. Subtracting from $\Gamma'$ a suitable linear combination of tensor valuations $Q^m\phi_k^{p-1,s,j}$, we obtain a mapping $\Gamma''$ which is translation covariant of degree $p-2$. We can now repeat the argument, apply Theorem \ref{Theorem3.2} with $p=2$, and so on. After finitely many steps, $\Gamma$ is represented as a linear combination of mappings $Q^m\phi_k^{p,s,j}$, thus Theorem \ref{Theorem2.2} is proved. \qed

The local Minkowski tensors appearing in Theorem \ref{Theorem2.2} are valuations, though this is not one of the assumptions of the theorem. The valuation property was asserted in \cite{Sch12}. We give a proof.

\begin{theorem}\label{T3.3}
For each $\eta\in{\mathcal B}(\Sigma^n)$, the mapping $\phi_k^{r,s,j}(\cdot,\eta)$ is a valuation on $\Pn$.
\end{theorem}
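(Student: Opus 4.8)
The plan is to show directly that for polytopes $P,P'\in\Pn$ with $P\cup P'\in\Pn$, the inclusion--exclusion identity
$$\phi_k^{r,s,j}(P\cup P',\eta)+\phi_k^{r,s,j}(P\cap P',\eta)=\phi_k^{r,s,j}(P,\eta)+\phi_k^{r,s,j}(P',\eta)$$
holds for every $\eta\in\B(\Sigma^n)$. By the definition \eqref{2.6}, each term is built from data attached to the $k$-faces of the polytope in question: the direction space $Q_{L(F)}$ via the tensor $Q_{L(F)}^j$, the integration of $x^r$ over $F$, and the integration of $u^s$ over the normal cone slice $\nu(P,F)$. Since $\eta$ and the integrand $x^ru^s$ are fixed throughout, the whole statement reduces coordinatewise to an identity between signed measures; I would fix a coordinate of $\T^p$ and prove the scalar identity, the constant $C_{n,k}^{r,s}$ cancelling.

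The first step is to recall the known additivity of the support measures: the mapping $P\mapsto\Lambda_k(P,\cdot)$ is a valuation on $\Pn$ with values in the space of (finite, signed) measures on $\Sigma^n$, as recorded in \cite[Sec.~4.2]{Sch14}. Hence by \eqref{2.4} (or \eqref{2.6a}) the plain local Minkowski tensor $\phi_k^{r,s}(\cdot,\eta)$ is already a valuation; the only genuinely new ingredient in $\phi_k^{r,s,j}$ is the extra factor $Q_{L(F)}^j$, which is \emph{not} the restriction to $k$-faces of any quantity local to lower-dimensional faces. So the core of the argument is an \emph{additivity statement for the $k$-dimensional part of the support measure weighted by $Q_{L(F)}^j$}. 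Concretely, I would introduce, for a polytope $Q$, the measure
$$\Xi_k^j(Q,\cdot):=\sum_{F\in\F_k(Q)}Q_{L(F)}^{\hspace*{1pt}j}\,\bigl(\Ha^k\!\restriction F\bigr)\otimes\bigl(\Ha^{n-k-1}\!\restriction\nu(Q,F)\bigr)$$
on $\Sigma^n$, so that $\phi_k^{r,s,j}(Q,\eta)=C_{n,k}^{r,s}\int_\eta x^ru^s\,\D\Xi_k^j(Q,\cdot)$, and reduce the theorem to showing $Q\mapsto\Xi_k^j(Q,\cdot)$ is a valuation. The advantage is that $x^ru^s$ drops out and only the geometry of faces matters.

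To prove additivity of $\Xi_k^j$, the natural route is to localize on $\R^n\times\Sn$ and use the standard fact that, for $P\cup P'\in\Pn$, every $k$-face of $P\cup P'$, of $P\cap P'$, of $P$ and of $P'$ is obtained by intersecting a $k$-face of one of these polytopes with the others, and the direction space $L(F)$ is constant along such face pieces. More precisely, I would pick a point $(x,u)$ in the normal bundle of one of the four polytopes with $x$ in the relative interior of a face; in a neighborhood of $x$ the four polytopes look locally like polyhedral cones, and the local face through $x$ that carries $(x,u)$ has a well-defined $k$-dimensional direction space which agrees for all polytopes that actually contain that face-piece. Summing the contributions and invoking the inclusion--exclusion relation $\mathbf 1_{P\cup P'}+\mathbf 1_{P\cap P'}=\mathbf 1_P+\mathbf 1_{P'}$ for the relevant face-indicator combinations gives the identity. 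The cleanest technical device is to write $\Xi_k^j(Q,\eta)$ as a sum over $k$-faces and note that, because $Q_{L(F)}$ depends only on the direction space, the map $(x,u)\mapsto Q_{L(\cdot)}^j$ is locally constant on $\Nor\,Q$ restricted to the (relative interiors of) $k$-faces; hence $\Xi_k^j(Q,\eta)=\int_\eta Q_{L(x,u)}^j\,\D\Lambda_k(Q,(x,u))$ up to the normalization in \eqref{2.2a}, where $L(x,u)$ denotes that common direction space. Since $Q_{L(x,u)}^j$ is a \emph{fixed} bounded measurable tensor-valued function on $\Sigma^n$ that does not depend on which polytope we use, additivity of $\Xi_k^j(\cdot,\eta)$ follows immediately from additivity of $\Lambda_k(\cdot,\eta)$, and the theorem follows.

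The main obstacle, and the one step that needs care rather than routine checking, is the claim that $Q_{L(x,u)}^j$ is genuinely a well-defined function on the union of the normal bundles, i.e.\ that whenever a support element $(x,u)$ with $x$ in the relative interior of a $k$-face lies in $\Nor\,P$, $\Nor\,P'$, $\Nor(P\cup P')$ or $\Nor(P\cap P')$, the associated $k$-dimensional direction space is the same. This is where the hypothesis $P\cup P'\in\Pn$ is essential: one must argue that near such an $x$ the boundary structures of the four polytopes are compatible, so that the $k$-face of $P\cup P'$ through $x$ and the $k$-face of $P$ through $x$ span the same linear subspace. This is a local polyhedral-geometry lemma; once it is in place the rest is bookkeeping with \eqref{2.2a} and the known valuation property of $\Lambda_k$.
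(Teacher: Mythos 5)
Your strategy is genuinely different from the paper's, and it can be made to work, but as submitted it is incomplete at exactly the point you yourself single out. The paper does not go through additivity of the support measures at all: it invokes Sallee's criterion, by which it suffices to verify $\varphi(P\cap H^-)+\varphi(P\cap H^+)=\varphi(P)+\varphi(P\cap H)$ for hyperplanes $H$, and then checks this identity for $\phi_k^{r,s,j}$ by a five-case analysis of the $k$-faces of $P$, $P\cap H^\pm$ and $P\cap H$, the delicate cases being handled with the additivity of the spherical index function and the fact that $\Ha^{n-k-1}(\nu(P,G))=0$ for $(k+1)$-faces $G$. You instead want to represent $\phi_k^{r,s,j}(Q,\cdot)$, for each of the four polytopes $Q\in\{P,P',P\cup P',P\cap P'\}$, as the integral of one fixed density $x^ru^sQ_{L(x,u)}^j$ against a constant multiple of $\Lambda_k(Q,\cdot)$, and then import the known valuation property of $\Lambda_k$ from \cite[Sec.~4.2]{Sch14}; this proves the full inclusion--exclusion identity directly, without Sallee's reduction, and the normalization bookkeeping via (\ref{2.2a}), (\ref{2.6a}) is indeed routine.

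The gap is that the ``local polyhedral-geometry lemma'' which makes $Q_{L(x,u)}^j$ well defined is asserted, not proved, and in your setup it carries the entire content of the theorem. Two remarks on closing it. First, you do not need the pointwise statement: since $\Lambda_k(Q,\cdot)$ on the piece over a $k$-face $F$ is a constant multiple of the product of $\Ha^k$ on $F$ and $\Ha^{n-k-1}$ on $\nu(Q,F)$, the prescriptions coming from two of the four polytopes can disagree only at points $x\in F_1\cap F_2$ with $F_1\in\F_k(Q_1)$, $F_2\in\F_k(Q_2)$ and $L(F_1)\neq L(F_2)$; then ${\rm aff}\,F_1\cap{\rm aff}\,F_2$ has direction space $L(F_1)\cap L(F_2)$ of dimension at most $k-1$, so $\Ha^k(F_1\cap F_2)=0$ and the conflict set is null for all four measures. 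Hence a single measurable density representing all four bodies exists, and additivity of $\phi_k^{r,s,j}(\cdot,\eta)$ follows; this trivial dimension count is all the ``lemma'' your argument actually requires. Second, the pointwise statement is in fact true, but it needs an argument you have not supplied. For the nested pairs (for instance $P\subset P\cup P'$) use $N(P\cup P',x)\subset N(P,x)$ together with the fact that for $x\in{\rm relint}\,F$, $F\in\F_k(Q)$, the normal cone $N(Q,F)$ has dimension $n-k$ and spans $L(F)^\perp$; the spans are then nested subspaces of equal dimension, hence equal, giving $L(F_1)=L(F_2)$. For the non-nested pair $(P,P')$ the convexity of $P\cup P'$ must enter: the support cones $C,C'$ of $P,P'$ at $x$ have lineality spaces exactly $L(F_1)$, $L(F_1')$, convexity of $C\cup C'$ forces $W:=L(F_1)+L(F_1')\subset C\cup C'$, and inside $W$ one checks that two closed convex cones with lineality spaces exactly $L(F_1)$ and $L(F_1')$ cannot cover $W$ unless $L(F_1)=L(F_1')$ (each would have to be a halfspace of $W$ bounded by $L(F_1)$ resp.\ $L(F_1')$, and two halfspaces with distinct boundary hyperplanes never cover $W$). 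So: right idea, a viable alternative to the paper's Sallee-plus-case-analysis proof, but the decisive step must be written out --- or, better, replaced by the measure-zero argument above.
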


\begin{proof}
To show that a function $\varphi$ on $\Pn$ with values in an abelian group is a valuation, it suffices to show that, after supplementing the definition by $\varphi(\emptyset)=0$, one has
\begin{equation}\label{A} 
\varphi(P\cap H^-) + \varphi(P\cap H^+) = \varphi(P)+\varphi(P\cap H)
\end{equation}
for $P\in\Pn$ and every hyperplane $H$, where $H^-,H^+$ are the two closed halfspaces bounded by $H$. This was first noticed by Sallee \cite{Sal68}. Hence, using the representation (\ref{2.6}) and the abbreviation
$$  J(P,F):= C_{n,k}^{r,s}\, Q_{L(F)}^{\hspace*{1pt}j}\int_F \int_{\nu(P,F)} {\bf 1}_\eta(x,u) x^r u^s\, \Ha^{n-k-1}(\D u)\,\Ha^k(\D x),$$
we have to show that
\begin{eqnarray}\label{B}
& & \sum_{F\in\F_k(P\cap H^-)} J(P\cap H^-,F) \enspace+ \sum_{F\in\F_k(P\cap H^+)} J(P\cap H^+,F) \nonumber\\
& & = \sum_{F\in\F_k(P)} J(P,F) \enspace+ \sum_{F\in\F_k(P\cap H)} J(P\cap H,F).
\end{eqnarray}

Let a polytope $P\in \Pn$ and a hyperplane $H$ be given. We consider a face 
$$ F\in {\mathcal F}_k(P)\cup {\mathcal F}_k (P\cap H)$$
and distinguish the following five cases.

Case 1: $F\not\subset H$ and  $F\subset H^-$. Then 
$$ F\in {\mathcal F}_k(P) \cap {\mathcal F}_k(P\cap H^-), \qquad F\notin{\mathcal F}_k(P\cap H^+)\cup {\mathcal F}_k(P\cap H).$$ 
Since $\nu(P\cap H^-,F)=\nu(P,F)$, it follows that $J(P\cap H^-,F)=J(P,F)$.

Case 2: $F\not\subset H$ and $F\subset H^+$. Similarly as in Case 1, it follows that $J(P\cap H^+,F)=J(P,F)$.

Case 3: $F\not\subset H$, $F\not \subset H^-$, and $F\not \subset H^+$. Then $F^-:= F\cap H^-\in{\mathcal F}_k(P\cap H^-)$ and $F^+:= F\cap H^+\in{\mathcal F}_k(P\cap H^+)$. Moreover, $L(F^-)=L(F^+)=L(F)$, $\nu(P\cap H^-,F^-)=\nu(P\cap H^+,F^+)=\nu(P,F)$,  $F^- \cup F^+= F$, and ${\mathcal H}^k(F^-\cap F^+)=0$. It follows that $J(P\cap H^-,F^-) + J(P\cap H^+,F^+) = J(P,F)$. 

Case 4: $F\subset H$ and $F\notin{\mathcal F}_k(P)$. Then there is a unique face $G\in{\mathcal F}_{k+1}(P)$ such that $F= G\cap H$. We have
$$ F\in {\mathcal F}_k(P\cap H^-)\cap {\mathcal F}_k(P\cap H^+)\cap {\mathcal F}_k(P\cap H).$$
We choose a point $q$ in the relative interior of $F$. For $Q\in\{P\cap H^-,P\cap H^+,P\cap H\}$ and for $u\in {\mathbb S}^{n-1}$ we then have ${\bf 1}_{\nu(Q,F)}(u)= j(Q,q,q+u)$, where $j$ denotes the index function defined in \cite[p. 231]{Sch14}; further,  ${\bf 1}_{\nu(P,G)}(u)= j(P,q,q+u)$. From the additivity of $j(\cdot,q,q+u)$ ({\em loc.
cit.}) we obtain
$$ {\bf 1}_{\nu(P\cap H^-,F)} + {\bf 1}_{\nu(P\cap H^+,F)} = {\bf 1}_{\nu(P,G)} + {\bf 1}_{\nu(P\cap H,F)}.$$
Since ${\mathcal H}^{n-k-1}(\nu(P,G))=0$, it follows that $J(P\cap H^-,F)+J(P\cap H^+,F) =J(P\cap H,F)$.

Case 5: $F\subset H$ and $F\in{\mathcal F}_k(P)$. Then (irrespective of whether $H$ supports $P$ or not) we have
$$ F\in {\mathcal F}_k(P\cap H^-)\cap {\mathcal F}_k(P\cap H^+)\cap {\mathcal F}_k(P\cap H)$$
and, using the index function similarly as in Case 4,
$$ {\bf 1}_{\nu(P\cap H^-,F)} + {\bf 1}_{\nu(P\cap H^+,F)} = {\bf 1}_{\nu(P,F)} + {\bf 1}_{\nu(P\cap H,F)}.$$
This gives $J(P\cap H^-,F)+J(P\cap H^+,F) = J(P,F)+J(P\cap H,F)$.

In the five cases, each of the pairs $(P,F)$ with $F\in{\mathcal F}_k(P)$ and $(P\cap H,F)$ with $F\in{\mathcal F}_k(P\cap H)$ was considered precisely once. But also each of the pairs $(P\cap H^-,F)$ with $F\in{\mathcal F}_k(P\cap H^-)$ and $(P\cap H^+,F)$ with $F\in{\mathcal F}_k(P\cap H^+)$ was obtained precisely once (in Case 3 with the notation $F^-$ or $F^+$). Hence, if we add all the established equations for $J$, we obtain (\ref{B}).
\end{proof}

We return to general convex bodies and prepare the proof of Theorem \ref{Theorem2.3} in Section \ref{sec5} by the final lemma of this section.
We say that $\Gamma:\Kn\times\B(\Sigma^n)\to\T^p$ is {\em homogeneous of degree} $k$ if $\Gamma(\lambda K,\lambda \eta) = \lambda^k\Gamma(K,\eta)$ for all $K\in\Kn$, $\eta\in\B(\Sigma^n)$ and $\lambda>0$, where $\lambda\eta:=\{(\lambda x,u):(x,u)\in\eta\}$.

\vspace{2mm}

\begin{lemma}\label{Lemma3.5}
Let $p\in\N_0$. Let $\Gamma:\Kn\times\B(\Sigma^n)\to\T^p$ be a mapping with the following properties.\\
$\rm (a)$ $\Gamma(K,\cdot)$ is a $\T^p$-valued measure, for each $K\in\Kn$;\\ 
$\rm (b)$ $\Gamma$ is translation invariant and rotation covariant;\\
$\rm (c)$ $\Gamma$ is locally defined;\\
$\rm (d)$ $\Gamma$ is weakly continuous.\\
Then $\Gamma= \sum_{k=0}^{n-1} \Gamma_k$, where each $\Gamma_k$ has properties $\rm (a) - (d)$ and is homogeneous of degree $k$.
\end{lemma}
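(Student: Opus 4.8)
\textbf{Proof plan for Lemma \ref{Lemma3.5}.}

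The plan is to extract the degree-$k$ homogeneous components of $\Gamma$ by a polynomiality argument in the dilation parameter, much as translation covariance was decomposed in Lemma \ref{Lemma3.1}. First I would fix $K\in\Kn$ and $\eta\in\B(\Sigma^n)$ and consider the map $\lambda\mapsto \Gamma(\lambda K,\lambda\eta)$ for $\lambda>0$. The key claim is that, coordinate-wise, this is a polynomial in $\lambda$ of degree at most $n-1$. To see this, note that since $\Gamma$ is locally defined and $\Gamma(K,\cdot)$ is concentrated on $\Nor K$ (Lemma \ref{Lemma3.3}), it suffices to understand $\Gamma$ on sets of the form $\beta\times\omega$ with $\beta\in\B(\R^n)$, $\omega\in\B(\Sn)$, and to exploit a Steiner-type/parallel-body comparison. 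More precisely, for a fixed reference convex body the measure $\Gamma(K+\rho B^n,\cdot)$ can be compared with $\Gamma(K,\cdot)$: one embeds the local picture of $\lambda K$ near a normal element into a parallel set. The cleanest route is to use that $\Gamma$, being locally defined and a measure concentrated on the normal bundle, is determined by its values on ``elementary'' pieces and that dilation by $\lambda$ acts on those pieces in a way that introduces only powers $\lambda^0,\dots,\lambda^{n-1}$ (the spatial factor can contribute at most $\Ha^{n-1}$-many dimensions of scaling, the spherical factor none, because $\lambda\eta$ leaves the $u$-coordinate fixed).

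Granting the polynomiality, I would then argue exactly as in Lemma \ref{Lemma3.2}: write $f(\lambda)=\sum_{k=0}^{n-1}\lambda^k f_k$ for each coordinate $f$ of $\Gamma(\lambda K,\lambda\eta)$, insert the $n$ values $\lambda=1,\dots,n$, and invert the (nonsingular Vandermonde) linear system to express each $f_k$ as a fixed linear combination $f_k=\sum_{m=1}^{n}b_{km}f(m)$ with constants $b_{km}$ independent of $K,\eta$. Coordinate-freely, this yields well-defined maps $\Gamma_k:\Kn\times\B(\Sigma^n)\to\T^p$ with $\Gamma=\sum_{k=0}^{n-1}\Gamma_k$ and
$$ \Gamma_k(K,\eta)=\sum_{m=1}^{n} b_{km}\,\Gamma(mK,m\eta). $$
From this explicit formula, every structural property of $\Gamma$ is inherited by each $\Gamma_k$: it is a $\T^p$-valued measure in $\eta$ (finite linear combination of such measures); it is translation invariant, since $mK+t=m(K+t/m)$ and $m\eta+t=m(\eta+t/m)$ and translation invariance of $\Gamma$ gives $\Gamma(mK+t,m\eta+t)=\Gamma(mK,m\eta)$; it is rotation covariant, since $\vartheta(mK)=m(\vartheta K)$ and $\vartheta(m\eta)=m(\vartheta\eta)$; it is locally defined, because $\eta\cap\Nor K=\eta\cap\Nor K'$ implies $m\eta\cap\Nor(mK)=m\eta\cap\Nor(mK')$ for each $m$; and it is weakly continuous, because $K_i\to K$ implies $mK_i\to mK$ and the integral against a continuous $f$ passes to the limit termwise. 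Finally, homogeneity of degree $k$ of $\Gamma_k$ follows from the uniqueness of the coefficients in a polynomial identity: replacing $K,\eta$ by $\mu K,\mu\eta$ multiplies $f(\lambda)$ by the substitution $\lambda\mapsto\mu\lambda$ inside the polynomial, so $f_k$ is multiplied by $\mu^k$; this is the standard argument and I would present it in one or two lines.

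The main obstacle is the very first step: establishing that $\lambda\mapsto\Gamma(\lambda K,\lambda\eta)$ is polynomial of degree $\le n-1$, since this is not an immediate consequence of the three listed axioms but requires genuinely using that $\Gamma(K,\cdot)$ is a measure concentrated on $\Nor K$ together with a local Steiner-type expansion. I would handle this by reducing, via the ``locally defined'' property and Lemma \ref{Lemma3.3}, to the case where $K$ is replaced near the relevant support elements by a half-space or, after approximation (using weak continuity), by a polytope, and then invoke the polytopal structure: on $\Pn$ the faces $F\in\F_k(P)$ scale as $\Ha^k(\lambda(\beta\cap F))=\lambda^k\Ha^k(\beta\cap F)$ while $\nu(P,F)$ is dilation-invariant, so the polytopal decomposition \eqref{6} exhibits $\Gamma(\lambda P,\lambda\eta)$ as a sum of terms homogeneous of degrees $0,\dots,n-1$ directly. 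Weak continuity then transfers the polynomial dependence (with uniformly bounded degree) from polytopes to all convex bodies, and the coefficient-extraction argument above applies verbatim on all of $\Kn$.
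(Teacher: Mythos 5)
Your overall architecture is the same as the paper's: show that $\lambda\mapsto\Gamma(\lambda K,\lambda\eta)$ is a polynomial of degree at most $n-1$, extract its coefficients by the Vandermonde device at $\lambda=1,\dots,n$ to obtain the explicit formula $\Gamma_k(K,\eta)=\sum_{m=1}^{n}b_{km}\Gamma(mK,m\eta)$, and read off properties (a)--(d) and the homogeneity from this formula together with weak continuity. That part of your plan is correct and coincides with the paper's proof (the paper performs the extraction on polytopes and then uses the same formula to define $\Gamma_k$ on $\Kn$; this difference is cosmetic, and your remark that the identity $\Gamma(\lambda K,\lambda\eta)=\sum_k\lambda^k\Gamma_k(K,\eta)$ passes to general bodies is fine once one notes that a tensor-valued measure is determined by its integrals against continuous functions).

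The gap is in the pivotal first step, the polynomiality on polytopes. The decomposition (\ref{6}) together with the scaling $\Ha^k(\lambda\beta)=\lambda^k\Ha^k(\beta)$ does not ``directly exhibit'' $\Gamma(\lambda P,\lambda\eta)$ as a sum of terms homogeneous of degrees $0,\dots,n-1$: knowing how the faces and their normal cones scale as sets says nothing about how the values $\Gamma(\lambda P,\cdot)$ on these pieces relate to $\Gamma(P,\cdot)$, since a priori the restriction of $\Gamma(P,\cdot)$ to $({\rm relint}\,F)\times\nu(P,F)$ could be an arbitrary measure. The missing ingredient is that for $F\in\F_k(P)$, $\beta\subset{\rm relint}\,F$ and $\omega\subset\nu(P,F)$ one has $\Gamma(P,\beta\times\omega)=a(L(F),\omega)\,\Ha^k(\beta)$ with a tensor $a(L(F),\omega)$ independent of $\beta$; this follows from translation invariance combined with the locally defined property, because $\beta\mapsto\Gamma(P,\beta\times\omega)$ is then a translation invariant finite signed measure on $L(F)$ and hence a constant multiple of Lebesgue measure. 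Notice that you never use translation invariance in your polynomiality argument, although it is indispensable exactly here; your Steiner-type/parallel-body heuristic does not supply it. This missing step is precisely the first part of the proof of Theorem \ref{Theorem3.2}, and the paper avoids the issue by invoking Theorem \ref{Theorem3.2} in full, which gives the homogeneous decomposition on $\Pn$ at once. If you add this step (by citing Theorem \ref{Theorem3.2}, or reproducing its Lebesgue-measure argument, which is all you actually need), the remainder of your proposal goes through as you describe.
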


\begin{proof} To the restriction $\Gamma'$ of $\Gamma$ to ${\mathcal P}^n\times{\mathcal B}(\Sigma^n)$ we can apply Theorem \ref{Theorem3.2} and deduce that $\Gamma'$ is a linear combination of the mappings $Q^m\phi_k^{0,s,j}$, $0\le k\le n-1$ and $m,s,j\in{\mathbb N}_0$. Each $Q^m\phi_k^{0,s,j}$ has properties $\rm (a) - (c)$ and is homogeneous of degree $k$. Hence, we can write $\Gamma'= \sum_{k=0}^{n-1} \Gamma_k$, where each $\Gamma_k:{\mathcal P}^n\times{\mathcal B}(\Sigma^n)\to{\mathbb T}^p$ has properties $\rm (a) - (c)$ and is homogeneous of degree $k$.

We argue similarly as in the proof of Lemma \ref{Lemma3.2}. For $P\in{\mathcal P}^n$, $\eta\in{\mathcal B}(\Sigma^n)$ and $\lambda>0$ we have
\begin{equation}\label{10} 
\Gamma(\lambda P,\lambda \eta)=\sum_{k=0}^{n-1} \lambda^k\Gamma_k(P,\eta).
\end{equation}
For fixed $P$ and $\eta$, let $f(\lambda)$ be a coordinate of $\Gamma(\lambda P,\lambda\eta)$, and let $f_k$ be the corresponding coordinate of $\Gamma_k(P,\eta)$. Then $f(\lambda)=\sum_{k=0}^{n-1} \lambda^k f_k$. We insert $\lambda =1,\dots,n$ and solve the resulting system of linear equations for the $f_k$, to obtain $f_k=\sum_{q=1}^n b_{kq}f(q)$, $k=0,\dots,n-1$, with constants $b_{kq}$ depending only on $n,k,q$. Thus,
\begin{equation}\label{8} 
\Gamma_k(P,\eta)=\sum_{q=1}^n b_{kq} \Gamma(qP,q\eta),\quad k=0,\dots,n-1.
\end{equation}
This holds for arbitrary $P\in{\mathcal P}^n$ and $\eta\in{\mathcal B}(\Sigma^n)$. Now we extend the definition of $\Gamma_k$ by
\begin{equation}\label{9} 
\Gamma_k(K,\eta):=\sum_{q=1}^n b_{kq} \Gamma(qK,q\eta),\quad k=0,\dots,n-1,
\end{equation}
for $K\in{\mathcal K}^n$ and $\eta\in{\mathcal B}(\Sigma^n)$. Then $\Gamma_k$ has properties $\rm (a) - (d)$.

For given $K\in{\mathcal K}^n$, let $(P_i)_{i\in{\mathbb N}}$ be a sequence of polytopes converging to $K$. Since $\Gamma(q P_i,q\,\cdot) \xrightarrow{w} \Gamma(qK,q\,\cdot)$ (where $\xrightarrow{w}$ denotes weak convergence), it follows from (\ref{8}) and (\ref{9}) that $\Gamma_k(P_i,\cdot)  \xrightarrow{w} \Gamma_k(K,\cdot)$. This implies that the extended $\Gamma_k$ is homogeneous of degree $k$. From (\ref{10}) and weak continuity it follows that
$$ \Gamma(\lambda K,\lambda \eta)=\sum_{k=0}^{n-1} \lambda^k\Gamma_k(K,\eta).$$
Thus, $\Gamma=\sum_{k=0}^{n-1} \Gamma_k$, where each $\Gamma_k$ has the properties $\rm (a) - (d)$ and is homogeneous of degree $k$. 
\end{proof}

\section{Weakly continuous extensions}\label{sec4}

So far, we have defined the generalized local Minkowski tensors $\phi_k^{r,s,j}$ with $j\ge 1$ only for polytopes. Lemma \ref{Lemma3.4} allows us to define $\phi_{n-1}^{r,s,j}$, for general convex bodies $K\in\Kn$, by
$$\phi_{n-1}^{r,s,j}(K,\cdot) = \sum_{i=0}^j (-1)^i\binom{j}{i} \frac{(s+2i)!\omega_{1+s+2i}}{s!\omega_{1+s}} Q^{j-i}\phi_{n-1}^{r,s+2i}(K,\cdot).$$
Then $\phi_{n-1}^{r,s,j}$ is weakly continuous, since this holds for $\phi_{n-1}^{r,l}$. For $1\le k\le n-2$ and $j\ge 2$, we shall see in the next section that no weakly continuous extension of $\phi_k^{r,s,j}$ to $\Kn$ is possible. 

The remaining case, $j=1$, is the subject of this section. We assume that $1\le k\le n-2$ and construct a weakly continuous extension of  $\phi_k^{r,s,1}$ from $\Pn$ to $\Kn$.

For this, we need some basic terminology and results of geometric measure theory, for which we refer to Federer's book \cite{Fed69}. We start by describing support measures in terms of currents. This point of view was first suggested and explored (for sets of positive reach) by Z\"ahle \cite{Zae86}. We shall need the normal cycle associated with a convex body. To introduce it, we  remark that $\Nor K\subset\R^n\times\R^n=\R^{2n}$, the normal bundle of $K$, is an $(n-1)$-rectifiable set. For $\mathcal{H}^{n-1}$-almost all $(x,u)\in\Nor K$, the set of $(\mathcal{H}^{n-1}\fed \Nor K,n-1)$ approximate tangent vectors at $(x,u)$ is an $(n-1)$-dimensional linear subspace of $\RR^{2n}$, which is denoted by 
$\textrm{Tan}^{n-1}(\mathcal{H}^{n-1}\fed \Nor K,(x,u))$. 
This {\em approximate tangent space} is spanned by the orthonormal basis $(a_1(x,u),\dots,a_{n-1}(x,u))$,  where   
$$
a_i(x,u):=\left( \frac{1}{\sqrt{1+{k_i(x,u)}^2}}\,b_i(x,u),\frac{k_i(x,u)}{\sqrt{1+{k_i(x,u)}^2}}\,b_i(x,u)\right).
$$
Here, $(b_1(x,u),\ldots,b_{n-1}(x,u))$ is a suitable orthonormal basis of $u^\perp$ (the orthogonal complement of the linear subspace spanned by $u$), which is chosen so that the basis  $(b_1(x,u),\ldots,b_{n-1}(x,u),u)$ has the same orientation as the standard basis $(e_1,\ldots,e_n)$ of $\R^n$, and $k_i(x,u)\in [0,\infty]$ for $i=1,\ldots,n-1$, with the understanding that
$$  \frac{1}{\sqrt{1+{k_i(x,u)}^2}}=0\quad\mbox{and}\quad\frac{k_i(x,u)}{\sqrt{1+{k_i(x,u)}^2}}=1 \qquad\mbox{if }k_i(x,u)=\infty.$$
Note that the dependence of $a_i,b_i,k_i$ on $K$ is not made explicit by our notation. The data $b_i, k_i$, $i=1,\ldots,n-1$, are essentially uniquely determined (cf.~\cite[Prop.~3 and Lem.~2]{RZ05}). Moreover, we can assume that $b_i(x+\varepsilon u,u)=b_i(x,u)$, independent of $\varepsilon>0$, where $(x,u)\in\Nor K$ and $(x+\varepsilon u,u)\in\Nor K_\varepsilon$  with $K_\varepsilon:=K+\varepsilon B^n$. See \cite{Zae86, RZ01, RZ05, Hug95, Hug98}  for a geometric interpretation of the numbers $k_i(x,u)$ as generalized curvatures and for arguments establishing these facts. Hence, for $\mathcal{H}^{n-1}$-almost all $(x,u)\in\Nor K$, we 
can define an $(n-1)$-vector
$$
a_K(x,u):=a_1(x,u)\wedge\ldots\wedge a_{n-1}(x,u),
$$
which defines an orientation of $\textrm{Tan}^{n-1}(\mathcal{H}^{n-1}\fed \Nor(K),(x,u))$. Then an $(n-1)$-dimensional current in 
$\R^{2n}$ is defined by 
$$
T_K:=\left(\mathcal{H}^{n-1}\fed\Nor K\right)\wedge a_K,
$$
which is known as the {\em normal cycle} of $K$. More explicitly, 
$$
T_K(\varphi)=\int_{\Nor K}\langle a_K(x,u),\varphi(x,u)\rangle\, \mathcal{H}^{n-1}(d(x,u)),
$$
for all $\mathcal{H}^{n-1}\fed \Nor K$-integrable functions $\varphi:\R^{2n}\to \bigwedge^{n-1}\R^{2n}$, where we write $\langle\cdot\,,\cdot\rangle$ for the pairing of $m$-vectors and $m$-covectors, as in \cite[p.~17]{Fed69} (but we continue to use $\langle\cdot\,,\cdot\rangle$ also for the scalar product in $\R^n$, which cannot lead to ambiguities). Here we use that $T_K$ is a rectifiable current, which has compact support, and thus $T_K$ can be defined for a larger class of functions than just for the class of smooth differential forms. 

In order to define the Lipschitz--Killing forms $\varphi_k$, $k\in\{0,\ldots,n-1\}$, we need the projection maps $\Pi_1:\R^n\times\R^n\to\R^n$, $(x,u)\mapsto x$, and $\Pi_2:\R^n\times\R^n\to\R^n$, $(x,u)\mapsto u$. Let $\Omega_n$ be  the volume form on $\R^n$ with the orientation chosen so that 
$$
\Omega_n(e_1,\ldots,e_n)=\langle e_1\wedge\ldots\wedge e_n,\Omega_n\rangle=1,
$$ 
where $(e_1,\dots,e_n)$ is the standard orthonormal basis of ${\mathbb R}^n$. Then differential forms $\varphi_k:\R^{2n} \to \bigwedge^{n-1}\R^{2n}$, $k\in\{0,\ldots,n-1\}$,  of degree $n-1$ on $\R^{2n}$ are defined by 
\begin{eqnarray*}
& & \varphi_k(x,u)(\xi_1,\ldots,\xi_{n-1})\\
&  & := \frac{1}{k!(n-1-k)!\omega_{n-k}}\sum_{\sigma\in S(n-1)}\sgn(\sigma)\left\langle
\bigwedge_{i=1}^k\Pi_1\xi_{\sigma(i)}\wedge \bigwedge_{i=k+1}^{n-1}\Pi_2\xi_{\sigma(i)}\wedge u,\Omega_n\right\rangle,
\end{eqnarray*}
where $(x,u)\in \R^n\times\R^n=\R^{2n}$, $\xi_1,\ldots,\xi_{n-1}\in \R^{2n}$, and $S(n-1)$ denotes the set of all permutations of $\{1,\ldots,n-1\}$. Note that this definition is equivalent to the one given in \cite{Zae86}. A straightforward calculation yields that 
$$
\langle a_K(x,u),\varphi_k(x,u)\rangle
=\frac{1}{\omega_{n-k}}\sum_{|I|=n-1-k}\frac{\prod_{i\in I}k_i(x,u)}{\prod_{i=1}^{n-1}\sqrt{1+k_i(x,u)^2}}
$$
for $\mathcal{H}^{n-1}$-almost all $(x,u)\in\Nor K$. The summation extends over all subsets $I$ of $\{1,\ldots,n-1\}$ of cardinality 
$n-1-k$. We adopt the convention that a product over an empty set is defined as 1. Then, for $\eta\in\mathcal{B}(\Sigma^n)$ we obtain
$$
T_K\left(\mathbf{1}_\eta {\varphi}_k \right)=\Lambda_k(K,\eta),
$$
which provides a representation of the $k$th support measure of $K$ in terms of the normal cycle of $K$, 
evaluated at the $k$th Lipschitz--Killing form $\varphi_k$.

Taking this procedure as a model, we now introduce new tensor-valued differential forms of degree $n-1$. By evaluating these forms at the normal cycle, we shall obtain the requested continuous extension of $\phi_k^{r,s,1}$ to general convex bodies. Let $n\ge 3$, $k\in \{1,\ldots,n-2\}$ and $r,s\in \N_0$.  Let $(x,u)\in \R^n\times\R^n=\R^{2n}$ and $\mathbf{v}=(v_1,\ldots,v_{r+s+2})\in (\R^n)^{r+s+2}$. For $\xi_1,\ldots,\xi_{n-1}\in\R^{2n}$, we define
\begin{eqnarray*}
& & \widetilde{\varphi}_k^{r,s}(x,u;\mathbf{v};\xi_1,\ldots,\xi_{n-1}):=\frac{C^{r,s}_{n,k}}{(k-1)!(n-1-k)!}\, x^r(v_1,\ldots,v_r)u^s(v_{r+1},\ldots,v_{r+s})\\
& & \times\sum_{\sigma\in S(n-1)}\sgn(\sigma)\left\langle v_{r+s+1},\Pi_1\xi_{\sigma(1)}\right\rangle 
\left\langle v_{r+s+2}\wedge\bigwedge_{i=2}^k\Pi_1\xi_{\sigma(i)}\wedge\bigwedge_{i=k+1}^{n-1}\Pi_2\xi_{\sigma(i)}\wedge u,\Omega_n\right\rangle .
\end{eqnarray*}
We omit the wedge product $\bigwedge_{i=2}^k$ if $k=1$.  For fixed $x,u,\mathbf{v}$, the map
$$
\widetilde{\varphi}_k^{r,s}(x,u;\mathbf{v};\cdot):(\R^{2n})^{n-1}\to\R
$$
is multilinear and alternating, and therefore it is an element of  $\bigwedge^{n-1}\R^{2n}$. Next, we symmetrize by defining
\begin{eqnarray*}
& & {\varphi}_k^{r,s}(x,u;\mathbf{v};\xi_1,\ldots,\xi_{n-1})\\
& & :=\frac{1}{(r+s+2)!}\sum_{\tau\in S(r+s+2)}
\widetilde{\varphi}_k^{r,s}(x,u;v_{\tau(1)},\ldots,v_{\tau(r+s+2)};\xi_1,\ldots,\xi_{n-1}).
\end{eqnarray*} 
Then the mapping 
$$(\xi_1,\dots,\xi_{n-1})\mapsto \varphi_k^{r,s}(x,u;\,\cdot\,;\xi_1,\dots,\xi_{n-1}),$$ 
which we denote briefly by ${\varphi}_k^{r,s}(x,u)$, is an $(n-1)$-covector of $\mathbb{T}^{r+s+2}$, hence an element of $\bigwedge^{n-1}(\R^{2n},\mathbb{T}^{r+s+2})$. Therefore, the map
$$
{\varphi}_k^{r,s}:\R^{2n}\to \bigwedge\nolimits^{n-1}(\R^{2n},\mathbb{T}^{r+s+2}),\qquad (x,u)\mapsto {\varphi}_k^{r,s}(x,u),
$$
is a differential form of degree $n-1$ on $\R^{2n}$ with coefficients in $\mathbb{T}^{r+s+2}$ (see \cite[p.~351]{Fed69}), that is, an element of $\mathcal{E}^{n-1}(\R^{2n},\mathbb{T}^{r+s+2}):=\mathcal{E} (\R^{2n},\bigwedge^{n-1}(\R^{2n},\mathbb{T}^{r+s+2}))$. In particular,
$$
\langle a,{\varphi}_k^{r,s}(x,u)\rangle\in \mathbb{T}^{r+s+2}
$$
for all $(x,u)\in \R^{2n}$ and $a\in \bigwedge_{n-1}\R^{2n}$, where we use the linear isomorphism mentioned in \cite[p. 17]{Fed69} to identify $ \bigwedge^{n-1}(\R^{2n},W)$ and $ {\rm Hom}\left(\bigwedge_{n-1}\R^{2n},W\right)$, for an arbitrary vector space $W$. We remark that a straightforward calculation shows that
\begin{equation}\label{eqcov}
\langle \vartheta a,{\varphi}_k^{r,s}(\vartheta x,\vartheta u)\rangle=\vartheta \langle  a,{\varphi}_k^{r,s}( x, u)\rangle,
\end{equation} 
for all $\vartheta\in {\rm O}(n)$, where in each case the natural operation of the rotation group is used (in particular, $\vartheta \xi:=(\vartheta p,\vartheta q)$ for $\xi=(p,q)\in\R^n\times\R^n =\R^{2n}$).

The next lemma shows that the differential forms $\varphi_k^{r,s}$ serve the intended purpose.

\begin{lemma}\label{lemma4.2} 
If $P\in\mathcal{P}^n$ and $\eta\in{\mathcal B}(\Sigma^n)$, then 
$$
T_P\left(\mathbf{1}_\eta {\varphi}_k^{r,s} \right)=\phi_k^{r,s,1}(P,\eta).
$$
\end{lemma}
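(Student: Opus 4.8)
The plan is a direct computation. Both sides of the asserted identity are $\T^{r+s+2}$-valued measures in $\eta$: the right-hand side by definition, and the left-hand side because $T_P=(\mathcal{H}^{n-1}\fed\Nor P)\wedge a_P$ is absolutely continuous with respect to $\mathcal{H}^{n-1}\fed\Nor P$, while $\mathbf 1_\eta\varphi_k^{r,s}$ may be paired with the compactly supported rectifiable current $T_P$ even though it is not smooth. Hence $T_P(\mathbf 1_\eta\varphi_k^{r,s})=\int_{\Nor P}\mathbf 1_\eta(x,u)\langle a_P(x,u),\varphi_k^{r,s}(x,u)\rangle\,\mathcal{H}^{n-1}(\D(x,u))$, and it suffices to identify the integrand $\mathcal{H}^{n-1}$-almost everywhere on $\Nor P$.

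First I would recall the face decomposition $\Nor P=\bigcup_{j=0}^{n-1}\bigcup_{F\in\F_j(P)}({\rm relint}\,F\times\nu(P,F))$, in which the piece over a $j$-face has dimension $j+(n-1-j)=n-1$, so that $\mathcal{H}^{n-1}$-almost every $(x,u)\in\Nor P$ lies over the relative interior of some face. For such $(x,u)$ over $F\in\F_j(P)$, the generalized curvatures $k_i(x,u)$ vanish in the $j$ directions tangent to $F$ and equal $\infty$ in the $n-1-j$ directions tangent to $\nu(P,F)$; hence, by the limiting conventions in the definition of the $a_i$, one has $a_P(x,u)=(b_1,0)\wedge\dots\wedge(b_j,0)\wedge(0,b_{j+1})\wedge\dots\wedge(0,b_{n-1})$, where $(b_1,\dots,b_j)$ is an orthonormal basis of $L(F)$, $(b_{j+1},\dots,b_{n-1})$ one of $L(F)^\perp\cap u^\perp$, and $(b_1,\dots,b_{n-1},u)$ is positively oriented. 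In particular $\Pi_1 a_i=b_i$, $\Pi_2 a_i=0$ for $i\le j$, whereas $\Pi_1 a_i=0$, $\Pi_2 a_i=b_i$ for $i>j$.

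The heart of the proof is the pointwise evaluation of $\langle a_P(x,u),\widetilde\varphi_k^{r,s}(x,u;\mathbf v)\rangle$ obtained by inserting $\xi_i=a_i$. In the sum over $\sigma\in S(n-1)$ a term survives only when $\sigma(1),\dots,\sigma(k)$ index ``position'' directions (so that $\Pi_1 a_{\sigma(i)}\neq0$) and $\sigma(k+1),\dots,\sigma(n-1)$ index ``normal'' directions; this is possible precisely when $j=k$, so only $k$-faces contribute. For $j=k$ one splits $\sigma$ into a permutation $\pi$ of $\{1,\dots,k\}$ and a permutation of $\{k+1,\dots,n-1\}$: bringing $\bigwedge_{i=k+1}^{n-1}b_{\sigma(i)}$ into canonical order produces a sign that cancels the corresponding factor in $\sgn(\sigma)$, so the sum over the second block contributes only the factor $(n-1-k)!$; and expanding $v_{r+s+2}$ in the basis $(b_1,\dots,b_{n-1},u)$ forces it to supply its $b_{\pi(1)}$-component — the unique vector missing from the remaining wedge — which yields a factor $\sgn(\pi)$ squaring with the $\sgn(\pi)$ already present. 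What remains is $(n-1-k)!\sum_{\pi\in S(k)}\langle v_{r+s+1},b_{\pi(1)}\rangle\langle v_{r+s+2},b_{\pi(1)}\rangle=(n-1-k)!(k-1)!\sum_{m=1}^k\langle v_{r+s+1},b_m\rangle\langle v_{r+s+2},b_m\rangle=(n-1-k)!(k-1)!\,Q_{L(F)}(v_{r+s+1},v_{r+s+2})$. The two factorials cancel the normalizing constant in $\widetilde\varphi_k^{r,s}$, so $\langle a_P(x,u),\widetilde\varphi_k^{r,s}(x,u;\mathbf v)\rangle=C_{n,k}^{r,s}\,x^r(v_1,\dots,v_r)\,u^s(v_{r+1},\dots,v_{r+s})\,Q_{L(F)}(v_{r+s+1},v_{r+s+2})$ whenever $(x,u)$ lies over a $k$-face $F$, and $0$ otherwise; symmetrizing over $S(r+s+2)$ turns the right-hand side into $C_{n,k}^{r,s}\,(Q_{L(F)}x^ru^s)(\mathbf v)$, by the definition of the symmetric tensor product.

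Finally, substituting this into the integral and applying Fubini's theorem — the tangent plane of ${\rm relint}\,F\times\nu(P,F)$ is the orthogonal product $L(F)\times(L(F)^\perp\cap u^\perp)$, so $\mathcal{H}^{n-1}$ restricted to this set equals $(\mathcal{H}^k\fed{\rm relint}\,F)\otimes(\mathcal{H}^{n-1-k}\fed\nu(P,F))$ — yields
\[
T_P(\mathbf 1_\eta\varphi_k^{r,s})=C_{n,k}^{r,s}\sum_{F\in\F_k(P)}Q_{L(F)}\int_F\int_{\nu(P,F)}\mathbf 1_\eta(x,u)\,x^ru^s\,\mathcal{H}^{n-1-k}(\D u)\,\mathcal{H}^k(\D x),
\]
which is exactly $\phi_k^{r,s,1}(P,\eta)$ by formula (\ref{2.6}). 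The main obstacle is the sign bookkeeping in the evaluation of $\langle a_P,\widetilde\varphi_k^{r,s}\rangle$: correctly identifying the orientation $a_P(x,u)$ over a polytopal face as the limit of the generic formula when the relevant generalized curvatures tend to infinity, and checking that every sign created by reordering the two blocks of $\sigma$ cancels in pairs.
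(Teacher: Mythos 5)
Your proposal is correct and follows essentially the same route as the paper: decompose $\Nor P$ over the relative interiors of faces, evaluate $\langle a_P,\widetilde{\varphi}_k^{r,s}\rangle$ pointwise (the paper does this via the ratio of generalized curvatures, you via the explicit limit form of $a_P$ over a face — the same computation), observe that only $k$-faces contribute, and let the permutation sum produce $(k-1)!(n-1-k)!\,Q_{L(F)}(v_{r+s+1},v_{r+s+2})$, cancelling the normalizing constant, before symmetrizing and splitting $\mathcal{H}^{n-1}$ as a product measure. The sign cancellation you track via the two blocks of $\sigma$ is exactly the paper's identity $\left\langle v_{r+s+2}\wedge\bigwedge_{i=2}^{n-1}b_{\sigma(i)}\wedge u,\Omega_n\right\rangle=\langle v_{r+s+2},b_{\sigma(1)}\rangle\sgn(\sigma)$, so no gap remains.
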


\begin{proof} For given $P\in\mathcal{P}^n$ and $\eta\in{\mathcal B}(\Sigma^n)$, we have to show that
\begin{eqnarray*}
T_P\left(\mathbf{1}_{\eta} \widetilde{\varphi}_k^{r,s}(\cdot\,;\mathbf{v};\cdot) \right)&=&C^{r,s}_{n,k}\sum_{F\in\mathcal{F}_k(P)} Q_{L(F)}(v_{r+s+1},v_{r+s+2})\\
& & \times\,  \int_{\eta\cap(F\times\nu(P,F))} x^r(v_1,\ldots,v_r)u^s(v_{r+1},\ldots,v_{r+s})
\,\mathcal{H}^{n-1}(\D(x,u)),
\end{eqnarray*}
for all $\mathbf{v}=(v_1,\ldots,v_{r+s+2})\in (\R^n)^{r+s+2}$. Subsequent symmetrization then yields the result, in view of (\ref{2.6}).

To see this, we use the disjoint decomposition
$$
\eta\cap \Nor P=\bigcup_{j=0}^{n-1}\bigcup_{F\in\mathcal{F}_j(P)} \eta\cap(\textrm{relint }F\times\nu(P,F))
$$
to get
\begin{eqnarray*}
& & T_P\left(\mathbf{1}_{\eta} \widetilde{\varphi}_k^{r,s}(\cdot\,;\mathbf{v};\cdot)\right)\\
& & =\int_{\eta\cap\Nor P} \left\langle a_P(x,u), \widetilde{\varphi}_k^{r,s}(x,u;\mathbf{v};\cdot)\right\rangle  \mathcal{H}^{n-1}(\D (x,u)) \allowdisplaybreaks\\
& & = \frac{C^{r,s}_{n,k}}{(k-1)!(n-1-k)!}\sum_{j=0}^{n-1} \sum_{F\in\mathcal{F}_j(P)} \int_{\eta\cap(F\times \nu(P,F)) }
x^r(v_1,\ldots,v_r)u^s(v_{r+1},\ldots,v_{r+s})\\
& & \hspace{4mm} \times \sum_{\sigma \in S(n-1)}\sgn(\sigma)\frac{\prod_{i=k+1}^{n-1}k_{\sigma(i)}(x,u)} {\prod_{i=1}^{n-1}\sqrt{1+k_i(x,u)^2}}\left\langle v_{r+s+1},b_{\sigma(1)}(x,u)\right\rangle \\
& & \hspace{4mm}\times \left\langle v_{r+s+2}\wedge\bigwedge_{i=2}^{n-1}b_{\sigma(i)}(x,u)\wedge u,\Omega_n\right\rangle\mathcal{H}^{n-1}(\D(x,u)) \allowdisplaybreaks\\
& & =\frac{C^{r,s}_{n,k}}{(k-1)!(n-1-k)!}\sum_{j=0}^{n-1}
\sum_{F\in\mathcal{F}_j(P)} \int_{\eta\cap(F\times \nu(P,F)) }x^r(v_1,\ldots,v_r)u^s(v_{r+1},\ldots,v_{r+s})\\
& &\hspace{4mm}\times \sum_{\sigma\in S(n-1)}\frac{\prod_{i=k+1}^{n-1}k_{\sigma(i)}(x,u)} {\prod_{i=1}^{n-1} \sqrt{1+k_i(x,u)^2}}\, b_{\sigma(1)}(x,u)^2(v_{r+s+1},v_{r+s+2})\,\mathcal{H}^{n-1}(\D(x,u)).
\end{eqnarray*}
In the final step we have used that $(b_1(x,u),\dots,b_{n-1}(x,u),u)$ is an orthonormal basis of $\R^n$ with the same orientation as the standard basis and hence
$$ \left\langle v_{r+s+2}\wedge\bigwedge_{i=2}^{n-1}b_{\sigma(i)}(x,u)\wedge u,\Omega_n\right\rangle =\left\langle v_{r+s+2},b_{\sigma(1)}(x,u)\right\rangle\sgn(\sigma).$$
If $F\in\mathcal{F}_j(P)$, then, for $\mathcal{H}^{n-1}$-almost all $(x,u)\in F\times\nu(P,F)$,  exactly $j$ 
of the numbers $k_i(x,u)$ are zero and $n-1-j$ of these numbers are infinite. Moreover, in this situation, $k_i(x,u)=0$ if and only if 
the corresponding vector $b_i(x,u)$ is in $L(F)$. Hence, if $j\neq k$, then 
$$
\frac{\prod_{i=k+1}^{n-1}k_{\sigma(i)}(x,u)}{\prod_{i=1}^{n-1}\sqrt{1+k_i(x,u)^2}}=0\quad \textrm{for all }\sigma\in S(n-1).
$$
In fact, if $j>k$, then the numerator is zero, and for $j<k$ the number of indices  $i\in\{1,\ldots,n-1\}$ such that $k_i(x,u)=\infty$ in the numerator is smaller than the corresponding number of indices in the denominator. If $F\in{\mathcal F}_k(P)$, $(x,u)\in F\times\nu(P,F)$ and, say, $b_1(x,u),\dots,b_k(x,u)\in L(F)$, then
$$ Q_{L(F)}=b_1(x,u)^2+\dots+ b_k(x,u)^2.$$
Hence, we conclude that
\begin{align*}
&T_P\left(\mathbf{1}_{\eta} \widetilde{\varphi}_k^{r,s}(\cdot\,;\mathbf{v};\cdot)\right)\\
&=\frac{C^{r,s}_{n,k}}{(k-1)!(n-1-k)!} \sum_{F\in\mathcal{F}_k(P)} (k-1)!(n-1-k)!Q_{L(F)}(v_{r+s+1},v_{r+s+2})\\
&\hspace{4mm}\times \int_{\eta\cap(F\times \nu(P,F))}x^r(v_1,\ldots,v_r)u^s(v_{r+1},\ldots,v_{r+s})\,\mathcal{H}^{n-1} (\D(x,u)),
\end{align*}
as stated.
\end{proof}

\begin{lemma}\label{Lemma4.1}
{\rm (a)} For $K\in\Kn$, $T_K$ is a cycle.\\
{\rm (b)} The map $K\mapsto T_K$ is a valuation on $\Kn$.\\
{\rm (c)} If $K_i,K\in\Kn$, $i\in\N$, and $ K_i\to K$ in the Hausdorff metric, as $i\to\infty$, then $T_{K_i}\to T_K$ in the dual flat seminorm  for currents. 
\end{lemma}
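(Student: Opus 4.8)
The plan is to establish the three parts essentially by reducing to known facts about the normal cycle from geometric measure theory and the theory of sets of positive reach, as developed by Z\"ahle, Rataj and Z\"ahle, and Fu. For part (a), I would argue that $T_K$ has no boundary by invoking the fact that a convex body has positive reach (indeed, reach $=\infty$), so that $\Nor K$ and its associated current fit into the framework of Z\"ahle \cite{Zae86}; there it is shown that the normal cycle of a set of positive reach is a closed integral current, i.e. $\partial T_K = 0$. Alternatively, one can observe that for $\varepsilon>0$ the outer parallel body $K_\varepsilon = K+\varepsilon B^n$ has a $C^{1,1}$ boundary, its normal cycle is (up to the obvious diffeomorphism $(x,u)\mapsto(x+\varepsilon u,u)$ between $\Nor K$ and $\Nor K_\varepsilon$) the integration current over the oriented manifold $\partial K_\varepsilon$ lifted by the Gauss map, which is a cycle by Stokes' theorem, and then let $\varepsilon\to 0$ using part (c). I expect part (a) to be the least troublesome, since it is really a citation.

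For part (b), the valuation property, the key point is the additivity relation $T_{K\cup M} + T_{K\cap M} = T_K + T_M$ whenever $K,M,K\cup M\in\Kn$. I would prove this by comparing the normal bundles: for such $K,M$ one has the set-theoretic relations governing $\Nor(K\cup M)$, $\Nor(K\cap M)$, $\Nor K$ and $\Nor M$ (the relevant identities, which hold $\mathcal H^{n-1}$-almost everywhere on $\Sigma^n$ and respect the approximate tangent planes and their orientations, are exactly those underlying the additivity of the support measures $\Lambda_k$, cf.\ \cite[Sec.~4.2]{Sch14}). Concretely, I would use that the indicator identity $\mathbf 1_{\Nor(K\cup M)} + \mathbf 1_{\Nor(K\cap M)} = \mathbf 1_{\Nor K} + \mathbf 1_{\Nor M}$ holds $\mathcal H^{n-1}$-a.e.\ on $\Sigma^n$ together with the fact that at a.e.\ common support element the four bodies share the same approximate tangent plane with the same orientation, so that the rectifiable currents add accordingly. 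This is the analogue of the inclusion--exclusion argument for curvature and support measures; the normal cycle was in fact constructed by Fu precisely so as to be additive. The main obstacle here is handling the $\mathcal H^{n-1}$-null sets of ``bad'' support elements carefully and checking that orientations match, but this is exactly the content of the classical proofs for support measures and can be cited or adapted.

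For part (c), the continuity in the flat seminorm, I would use the standard compactness/closure machinery for integral currents (the Federer--Fleming compactness theorem, \cite[4.2.17]{Fed69}). First, the normal cycles $T_{K_i}$ have uniformly bounded masses $\mathbf M(T_{K_i})$ and uniformly bounded supports, because $K_i\to K$ implies the $K_i$ lie in a fixed large ball, and the total mass of the normal cycle is controlled by the total support measure $\mathbf M(T_K) \le c\sum_{k=0}^{n-1}\Lambda_k(K,\Sigma^n) \le c'\max_k V_k(K)$, which is bounded along a convergent sequence by continuity of the intrinsic volumes. Hence $(T_{K_i})$ is a sequence of integral currents of bounded mass and support; by compactness a subsequence converges in the flat norm to an integral current $T$. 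To identify $T = T_K$, I would test against smooth differential forms: for the Lipschitz--Killing forms one has $T_{K_i}(\mathbf 1_\eta \varphi_k) = \Lambda_k(K_i,\eta) \to \Lambda_k(K,\eta) = T_K(\mathbf 1_\eta\varphi_k)$ by weak continuity of support measures, and more generally the weak continuity of all support measures pins down the limit on a sufficiently rich class of forms. A cleaner route, which I would prefer, is to quote the result directly: the continuity of $K\mapsto T_K$ in the flat seminorm (equivalently, the local flat convergence of normal cycles under Hausdorff convergence) is established in the work on normal cycles of sets of positive reach --- see Z\"ahle \cite{Zae86}, Rataj--Z\"ahle \cite{RZ01, RZ05} --- and for convex bodies it also follows from the approximation $K\approx K_\varepsilon$ by smooth bodies together with the explicit structure of $T_K$ in terms of the generalized curvatures $k_i$. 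The subtlety to watch is that flat convergence is weaker than mass convergence, so one cannot expect $\mathbf M(T_{K_i})\to\mathbf M(T_K)$; only the flat-seminorm statement is claimed and only that is needed for the subsequent weak-continuity arguments in Section \ref{sec4}.
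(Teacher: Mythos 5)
Your proposal is correct and takes essentially the same route as the paper: the paper's entire proof of this lemma is a citation to Rataj--Z\"ahle \cite{RZ01} in the more general setting of sets with positive reach (Proposition 2.6 for (a), Theorem 2.2 for (b), Theorem 3.1 for (c)), which is exactly the ``quote the literature'' option you designate as your preferred route for each part. Your supplementary self-contained sketches are broadly sound, but note that in the compactness argument for (c) the identification of the flat limit needs more than testing against the Lipschitz--Killing forms (which span far too few form directions to determine an integral current); one would additionally use that the limit is a cycle supported on $\Nor K$, a connected oriented Lipschitz manifold, and then a constancy-type argument to conclude it is an integer multiple of $T_K$, the multiple being fixed by one such evaluation.
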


\begin{proof} These facts are provided, for instance, in the more general context of sets with positive reach as Proposition 2.6 (assertion (a)), Theorem 2.2 (assertion (b)) and Theorem 3.1 (assertion (c)) in \cite{RZ01}.
\end{proof}

We remark that a strengthened form of the continuity assertion (c) of Lemma \ref{Lemma4.1}, namely local H\"older continuity of the normal cycles of convex bodies with respect to the Hausdorff metric and the dual flat seminorm, is proved in \cite{HS13}.

The third statement of Lemma \ref{Lemma4.1} implies that if $f:\R^{2n}\to\R$ is of class $C^\infty$, then the map
$$
\Kn\to\R,\qquad K\mapsto T_K\left(f {\varphi}_k^{r,s}  \right),
$$
is continuous. But then the same is true if $f$ is merely continuous, and thus $(K,\eta)\mapsto T_K\left(\mathbf{1}_\eta {\varphi}_k^{r,s} \right)$ is the weakly continuous 
extension of $(P,\eta)\mapsto \phi_k^{r,s,1}(P,\eta)$ from polytopes $P$ to general 
convex bodies. 

\begin{theorem}\label{Theorem4.1} 
The map $\Kn\times\B(\Sigma^n)\to\T^{r+s+2}$ defined by $(K,\eta)\mapsto T_K\left(\mathbf{1}_\eta \varphi_k^{r,s}\right)$, satisfies the properties {\rm (a) -- (d)} listed in Theorem $\ref{Theorem2.3}$.
\end{theorem}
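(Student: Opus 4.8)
The plan is to abbreviate $\Gamma(K,\eta):=T_K(\mathbf 1_\eta\varphi_k^{r,s})$, a mapping $\Kn\times\B(\Sigma^n)\to\T^p$ with $p=r+s+2$, and to verify the four properties in turn. Property (a) I would read off the integral representation of the normal cycle on bounded Borel functions: since $T_K=(\mathcal{H}^{n-1}\fed\Nor K)\wedge a_K$ with $|a_K|=1$ $\mathcal{H}^{n-1}$-almost everywhere, $\mathcal{H}^{n-1}(\Nor K)<\infty$, and the coefficients of $\varphi_k^{r,s}$ are bounded on the compact set $\Nor K$ (the generalized-curvature factors lie in $[0,1]$, $\|u\|=1$, and $x$ stays bounded), the map $\eta\mapsto\Gamma(K,\eta)=\int_{\eta\cap\Nor K}\langle a_K(x,u),\varphi_k^{r,s}(x,u)\rangle\,\mathcal{H}^{n-1}(\D(x,u))$ is the indefinite integral of a fixed $\mathcal{H}^{n-1}$-integrable $\T^p$-valued function with respect to the finite measure $\mathcal{H}^{n-1}\fed\Nor K$, hence a $\T^p$-valued measure. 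For (d) there is nothing further to do: it is precisely what the discussion preceding the theorem extracts from Lemma \ref{Lemma4.1}(c) --- flat convergence $T_{K_i}\to T_K$ pairs continuously with smooth forms $f\varphi_k^{r,s}$, and, using the uniform bound on the masses $\mathbf M(T_{K_i})$ along a Hausdorff-convergent sequence, with merely continuous $f$ as well.

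For (b) I would transport the covariance from polytopes, where by Lemma \ref{lemma4.2} the mapping $\Gamma$ coincides with the known mapping $\phi_k^{r,s,1}$, using the weak continuity just recorded. On $\Pn$, $\phi_k^{r,s,1}$ is rotation covariant --- this is visible directly from the representation (\ref{2.6}), or follows from (\ref{eqcov}) together with the ${\rm O}(n)$-equivariance of the normal cycle, the sign produced by an orientation-reversing $\vartheta$ cancelling the one $\Omega_n$ contributes in $\varphi_k^{r,s}$. Approximating $K\in\Kn$ by polytopes $P_m\to K$, hence $\vartheta P_m\to\vartheta K$, and passing the identity of measures $\Gamma(\vartheta P_m,\vartheta\eta)=\vartheta\Gamma(P_m,\eta)$ to the limit (weak convergence is preserved under the fixed homeomorphism induced by $\vartheta$) gives rotation covariance on $\Kn$. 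For translation covariance I note that the only $x$-dependence of $\varphi_k^{r,s}$ sits in the factor $x^r(v_1,\dots,v_r)$; since $\Gamma=\phi_k^{r,s,1}$ on $\Pn$, formula (\ref{n3.1}) yields $\Gamma(P+t,\eta+t)=\sum_{i=0}^r\phi_k^{r-i,s,1}(P,\eta)\,t^i/i!$, and each $\phi_k^{r-i,s,1}$ has, by the same construction, the weakly continuous extension $\Gamma^{(i)}$ to $\Kn$ with $\Gamma^{(i)}(K,\eta)=T_K(\mathbf 1_\eta\varphi_k^{r-i,s})$. Applying the approximation argument to $P_m\to K$ and $P_m+t\to K+t$ then gives $\Gamma(K+t,\eta+t)=\sum_{i=0}^r\Gamma^{(i)}(K,\eta)\,t^i/i!$ for all $K\in\Kn$, $\eta\in\B(\Sigma^n)$ and $t\in\R^n$. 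Hence $\Gamma$ is translation covariant of degree $r\le p$ and rotation covariant, i.e.\ isometry covariant.

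For (c) I would argue directly from the construction. Let $K,K'\in\Kn$ and $\eta\in\B(\Sigma^n)$ with $A:=\eta\cap\Nor K=\eta\cap\Nor K'$; then $\Gamma(K,\eta)=\int_A\langle a_K,\varphi_k^{r,s}\rangle\,\D\mathcal{H}^{n-1}$ and $\Gamma(K',\eta)=\int_A\langle a_{K'},\varphi_k^{r,s}\rangle\,\D\mathcal{H}^{n-1}$, so it suffices to show that $a_K=a_{K'}$ holds $\mathcal{H}^{n-1}$-almost everywhere on $A$. Since $A$ is an $\mathcal{H}^{n-1}$-measurable subset of the $(n-1)$-rectifiable set $\Nor K$, for $\mathcal{H}^{n-1}$-almost every $(x,u)\in A$ the approximate tangent space $\textrm{Tan}^{n-1}(\mathcal{H}^{n-1}\fed A,(x,u))$ exists and equals $\textrm{Tan}^{n-1}(\mathcal{H}^{n-1}\fed\Nor K,(x,u))$, and the same holds with $\Nor K'$ in place of $\Nor K$. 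As the left-hand side is intrinsic to $A$, the two $(n-1)$-dimensional tangent spaces coincide $\mathcal{H}^{n-1}$-almost everywhere on $A$; and by the essential uniqueness of the data $b_i(x,u),k_i(x,u)$ recalled in Section \ref{sec4}, which are determined by this tangent space together with $u$, so are the vectors $a_i(x,u)$ and the $(n-1)$-vector $a_K(x,u)=a_1(x,u)\wedge\dots\wedge a_{n-1}(x,u)$. Therefore $a_K=a_{K'}$ almost everywhere on $A$, and $\Gamma(K,\eta)=\Gamma(K',\eta)$.

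I expect (c) to be the main obstacle: recognizing $a_K$ as a quantity governed only by the local structure of $\Nor K$ --- so that it agrees almost everywhere with $a_{K'}$ on the shared set $A$ --- rests on the geometric-measure-theoretic fact that almost everywhere on a measurable subset the approximate tangent space of a rectifiable set equals that of the subset, combined with the uniqueness of the generalized curvature data. Tracking the lower-degree components $\Gamma_{p-j}$ in the translation covariance also calls for a little bookkeeping, but the reduction to polytopes via Lemma \ref{lemma4.2} keeps it routine, and the potential sign subtleties in the rotation covariance for orientation-reversing isometries cancel out.
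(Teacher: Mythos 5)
Your proposal is correct and follows essentially the same route as the paper: (a) via integrability of the bounded integrand against the finite measure $\mathcal{H}^{n-1}\fed\Nor K$, (b) by transporting the isometry covariance of $\phi_k^{r,s,1}$ from polytopes through weak continuity (with \eqref{eqcov} as the direct alternative), (d) from Lemma \ref{Lemma4.1}(c), and (c) by observing that $\eta\cap\Nor K=\eta\cap\Nor K'$ is $(n-1)$-rectifiable and that the approximate tangent spaces and orientations, hence $a_K$ and $a_{K'}$, agree $\mathcal{H}^{n-1}$-almost everywhere on it. The extra bookkeeping you supply for the translation covariance via \eqref{n3.1} is exactly how the paper handles it as well.
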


\begin{proof} (a) Since $\Nor K$ has finite $(n-1)$-dimensional Hausdorff measure, the measure property follows from 
the dominated convergence theorem. (b) is implied since $\phi_k^{r,s,1}$ is isometry covariant on polytopes and this property is preserved under weak convergence. ((b) can also be shown directly. The rotation covariance follows, for instance, from \eqref{eqcov}.) (d) is implied by Lemma \ref{Lemma4.1} (c). It remains to verify that the tensor-valued measure is locally defined. For this, let $K,K'\in\Kn$ and $\eta\in\B(\Sigma^n)$ be such that $\eta\cap\Nor K=\eta\cap\Nor K'$. With $\Nor K$ and $\Nor K'$ also $\eta\cap\Nor K\cap \Nor K'$ is $(n-1)$-rectifiable. Therefore, for $\mathcal{H}^{n-1}$-almost all $(x,u)\in  \eta\cap\Nor K\cap \Nor K'$ the approximate tangent space of this intersection coincides with 
the one of $\Nor K$ and the one of $\Nor K'$ at $(x,u)$, and the orientations coincide. Hence we have $a_K(x,u)=a_{K'}(x,u)$ for $\mathcal{H}^{n-1}$-almost all $(x,u)\in \eta\cap\Nor K=\eta\cap\Nor K'$, which yields the assertion. 
\end{proof}

\begin{corollary}\label{Corollary4.1}
Let $r,s\in\N_0$ and $k\in\{1,\ldots,n-2\}$. Then, for each $\eta\in
\mathcal{B}(\Sigma^n)$, the map $K\mapsto \phi_k^{r,s,1}(K,\eta)$
is additive (a valuation) and Borel measurable on $\Kn$.
\end{corollary}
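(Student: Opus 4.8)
My plan rests on the representation, established in the discussion preceding Theorem~\ref{Theorem4.1}, that the extension of $\phi_k^{r,s,1}$ to $\Kn$ is $\phi_k^{r,s,1}(K,\eta)=T_K(\mathbf 1_\eta\varphi_k^{r,s})$, together with Theorem~\ref{Theorem4.1}(a), which gives that $\mu_K:=\phi_k^{r,s,1}(K,\cdot)$ is a $\T^{r+s+2}$-valued measure for each $K\in\Kn$. Since $\mu_K(\eta)=\int_{\eta\cap\Nor K}\langle a_K,\varphi_k^{r,s}\rangle\,\D\Ha^{n-1}$ with $\Ha^{n-1}(\Nor K)<\infty$ and a bounded integrand, every coordinate of $\mu_K$ is a finite signed Borel measure on $\Sigma^n$, concentrated on the compact set $\Nor K$. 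Thus two things have to be shown: for each fixed $\eta\in\mathcal B(\Sigma^n)$, the map $K\mapsto\mu_K(\eta)$ is Borel measurable, and it is a valuation.

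For measurability I would argue coordinatewise and exploit weak continuity. By Theorem~\ref{Theorem4.1}(d), for every continuous $f\colon\Sigma^n\to\R$ the map $K\mapsto\int_{\Sigma^n}f\,\D\mu_K$ is continuous on $\Kn$ (sequential continuity suffices, $\Kn$ being metric). For open $\eta$, picking continuous $f_m$ with $0\le f_m\uparrow\mathbf 1_\eta$ and using finiteness of $\mu_K$, dominated convergence gives $\mu_K(\eta)=\lim_m\int f_m\,\D\mu_K$ coordinatewise, so $K\mapsto\mu_K(\eta)$ is a pointwise limit of continuous functions, hence Borel. The family $\mathcal D$ of all $\eta\in\mathcal B(\Sigma^n)$ for which $K\mapsto\mu_K(\eta)$ is Borel therefore contains all open sets; since each $\mu_K$ is a finite signed measure, $\mathcal D$ is closed under proper set differences and under increasing countable unions, i.e.\ a Dynkin system, and the open sets form an intersection-stable generator of $\mathcal B(\Sigma^n)$. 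Dynkin's theorem then yields $\mathcal D=\mathcal B(\Sigma^n)$.

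For the valuation property, let $K,M\in\Kn$ with $K\cup M\in\Kn$. By Lemma~\ref{Lemma4.1}(b), $K\mapsto T_K$ is a valuation on $\Kn$, i.e.\ $T_{K\cup M}+T_{K\cap M}=T_K+T_M$ as currents in $\R^{2n}$. One would like to evaluate this identity at $\mathbf 1_\eta\varphi_k^{r,s}$ directly, but that form is not smooth; since the non-smoothness is confined to the factor $\mathbf 1_\eta$ while $\varphi_k^{r,s}$ itself is smooth, I would instead test against smooth scalar functions. For $f\in C_c^\infty(\Sigma^n)$, extend $f$ to a smooth compactly supported function on $\R^{2n}$ (possible since $\R^n\times\Sn$ is a smooth submanifold of $\R^{2n}$); then $f\varphi_k^{r,s}$ is a smooth compactly supported $(n-1)$-form, and since $T_K$ is carried by $\Nor K\subset\Sigma^n$ one has $T_K(f\varphi_k^{r,s})=\int_{\Sigma^n}f\,\D\mu_K$, independently of the extension. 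Hence
\[
\int_{\Sigma^n} f\,\D\mu_{K\cup M}+\int_{\Sigma^n} f\,\D\mu_{K\cap M}=\int_{\Sigma^n} f\,\D\mu_K+\int_{\Sigma^n} f\,\D\mu_M
\]
for all such $f$. As $C_c^\infty(\Sigma^n)$ is supremum-norm dense in $C_c(\Sigma^n)$ and the four measures are finite, this identity persists for all $f\in C_c(\Sigma^n)$; since finite signed Borel measures on $\Sigma^n$ are determined by their integrals against $C_c(\Sigma^n)$ (Riesz representation), we obtain $\mu_{K\cup M}+\mu_{K\cap M}=\mu_K+\mu_M$ coordinatewise, hence as $\T^{r+s+2}$-valued measures. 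Evaluating at $\eta\in\mathcal B(\Sigma^n)$ is precisely the valuation identity for $\phi_k^{r,s,1}(\cdot,\eta)$.

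I expect the crux to be this last upgrade: the additivity of the normal cycle is available a priori only as an identity of currents, i.e.\ against smooth test forms, whereas the corollary concerns the value of that current on the genuinely non-smooth form $\mathbf 1_\eta\varphi_k^{r,s}$. The bridge is the observation that it suffices to compare the induced finite tensor-valued measures on $\Sigma^n$, and that these are already determined by their integrals against smooth functions. A more hands-on alternative would be to translate $T_{K\cup M}+T_{K\cap M}=T_K+T_M$ into a pointwise $\Ha^{n-1}$-almost-everywhere identity among the oriented approximate tangent $(n-1)$-vectors $a_K,a_M,a_{K\cup M},a_{K\cap M}$ on their common $(n-1)$-rectifiable carrier $\Nor K\cup\Nor M$, and then to integrate $\langle\,\cdot\,,\mathbf 1_\eta\varphi_k^{r,s}\rangle$ against it; both routes rely on the same rectifiability facts used in the proof of Theorem~\ref{Theorem4.1}.
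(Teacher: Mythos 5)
Your proof is correct and follows essentially the same route as the paper: the valuation property comes from the additivity of the normal cycle (Lemma \ref{Lemma4.1}(b)) combined with the representation $\phi_k^{r,s,1}(K,\eta)=T_K(\mathbf 1_\eta\varphi_k^{r,s})$, and measurability comes from the continuity of $K\mapsto\int f\,\D\phi_k^{r,s,1}(K,\cdot)$ for continuous $f$ (Lemma \ref{Lemma4.1}(c)). The only differences are that you spell out explicitly the passage from the current identity on smooth forms to the induced tensor measures (which the paper treats as immediate from the rectifiable-current representation) and that your Dynkin-system argument re-proves the standard measurability lemma that the paper simply cites as \cite[Lem.~12.1.1]{SW08}.
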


\begin{proof}
Since $K\mapsto T_K$ is a valuation on $\Kn$ by Lemma \ref{Lemma4.1} (b),
the first assertion follows from
$\phi_k^{r,s,1}(K,\eta)=T_K\left(\mathbf{1}_\eta {\varphi}_k^{r,s} \right)$
for $K\in\Kn$.

Let $f:\Sigma^n\to\R$ be a continuous function with compact support. Then
$$
K\mapsto \int_{\Sigma^n} f(x,u)\, \phi_k^{r,s,1}(K,\D (x,u)),\qquad K\in\Kn,
$$
is continuous by Lemma \ref{Lemma4.1} (c), and therefore measurable. The
second assertion is then implied by \cite[Lem.~12.1.1]{SW08}.
\end{proof}

\begin{remark}\label{Remarksec4} {\rm Since the global functionals $\phi_k^{r,s,1}(P,\Sigma^n)$ are continuous, Alesker's characterization theorem must yield  a representation for them. Such a representation was explicitly known before. In fact, for $r=0$ it follows from a relation of McMullen \cite[p.~269]{McM97} (see also \cite[Lem.~3.3]{HSS08b}) that
$$ \phi_k^{0,s,1}(P,\Sigma^n) = Q\Phi_k^{0,s}(P) -2\pi (s+2)\, \Phi_k^{0,s+2}(P).$$
The general case is provided by the second and the third displayed formula in \cite[p.~505]{HSS08b}.}
\end{remark}

Finally in this section, we express the new local tensor valuations $\phi^{r,s,1}_k(K,\cdot)$ for a general convex body $K$ in terms of the generalized curvatures $k_i(x,u)$ and the corresponding principal directions of curvature, given by the unit vectors $b_i(x,u)$, $i=1,\ldots, n-1$ (the result is relation (\ref{nn}) below). We put   
$$
\mathbb{K}(x,u):=\prod_{i=1}^{n-1}\sqrt{1+k_i(x,u)^2}.
$$ 
Similarly as in the beginning of the proof of Lemma \ref{lemma4.2} and with the notations introduced there, we obtain that
\begin{eqnarray*}
& & T_K\left(\mathbf{1}_{\eta} \widetilde{\varphi}_k^{r,s}(\cdot\,;\mathbf{v};\cdot)\right)\\
& & =\int_{\eta\cap\Nor K} \left\langle a_K(x,u), \widetilde{\varphi}_k^{r,s}(x,u;\mathbf{v};\cdot)\right\rangle  \mathcal{H}^{n-1}(\D (x,u))\\ 
& & = \frac{C^{r,s}_{n,k}}{(k-1)!(n-1-k)!}\int_{\eta\cap\Nor K}x^r(v_1,\ldots,v_r)u^s(v_{r+1},\ldots,v_{r+s}) \\
& & \hspace{4mm} \times \sum_{\sigma \in S(n-1)}\sgn(\sigma)\frac{\prod_{i=k+1}^{n-1}k_{\sigma(i)}(x,u)} {\mathbb{K}(x,u)}\left\langle v_{r+s+1},b_{\sigma(1)}(x,u)\right\rangle \\
& & \hspace{4mm}\times \left\langle v_{r+s+2},b_{\sigma(1)}(x,u)\right\rangle\sgn(\sigma)\, \mathcal{H}^{n-1}(\D(x,u))
\allowdisplaybreaks\\
& & = \frac{C^{r,s}_{n,k}}{(k-1)!(n-1-k)!}\int_{\eta\cap\Nor K}x^r(v_1,\ldots,v_r)u^s(v_{r+1},\ldots,v_{r+s}) \\
& & \hspace{4mm}\times \sum_{\sigma \in S(n-1)}b_{\sigma(1)}(x,u)^2(v_{r+s+1},v_{r+s+2})\frac{\prod_{i=k+1}^{n-1} k_{\sigma(i)}(x,u)}{\mathbb{K}(x,u)}\,\mathcal{H}^{n-1}(\D(x,u)) \allowdisplaybreaks\\
& & = C^{r,s}_{n,k}\int_{\eta\cap\Nor K}x^r(v_1,\ldots,v_r)u^s(v_{r+1},\ldots,v_{r+s})\\
& & \hspace{4mm}\times \sum_{i=1}^{n-1}b_{i}(x,u)^2(v_{r+s+1},v_{r+s+2})\sum_{|I|=n-1-k \atop i\notin I} \frac{\prod_{j\in I} k_{j}(x,u)}{\mathbb{K}(x,u)}\,\mathcal{H}^{n-1}(\D(x,u)).
\end{eqnarray*}
From this we deduce that
\begin{eqnarray}\label{nn}
& & \phi^{r,s,1}_k(K,\eta)\\
& & =C^{r,s}_{n,k}\int_{\eta\cap\Nor K}x^ru^s\sum_{i=1}^{n-1}b_{i}(x,u)^2
\sum_{|I|=n-1-k\atop i\notin I}\frac{\prod_{j\in I} k_{j}(x,u)}{\mathbb{K}(x,u)}\,\mathcal{H}^{n-1}(\D(x,u)).\nonumber
\end{eqnarray}

If $k=1$, then 
$$
\phi^{r,s,1}_1(K,\eta)=C^{r,s}_{n,1}\int_{\eta\cap\Nor K} x^ru^s \sum_{i=1}^{n-1}b_{i}(x,u)^2 \frac{\prod_{j:j\neq i} k_{j}(x,u)}{\mathbb{K}(x,u)}\,\mathcal{H}^{n-1}(\D(x,u)),
$$
and for $k=n-2$, we have
$$
\phi^{r,s,1}_{n-2}(K,\eta)=C^{r,s}_{n,n-2}\int_{\eta\cap\Nor K}x^ru^s\sum_{i=1}^{n-1}b_{i}(x,u)^2\sum_{j:j\neq i}
\frac{k_{j}(x,u)} {\mathbb{K}(x,u)}\,\mathcal{H}^{n-1}(\D(x,u)).
$$

For $n=3$, these two special cases coincide and we get
$$
\phi^{r,s,1}_{1}(K,\eta)=C^{r,s}_{3,1}\int_{\eta\cap\Nor K} x^ru^s \frac{k_1(x,u)b_{2}(x,u)^2+k_2(x,u)b_{1}(x,u)^2}
{\mathbb{K}(x,u)}\,\mathcal{H}^{2}(\D(x,u)).
$$

For a convex body of class $C^2$, we write $u_x$ for the unique exterior unit normal of $K$ at the boundary point $x\in\partial K$ of $K$. An application of the coarea 
formula (together with Lemma 3.1 from \cite{Hug98b}) then yields
$$
\phi^{r,s,1}_k(K,\eta)=C^{r,s}_{n,k}\int_{\partial K}\mathbf{1}_\eta(x,u_x)x^ru_x^s\sum_{i=1}^{n-1}b_{i}(x)^2
\sum_{|I|=n-1-k\atop i\notin I}\prod_{j\in I} k_{j}(x)\,\mathcal{H}^{n-1}(\D x),
$$
where the $k_j(x)$ are the principal curvatures and the unit vectors $b_j(x)$ give the principal directions of curvature of $K$ at $x\in\partial K$. In particular, for a convex body $K$ in $\R^3$ with a $C^2$ boundary we get 
$$
\phi^{r,s,1}_{1}(K,\eta)=C^{r,s}_{3,1}\int_{\partial K }\mathbf{1}_\eta(x,u_x)x^ru_x^s 
\left(k_1(x)b_{2}(x)^2+k_2(x)b_{1}(x)^2  \right)\,\mathcal{H}^{2}(\D x).
$$

\section{Proof of Theorem \ref{Theorem2.3}}\label{sec5}

For the proof of Theorem \ref{Theorem2.3}, it suffices to prove the following.

\begin{theorem}\label{Theorem5.1}
Let $p\in\N_0$. Let $\Gamma:\Kn\times\B(\Sigma^n)\to\T^p$ be a mapping with the following properties.\\
$\rm (a)$ $\Gamma(K,\cdot)$ is a $\T^p$-valued measure, for each $K\in\Kn$;\\ 
$\rm (b)$ $\Gamma$ is translation invariant and rotation covariant;\\
$\rm (c)$ $\Gamma$ is locally defined;\\
$\rm (d)$ $\Gamma$ is weakly continuous.\\
Then $\Gamma$ is a linear combination, with constant coefficients, of the mappings $Q^m\phi^{0,s,j}_k$, where $m,s\in\N_0$ and $j\in\{0,1\}$ satisfy $2m+2j+s=p$ and where $k\in\{0,\dots,n-1\}$, but $j=0$ if $k\in\{0,n-1\}$.
\end{theorem}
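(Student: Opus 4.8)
The plan is to work through a sequence of reductions and then to dispose of the terms that cannot admit a continuous extension. By Lemma~\ref{Lemma3.5} we may write $\Gamma=\sum_{k=0}^{n-1}\Gamma_k$, where each $\Gamma_k$ again has properties (a)--(d) and is homogeneous of degree $k$, so it suffices to treat each $\Gamma_k$ separately. The restriction of $\Gamma_k$ to $\Pn$ satisfies the hypotheses of Theorem~\ref{Theorem3.2}, hence is a linear combination of the mappings $Q^m\phi_{k'}^{0,s,j}$. Since $Q^m\phi_{k'}^{0,s,j}$ is homogeneous of degree $k'$ while $\Gamma_k|_{\Pn}$ is homogeneous of degree $k$, a scaling argument together with the linear independence in Theorem~\ref{Theorem3.1} forces only the terms with $k'=k$ to survive, so
$$\Gamma_k(P,\cdot)=\sum_{m,s,j:\,2m+2j+s=p} c_{msj}\,Q^m\phi_k^{0,s,j}(P,\cdot)\qquad\text{for all }P\in\Pn,$$
with $c_{msj}=0$ whenever $j\ge 1$ and $k\in\{0,n-1\}$.

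Next I would subtract off the terms that are already known to extend. For $j\in\{0,1\}$ the mapping $Q^m\phi_k^{0,s,j}$ has a weakly continuous extension to $\Kn$ with properties (a)--(d): for $j=0$ this is the local Minkowski tensor built from the support measure $\Lambda_k$, and for $j=1$ (so $1\le k\le n-2$) it is $(K,\eta)\mapsto T_K(\mathbf{1}_\eta\varphi_k^{0,s})$ by Theorem~\ref{Theorem4.1}. Let $\Delta_k$ denote the corresponding linear combination of these extensions and put $\Gamma_k':=\Gamma_k-\Delta_k$. Then $\Gamma_k'$ still has properties (a)--(d) on $\Kn$, is homogeneous of degree $k$, and on $\Pn$ equals $\sum_{j\ge 2}c_{msj}Q^m\phi_k^{0,s,j}$. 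Since polytopes are dense in $\Kn$ and $\Gamma_k'$ is weakly continuous, $\Gamma_k'$ is uniquely determined by its restriction to $\Pn$; thus the theorem reduces to the assertion that the polytopal mapping $\sum_{j\ge 2}c_{msj}Q^m\phi_k^{0,s,j}$ admits a weakly continuous extension to $\Kn$ only if all the coefficients $c_{msj}$ with $j\ge 2$ vanish. This is nonvacuous only for $1\le k\le n-2$ (and $p\ge 4$); in the remaining cases $\Gamma_k'=0$ outright.

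The core of the argument, and the main obstacle, is this nonexistence statement, which I would prove by contradiction by testing $\Gamma_k'$ on a highly symmetric convex body together with a controlled polytopal approximation. Concretely, take $K$ to be, near a boundary point, a paraboloid of revolution about an axis with direction $u_0$, and approximate it by polytopes $P_i$ obtained by lifting onto the paraboloid a fine polytopal complex determined by a lattice in the hyperplane $u_0^\perp\cong\R^{n-1}$. For Borel sets $\beta_i$ shrinking to the apex and a fixed neighbourhood $\omega$ of $u_0$ in $\Sn$, the explicit polytopal formula~(\ref{2.6}) expresses $\Gamma_k'(P_i,\beta_i\times\omega)$ as a sum over the $k$-faces $F$ of $P_i$ of terms proportional to $\bigl(\sum_{j\ge 2}c_{msj}Q^mQ_{L(F)}^{j}\bigr)$; after a suitable normalization the limit is a fixed tensor obtained by averaging the powers $Q_L^{j}$ over the $k$-dimensional subspaces $L$ dictated by the lattice. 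Weak continuity would force this limit to equal $\Gamma_k'(K,\beta\times\omega)$, and the rotational symmetry of $K$ about the $u_0$-axis would then force it to be invariant under all of $\mathrm{SO}(u_0^\perp)$; but for $j\ge 2$ the lattice average of $Q_L^{j}$ is a genuinely anisotropic, lattice-dependent tensor (for a cubic lattice it singles out the coordinate directions), hence not $\mathrm{SO}(u_0^\perp)$-invariant unless all $c_{msj}$ vanish. Making this precise requires a careful estimate of the $k$-face contributions, the explicit identification of the averaging tensor, and the construction of lattice polytopes with enough symmetry to run the rotation argument yet enough asymmetry to detect the anisotropy; the case $n=3$ (where necessarily $k=1$, $u_0^\perp$ is only two-dimensional, and the relevant faces are edges, so $Q_{L(F)}$ is a rank-one projection) is more delicate and needs a separate, more elaborate construction. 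Once all $c_{msj}$ with $j\ge 2$ are shown to vanish, we obtain $\Gamma_k=\Delta_k$ for every $k$, and summing over $k$ gives the asserted representation of $\Gamma$.
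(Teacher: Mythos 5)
Your overall architecture is the same as the paper's: reduce via Lemma~\ref{Lemma3.5} to a fixed degree of homogeneity, apply Theorem~\ref{Theorem3.2} on polytopes, subtract the $j\in\{0,1\}$ terms (which extend weakly continuously by Lemma~\ref{Lemma3.4} and Section~\ref{sec4}), and eliminate the $j\ge 2$ terms by testing weak continuity against polytopes obtained by lifting a lattice complex to a paraboloid of revolution. The genuine gap is in the step you call the core. You assert that the lattice average $\sum_{j\ge 2}c_j\,Q^{q-j}\sum_i Q_{L_i}^{\,j}$ over the coordinate $k$-subspaces is ``genuinely anisotropic, hence not $\mathrm{SO}(u_0^\perp)$-invariant unless all $c_{msj}$ vanish.'' This is exactly the ``shorter proof'' the paper warns cannot work: already for two variables and $k=1$, $q=d=3$, the polynomial $c_2(x_1^2+x_2^2)\left[(x_1^2)^2+(x_2^2)^2\right]+c_3\left[(x_1^2)^3+(x_2^2)^3\right]$ is a multiple of $(x_1^2+x_2^2)^3$ whenever $2c_2+3c_3=0$, so a nontrivial combination of coordinate-subspace averages can be fully rotation invariant. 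The phenomenon is not a toy artifact: for $n=3$, $k=1$ and $d$ odd the tensor built from the square lattice can indeed be ${\rm O}(3,e_3)$-invariant with $c_d\neq 0$, so your approximating polytopes would yield no contradiction there, and your proof collapses precisely in the case you defer as ``more delicate.''

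What makes the argument work is finer than ``enough asymmetry to detect anisotropy.'' One isolates the largest index $d$ with $c_d\neq 0$ and shows non-invariance is visible at top order only: for $d$ even a two-variable substitution suffices; for $d$ odd a three-variable substitution produces the leading coefficient $d\,c_d\bigl[\binom{n-4}{k-2}-\binom{n-4}{k-1}\bigr]$, which vanishes exactly when $2k=n-1$, and that case forces a repetition of the whole construction inside a hyperplane (replacing $(\R^n,\R^{n-1},e_n)$ by $(\R^{n-1},\R^{n-2},e_{n-1})$, with the set $\Omega_h$ fattened in the $e_n$-direction). For $n=3$ the cubic lattice must be abandoned altogether: the paper uses a triangular complex with angle $\pi/d$ \emph{adapted to the unknown coefficient structure} and the rotation-averaged polytopes $P_{h,t}^d=\frac1d\sum_{l=0}^{d-1}\vartheta_d^l P_{h,t}$, invariant under $\vartheta_d$, so that the top coefficient becomes $d\,c_d\neq 0$ by a trigonometric identity. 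Your outline correctly identifies the quantitative ingredients needed for the limit argument (the $\varepsilon$-closeness estimates as in Lemma~\ref{Lemma5.1} and the positivity of $\int f\,\D\Psi_k(K_h,\cdot)$ so the discrepancy survives the weak limit), but the invariance claim as you state it is false, and filling it requires the case distinctions ($d$ even/odd, $2k=n-1$, $n=3$) and the $d$-dependent complex, which are the actual substance of the paper's proof.
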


Since we know that the mappings $Q^m\phi_k^{p,s,j}$ with $j\in\{0,1\}$ have the properties (a), (b), (c), (d) and since linear independence follows from Theorem \ref{Theorem3.1}, Theorem \ref{Theorem2.3} can be derived from Theorem \ref{Theorem5.1} in the same way as, in Section \ref{sec3}, Theorem \ref{Theorem2.2} was derived from Theorem \ref{Theorem3.2}. For this, we use the weak continuity to extend (\ref{n3.1}), for $j\in\{0,1\}$, from polytopes to general convex bodies.

For the proof of Theorem \ref{Theorem5.1} we note first that, by Lemma \ref{Lemma3.5}, it is sufficient to prove the theorem under the additional assumption that $\Gamma$ is homogeneous of some fixed degree $k\in\{0,\dots,n-1\}$. We assume this and then distinguish two cases.

If $k\in\{0,n-1\}$, then Theorem \ref{Theorem3.2} shows that on polytopes $P$ the mapping $\Gamma$ is of the form
$$ \Gamma(P,\cdot) = \sum_{m,s\ge 0 \atop 2m+s=p} c_{ms}  Q^m\phi^{0,s}_k(P,\cdot) $$
with constants $c_{ms}$. From the weak continuity of $\Gamma$ and of $\phi^{0,s}_k$ it follows that 
$$ \Gamma(K,\cdot) = \sum_{m,s\ge 0 \atop 2m+s=p} c_{ms}  Q^m\phi^{0,s}_k(K,\cdot) $$
for all $K\in\Kn$.

Now let $k\in\{1,\dots,n-2\}$. Thus, from now on we are dealing only with dimensions $n\ge 3$. By Theorem \ref{Theorem3.2}, on polytopes $P$ the mapping $\Gamma$ is of the form
$$ \Gamma(P,\cdot) = \sum_{m,j,s\ge 0 \atop 2m+2j+s=p} c_{mjs}  Q^m\phi^{0,s,j}_k(P,\cdot) $$
with constants $c_{mjs}$. Since $\Gamma$ and the mappings $\phi_k^{0,s,0}$ and $\phi_k^{0,s,1}$ are weakly continuous, the mapping $\Gamma'$ defined by 
$$ \Gamma':=\Gamma-\sum_{m,j,s\ge 0,\,j\le 1 \atop 2m+2j+s=p} c_{mjs}  Q^m\phi^{0,s,j}_k$$
has again the properties (a) -- (d) of Theorem \ref{Theorem5.1}, and on polytopes $P$ it is of the form
$$ \Gamma'(P,\cdot) = \sum_{m,s\ge 0,\,j\ge 2 \atop 2m+2j+s=p} c_{mjs}  Q^m\phi^{0,s,j}_k(P,\cdot). $$
We have to show that here all the remaining constants $c_{mjs}$ are zero. 

The main idea of the proof is to construct a particular sequence of polytopes which converges to a convex body that has more rotational symmetries than the approximating polytopes. The mapping $\Gamma'$ must be covariant under the rotations mapping the limit body into itself. If $\Gamma'$ is not identically zero, it can be shown that for our special choice of the approximating polytopes this covariance is violated so strongly that it is still violated in the weak limit, which is a contradiction. This approach requires some preparations. In the following, we write again $\Gamma$ instead of $\Gamma'$.

Let $(e_1,\dots,e_n)$ be an orthonormal basis of ${\mathbb R}^n$ and let ${\mathbb R}^{n-1}$ be the subspace spanned by $e_1,\dots,e_{n-1}$. 

\vspace{2mm}

\noindent{\bf Definition.} Let $\omega\in{\mathcal B}({\mathbb S}^{n-1})$ and $0<\varepsilon<1$. We say that $\omega$ is $\varepsilon$-{\em close to} $-e_n$ if each $u\in\omega$ satisfies
\begin{equation}\label{34a}
\langle u,-e_n\rangle > 1-\varepsilon
\end{equation}
and
\begin{equation}\label{35}
|\langle u,a\rangle| < \varepsilon\|a\| \quad\mbox{for each }a\in{\mathbb R}^{n-1}.
\end{equation}

\vspace{2mm}

For example, if $\mu>0$ is sufficiently small (depending on $\varepsilon$), then the nonempty, open set
$$ \omega = \{u\in \Sn:\langle u,-e_n\rangle >1-\mu\}$$
is $\varepsilon$-close to $-e_n$.

In the following, we write
$$ \Gamma(K,f):= \int_{\Sigma^n} f(u)\, \Gamma(K,\D (x,u))$$
for $K\in \Kn$ and any continuous real function $f$ on the unit sphere ${\mathbb S}^{n-1}$. For a polytope $P\in \Pn$, we define
$$ W_k(P,f):= \sum_{F\in{\mathcal F}_k(P)} \Ha^k(F)\int_{\nu(P,F)}f\,\D\Ha^{n-k-1}.$$
Using the $k$th area measure $\Psi_k(P,\cdot)=\Lambda_k(P,\R^n\times\cdot)$ (see \cite[(4.20), (4.24)]{Sch14}), this can also be written as
\begin{equation}\label{35a}
W_k(P,f) = \omega_{n-k}\int_{\Sn} f\,\D\Psi_k(P,\cdot).
\end{equation}

For polytopes $P$ and for suitable $f$ and $E$, the following lemma approximates $\Gamma(P,f)(E)$ by a simpler expression.

\vspace{2mm}

\begin{lemma}\label{Lemma5.1}
Let $k\in\{1,\dots,n-2\}$. Suppose that $\Gamma$ satisfies the assumptions listed in Theorem $\ref{Theorem5.1}$ and that for $P\in{\mathcal P}^n$ it is of the form
\begin{equation}\label{31a} 
\Gamma(P,\cdot) = \sum_{m,s\ge 0,\,j\ge 2 \atop 2m+2j+s=p} c_{mjs}  Q^m\phi^{0,s,j}_k(P,\cdot)
\end{equation} 
with constants $c_{mjs}$ which are not all zero. Let $s_0$ be the smallest number $s$ for which $c_{mjs}\not=0$ for some $m,j$, set $q:=(p-s_0)/2$ and $c_j:=((2q)!s_0!/p!)c_{(q-j)js_0}C_{n,k}^{0,s_0}$ $($recall definition $(\ref{2.7a}))$. Let $d$ be the largest $j\in \{2,\dots,q\}$ for which $c_j\not=0$. 

Let $\omega\in{\mathcal B}({\mathbb S}^{n-1})$ and $0<\varepsilon<1$ be such that $\omega$ is $\varepsilon$-close to $-e_n$. Let $f$ be a nonnegative, continuous real function on ${\mathbb S}^{n-1}$ with support in $\omega$. For $P\in{\mathcal P}^n$, define $\Delta(P,f)\in{\mathbb T}^{2q}$ by
\begin{equation}\label{5.2ab}  
\Delta(P,f) := \sum_{j=2}^d c_j Q^{q-j}\sum_{F\in{\mathcal F}_k(P)} Q^j_{L(F)}\Ha^k(F) \int_{\nu(P,F)} f\,\D\Ha^{n-k-1}.
\end{equation}

Let
$$ E':=(\underbrace{a,\dots,a}_{2q})\quad\mbox{with } a\in{\mathbb R}^{n-1}, \|a\|=1,$$
and
\begin{equation}\label{5.2a} 
E:=(b_1,\dots,b_p):=(\underbrace{a,\dots,a}_{2q},\underbrace{-e_n,\dots,-e_n}_{s_0}).
\end{equation}
Then
\begin{equation}\label{5.2c}  
\left|\Gamma(P,f)(E)-\Delta(P,f)(E')\right|\le C_3 W_k(P,f)\varepsilon
\end{equation}
with a constant $C_3$ depending only on $\Gamma$.
\end{lemma}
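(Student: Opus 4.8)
The plan is to compute $\Gamma(P,f)(E)$ directly from the explicit shape (\ref{31a}) and to isolate one distinguished contribution, all others being of order $\varepsilon\,W_k(P,f)$. First I would insert the representation (\ref{2.6}) with $r=0$ into (\ref{31a}). Since $f$ depends only on $u$ and $r=0$, the $x$-integration over a face $F$ simply produces the factor $\Ha^k(F)$, so that
\[
\Gamma(P,f)=\sum_{\substack{m,s\ge0,\;j\ge2\\2m+2j+s=p}}c_{mjs}\,C_{n,k}^{0,s}\sum_{F\in{\mathcal F}_k(P)}Q^mQ_{L(F)}^{\,j}\,\Ha^k(F)\int_{\nu(P,F)}f(u)\,u^s\,\Ha^{n-k-1}(\D u).
\]
Evaluating the symmetric product $Q^mQ_{L(F)}^{\,j}u^s$ (a tensor of rank $p$) at the $p$-tuple $E$ gives, with $\odot$ normalised as in Section~\ref{sec2}, a sum over the ways of distributing the $p$ entries of $E$ among the three factors; I will use that $(S\odot T)(y,\dots,y)=S(y,\dots,y)\,T(y,\dots,y)$ on a constant tuple.

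The estimation uses that $\omega$ is $\varepsilon$-close to $-e_n$: for every $u$ in the support of $f$ one has $\langle u,-e_n\rangle>1-\varepsilon$ (so $0\le1-\langle u,-e_n\rangle^{s_0}\le s_0\varepsilon$) and $|\langle u,a\rangle|<\varepsilon$ for each unit $a\in{\mathbb R}^{n-1}$; moreover $\langle a,a\rangle=1$, $\langle a,-e_n\rangle=0$ and $\|\pi_{L(F)}a\|\le1$. In the expansion of $Q^mQ_{L(F)}^{\,j}u^s(E)$ the factor $u^s$ consumes $s$ of the $p$ entries of $E$. Since $E$ carries only $s_0$ copies of $-e_n$, as soon as $s>s_0$ --- or $s=s_0$ but $u^s$ consumes at least one copy of $a$ --- there appears a factor $\langle u,a\rangle$ of modulus $<\varepsilon$, while all the remaining scalar factors (values of $Q$, $Q_{L(F)}$ and $u$ on pairs of entries of $E$) are bounded by a constant depending only on $p$, uniformly in $F$. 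Each such term therefore contributes at most $(\mathrm{const})\,\varepsilon$ per face, and summing over the finitely many faces and the finitely many multi-indices, together with $\sum_F\Ha^k(F)\int_{\nu(P,F)}f\,\D\Ha^{n-k-1}=W_k(P,f)$ and $f\ge0$, these collected contributions are $\le(\mathrm{const})\,W_k(P,f)\,\varepsilon$.

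The only surviving contribution comes from $s=s_0$ (which forces $m=q-j$) together with the splitting in which $u^{s_0}$ takes precisely the $s_0$ copies of $-e_n$ and $Q^{q-j}Q_{L(F)}^{\,j}$ takes the $2q$ copies of $a$. By the combinatorial constant of the symmetric product this splitting enters with weight $(2q)!\,s_0!/p!$, and by the constant-tuple identity its value is $(2q)!\,s_0!/p!\cdot c_{(q-j)js_0}C_{n,k}^{0,s_0}\,\langle u,-e_n\rangle^{s_0}\|\pi_{L(F)}a\|^{2j}$; replacing $\langle u,-e_n\rangle^{s_0}$ by $1$ costs another $(\mathrm{const})\,W_k(P,f)\,\varepsilon$. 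What is left is $\sum_{j=2}^{q}c_j\sum_{F\in{\mathcal F}_k(P)}\|\pi_{L(F)}a\|^{2j}\,\Ha^k(F)\int_{\nu(P,F)}f\,\D\Ha^{n-k-1}$, which equals $\Delta(P,f)(E')$, because $(Q^{q-j}\odot Q_{L(F)}^{\,j})(a,\dots,a)=\langle a,a\rangle^{q-j}\|\pi_{L(F)}a\|^{2j}=\|\pi_{L(F)}a\|^{2j}$ and $c_j=0$ for $j>d$. Adding up the error bounds gives (\ref{5.2c}). The main obstacle is purely bookkeeping: tracking the combinatorial factors produced by the symmetric tensor products and checking that the distinguished splitting reproduces the constants $c_j$ exactly as defined; once the $\varepsilon$-closeness inequalities are in place, the remaining analytic content --- that everything else is $O(\varepsilon\,W_k(P,f))$ --- is routine.
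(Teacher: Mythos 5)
Your proposal is correct and follows essentially the same route as the paper's proof: expand $\Gamma(P,f)$ via (\ref{2.6}), evaluate the symmetric product $Q^mQ_{L(F)}^{\,j}u^s$ at $E$ by distributing the entries, use the $\varepsilon$-closeness of $\omega$ to $-e_n$ to absorb every term with $s>s_0$ or with $u^s$ hitting a copy of $a$ (and the replacement of $\langle u,-e_n\rangle^{s_0}$ by $1$) into an error of size $O(W_k(P,f)\varepsilon)$, and identify the surviving splitting with weight $(2q)!\,s_0!/p!$, which reproduces the constants $c_j$ and hence $\Delta(P,f)(E')$. The combinatorial bookkeeping you defer is exactly what the paper carries out with its permutation sum and the remainders $R_1,R_2,R_3$, so there is no gap.
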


\begin{proof} For a polytope $P$ and any set $\eta={\mathbb R}^n\times\omega'$ with $\omega'\in{\mathcal B}({\mathbb S}^{n-1})$, the representation (\ref{31a}) can explicitly be written as
\begin{equation} 
\Gamma(P,\eta)= \sum_{m,s\ge 0,\,j\ge 2 \atop 2m+2j+s=p} c_{mjs}   C_{n,k}^{0,s} \sum_{F\in{\mathcal F}_k(P)} \Ha^k(F)\int_{\omega'\cap \nu(P,F)} Q^m Q_{L(F)}^j u^s\,{\mathcal H}^{n-k-1}(\D u),
\end{equation}
according to (\ref{2.6}). Therefore,
\begin{equation}\label{32}  
\Gamma(P,f)= \sum_{m,s\ge 0,\,j\ge 2 \atop 2m+2j+s=p} c_{mjs}   C_{n,k}^{0,s} \sum_{F\in{\mathcal F}_k(P)} \Ha^k(F)\int_{\nu(P,F)} Q^m Q_{L(F)}^j u^sf(u)\,{\mathcal H}^{n-k-1}(\D u).
\end{equation}

In the following estimates, we need only consider vectors $u\in\omega$, since $f$ has its support in $\omega$. Let $u\in\omega$. For a $p$-tuple $E$ as given by (\ref{5.2a}), we have
\begin{eqnarray*} 
& & (Q^mQ_{L(F)}^ju^s)(E)\\
& &= \frac{1}{p!} \sum_{\sigma\in S(p)} Q^m(b_{\sigma(1)},\dots,b_{\sigma(2m)})Q_{L(F)}^j (b_{\sigma(2m+1)},\dots,b_{\sigma(2m+2j)}) u^s(b_{\sigma(2m+2j+1)},\dots,b_{\sigma(p)}),
\end{eqnarray*}
where $S(p)$ is the group of permutations of $1,\dots,p$. If at least one of the arguments of $u^s$, the
vectors $b_{\sigma(2m+2j+1)},\dots,b_{\sigma(p)}$, is not equal to $- e_n$ and hence is equal to $a$, then
\begin{equation}\label{33} 
|u^s(b_{\sigma(2m+2j+1)},\dots,b_{\sigma(p)})|=|\langle u,b_{\sigma(2m+2j+1)}\rangle| \cdots |\langle u,b_{\sigma(p)}\rangle|\le \varepsilon
\end{equation}
by (\ref{35}), since we have assumed that $\omega$ is $\varepsilon$-close to $-e_n$ and $\|a\|=1$. When $s>s_0$, this holds for all $\sigma\in S(p)$. The absolute value of $Q^m(\cdot)Q_{L(F)}^j(\cdot)$ in the last sum above is at most $1$. Hence, we have
$$ |(Q^mQ_{L(F)}^ju^s)(E)| \le \varepsilon \qquad\mbox{for }s>s_0.$$ 
For each fixed $s>s_0$, this yields the estimate
\begin{eqnarray*}
& & \left| \sum_{m\ge 0,\,j\ge 2 \atop 2m+2j=p-s} c_{mjs}C_{n,k}^{0,s} \sum_{F\in{\mathcal F}_k(P)} \Ha^k(F)\int_{\nu(P,F)} (Q^m Q_{L(F)}^j u^{s})(E)f(u)\,{\mathcal H}^{n-k-1}(\D u)\right|\\
& & \le \sum_{m\ge 0,\,j\ge 2 \atop 2m+2j=p-s} |c_{mjs}| C_{n,k}^{0,s} \sum_{F\in{\mathcal F}_k(P)} \Ha^k(F)\int_{\nu(P,F)} \varepsilon f\,\D\Ha^{n-k-1}\\
& & \le C_1W_k(P,f)\varepsilon
\end{eqnarray*}
with a constant $C_1$ that (for given dimension) depends only on $\Gamma$. For $s=s_0$ we have
$$ |u^{s_0}(b_{\sigma(2m+2j+1)},\dots,b_{\sigma(p)})|\le \varepsilon $$
if at least one of the vectors $b_{\sigma(2m+2j+1)},\dots,b_{\sigma(p)}$ is not equal to $- e_n$, and otherwise
$$ u^{s_0}(b_{\sigma(2m+2j+1)},\dots,b_{\sigma(p)})=\langle u,-e_n\rangle^{s_0}.$$
Hence, we obtain
$$ (Q^m Q_{L(F)}^j u^{s_0})(E) = \frac{(2q)!s_0!}{p!}(Q^mQ_{L(F)}^j)(E')\langle u,-e_n\rangle^{s_0} + R_1$$
with
$$ |R_1|\le\left[1-\binom{p}{s_o}^{-1}\right]\varepsilon.$$
For the vectors $u\in\omega$ we have the estimate $ 1-\varepsilon \le \langle u,-e_n\rangle \le 1$ by (\ref{34a}) and hence we can write 
$$ (Q^m Q_{L(F)}^j u^{s_0})(E) = \frac{(2q)!s_0!}{p!}(Q^mQ_{L(F)}^j)(E') + R_2$$
with
$$ |R_2|\le C_2\varepsilon,$$
where the constant $C_2$ depends only on $\Gamma$.

Splitting the sum in (\ref{32}) in the form $\sum_{m,j,s} = \sum_{m,j,s_0}+\sum_{m,j, s>s_0}$, we conclude that
\begin{eqnarray*}
& & \Gamma(P,f)(E)\\
& & = \frac{(2q)!s_0!}{p!} \sum_{m\ge 0,\,j\ge 2\atop 2m+2j=p-s_0} c_{mjs_0}C_{n,k}^{0,s_0} \sum_{F\in{\mathcal F}_k(P)}(Q^mQ_{L(F)}^j)(E') \Ha^k(F) \int_{\nu(P,F)}f\,\D{\mathcal H}^{n-k-1} + R_3
\end{eqnarray*}
with 
$$ |R_3|\le C_3W_k(P,f)\varepsilon,$$
where $C_3$ depends only on $\Gamma$. With $((2q)!s_0!/p!)c_{(q-j)js_0}C_{n,k}^{0,s_0}=c_j$, we obtain
\begin{eqnarray*} 
\Gamma(P,f)(E) &=& \sum_{j=2}^q c_j\sum_{F\in{\mathcal F}_k(P)} (Q^{q-j}Q_{L(F)}^j)(E')
\Ha^k(F)\int_{\nu(P,F)}f \,\D{\mathcal H}^{n-k-1} + R_3\\ 
&=& \Delta(P,f)(E')+R_3
\end{eqnarray*}
(recall that $c_j\neq 0$ for some $j\in \{2,\ldots,q\}$ and that $d$ was defined as the largest number $j\in \{2,\dots,q\}$ such that $c_j\not=0$).
\end{proof}

As already indicated, we construct a sequence of polytopes that converges to a convex body having more rotational symmetries than the approximating polytopes. If $\Gamma\not\equiv 0$, then on these polytopes the mapping $\Gamma$ violates the rotation covariance in a way that is preserved under the weak limit. This contradiction will show that $\Gamma\equiv 0$. We turn to the construction of the polytopes.

Recall that $(e_1,\dots,e_n)$ is the standard orthonormal basis of ${\mathbb R}^n$ and that the linear hull ${\rm lin}\{e_1,\dots,e_{n-1}\}$ is identified with ${\mathbb R}^{n-1}$. We write the points $y\in {\mathbb R}^n$ in the form
$$ y= y_1e_1+\dots +y_ne_n= (y_1,\dots, y_n),$$
thus $(x_1,\dots,x_{n-1},0)\in{\mathbb R}^{n-1}$. 

We define the lifting map $\ell:{\mathbb R}^{n-1}\to{\mathbb R}^n$ by
\begin{equation}\label{lift} 
\ell(x):= x+\|x\|^2e_n \qquad\mbox{for } x\in{\mathbb R}^{n-1}.
\end{equation}
Then ${\mathcal R}:= \ell({\mathbb R}^{n-1})$ is a paraboloid of revolution. 

Let $t>0$ be given and consider the lattice $2t{\mathbb Z}^{n-1}$ of all points 
$$2t(m_1,\dots,m_{n-1},0)\in{\mathbb R}^{n-1}$$ 
with integers $m_1,\dots,m_{n-1}$. The points of $2t{\mathbb Z}^{n-1}$ are the vertices of a tessellation of ${\mathbb R}^{n-1}$ into $(n-1)$-cubes, which together with all their faces form a polytopal complex, which we denote by ${\mathcal C}_t$. 

We consider the polyhedral set
$$ R_t:= {\rm conv}\,\ell(2t{\mathbb Z}^{n-1}).$$
For given $z\in\R^{n-1}$, we define an affine map $\alpha_z:\R^{n-1}\to\R^n$ by
\begin{equation}\label{aff} 
\alpha_z(y):= y+2\langle z,y\rangle e_n +\left(r_t^2-\|z\|^2\right)e_n, \qquad y\in\R^{n-1},
\end{equation}
where $r_t$ ($=t\sqrt{n-1}$) is the radius of the sphere through the vertices of a cube in ${\mathcal C}_t$. Then
$$ \alpha_z(z+x)=\ell(z+x)+\left(r_t^2-\|x\|^2\right)e_n$$
for all $x\in\R^{n-1}$. Hence, if $\|x\|=r_t$, then
$$ \alpha_z(z+x)=\ell(z+x).$$
Let $C_z$ be an $(n-1)$-cube of the complex ${\mathcal C}_t$, with center $z$. On the vertices of $C_z$, the mapping $\ell$ coincides with the affine map $\alpha_z$. Therefore, the $\ell$-images of the vertices of $C_z$ lie in the hyperplane $H:=\alpha_z({\mathbb R}^{n-1})$. Every point of the lattice $2t{\mathbb Z}^{n-1}$ which is not a vertex of $C_z$ is mapped by $\ell$ into the `upper' open halfspace bounded by $H$. It follows that $H$ is a supporting hyperplane of $R_t$. Therefore, the convex hull of the $\ell$-images of the vertices of $C_z$ is a facet of $R_t$. (A more general version of this observation is well known in the theory of Voronoi and Delaunay tessellations; see \cite{For04}, \cite[Sec.~4.4]{DO11}, \cite[Sec.~7]{JT13}, for example.) From this it follows further that each face $G$ of ${\mathcal C}_t$ is in one-to-one correspondence with a face $F$ of $R_t$ of the same dimension, in such a way that $F$ is the convex hull of the $\ell$-images of the vertices of $G$. We say in this case that $F$ {\em lies above} $G$. In particular, $R_t$ has no other faces than those lying above the faces of ${\mathcal C}_t$. For a given face $F$ of $R_t$, we denote by $F^\Box$ the face of ${\mathcal C}_t$ above which it lies. If $F$ is the facet of $R_t$ for which $F^\Box=C_z$, then $F=\alpha_z(C_z)$.

For $h>0$, let
$$ H^-_h := \{y\in{\mathbb R}^n:\langle y,e_n\rangle \le h\}.$$
We define a convex body of revolution by
$$ K_h:= \left\{y\in{\mathbb R}^n:y_n  \ge y_1^2+\dots+y_{n-1}^2\right\}\cap H^-_h.$$
This is the intersection of the epigraph of $\ell$ with the closed halfspace $H^-_h$. Further, we define a polytope $P_{h,t}$ by
$$ P_{h,t}:= R_t\cap H^-_h.$$
Then $P_{h,t}\to K_h$  for $t\to 0$. 

By $\omega_h$ we denote the spherical image (that is, the set of outer unit normal vectors) of the part of $K_h$ that lies in the interior of the halfspace $H^-_{h/2}$.

Now let $0<\varepsilon<1$ be given. We choose $h$ in dependence of $\varepsilon$, in the following way.

Recall that $\pi_L:{\mathbb R}^n\to L$ denotes the orthogonal projection to a subspace $L$. Let $\lambda_z:{\mathbb R}^{n-1}\to{\mathbb R}^n$ be the linear part of the affine map $\alpha_z$ used above, that is, $$\lambda_z(x):= x+2\langle z,x \rangle e_n\qquad\mbox{for } x\in\R^{n-1}.$$ 
Since $\lambda_z$ for $z=0$ is just the inclusion map ${\mathbb R}^{n-1} \hookrightarrow {\mathbb R}^n$, we can choose $h>0$ so small that for $z\in\R^{n-1}$ with $\|z\|^2\le h$ the following holds. For every linear subspace $L$ of ${\mathbb R}^{n-1}$ and for any unit vector $a\in{\mathbb R}^n$, we have
\begin{equation}\label{13} 
\left|\|\pi_L a\|^2 -\|\pi_{\lambda_z L} a\|^2\right| \le \varepsilon.
\end{equation}
Moreover, we choose $h$ so small that the set $\omega_h$ is $\varepsilon$-close to $-e_n$.

We point out that $K_h,P_{h,t},\omega_h$ all depend on $\varepsilon$, although this is not made explicit by the notation.

We define
$$ {\mathcal F}_{k,h}(P_{h,t}):=\{F\in{\mathcal F}_k(P_{h,t}): \omega_h\cap\nu(P_{h,t},F)\not=\emptyset\}.$$

For $u\in\omega_h$, let $H(K,u)$ denote the supporting hyperplane of the convex body $K$ with outer normal vector $u$. Let $H_h$ be the boundary hyperplane of the halfspace $H^-_h$. There is a number $\delta>0$ such that each hyperplane $H(K_h,u)$ with $u\in\omega_h$ has distance at least $\delta$ from the set $K_h\cap H_h$. Therefore, there is a number $t_0>0$ (depending on $\varepsilon$) such that for $0<t<t_0$ each supporting hyperplane $H(P_{h,t},u)$ with $u\in\omega_h$ has distance at least $\delta/2$ from $K_h\cap H_h$. For these $t$, a face $F\in{\mathcal F}_{k,h}(P_{h,t})$ cannot contain a point of $H_h$ and hence must be a face of $R_t$ lying above some face $F^\Box$ of ${\mathcal C}_t$. Decreasing $t_0$, if necessary, we can further assume that each such face $F^\Box$ is a face of some cube $C_z$ of ${\mathcal C}_t$ whose center satisfies $\|z\|^2\le h$. We assume in the following that $0<t<t_0$.

Let ${\mathcal L}_k$ denote the set of all linear subspaces spanned by any $k$ vectors of $e_1,\dots,e_{n-1}$. This set of $k$-dimensional coordinate subspaces of $\R^{n-1}$ contains $b(n,k)=\binom{n-1}{k}$ elements. We numerate them by $L_1,\dots,L_{b(n,k)}$. By ${\mathcal F}^i_{k,h}(P_{h,t})$ we denote the set of $k$-faces $F\in{\mathcal F}_{k,h}(P_{h,t})$ for which $F^\Box$ is parallel to $L_i$. 

Now let $f$ be a nonnegative, continuous function on ${\mathbb S}^{n-1}$ that is invariant under all rotations fixing $e_n$, has its support in $\omega_h$ and is not identically zero.  

The number
\begin{equation}\label{36a} 
W^i_k(P_{h,t},f):= \sum_{F\in{\mathcal F}^i_{k,h}(P_{h,t})} \Ha^k(F)\int_{\nu(P_{h,t},F)}f\, \D{\mathcal H}^{n-k-1}
\end{equation}
is independent of $i$, since the polytope $P_{h,t}$ and the function $f$ are invariant under the rotations permuting the basis vectors $e_1,\dots,e_{n-1}$ and fixing $e_n$. Since
$$ \sum_{i=1}^{b(n,k)}\sum_{F\in{\mathcal F}^i_{k,h}(P_{h,t})} \Ha^k(F)\int_{\nu(P_{h,t},F)}f\, \D{\mathcal H}^{n-k-1} =
\sum_{F\in{\mathcal F}_k(P_{h,t})} \Ha^k(F)\int_{\nu(P_{h,t},F)}f\, \D{\mathcal H}^{n-k-1},$$
where we used that $f$ has its support in $\omega_h$, we have
\begin{equation}\label{36}
b(n,k) W^i_k(P_{h,t},f)= W_k(P_{h,t},f).
\end{equation}
This finishes the construction of the polytopes $P_{h,t}$ and the description of their properties.

Suppose now that $\Gamma$ is a mapping with the properties listed in Theorem \ref{Theorem5.1} and such that for $P\in{\mathcal P}^n$ it is given by
$$ \Gamma(P,\cdot) = \sum_{m,s\ge 0,\,j\ge 2 \atop 2m+2j+s=p} c_{mjs}  Q^m\phi^{0,s,j}_k(P,\cdot) $$
with constants $c_{mjs}$ which are not all zero. Let $f$ be a function as described above. 

As in Lemma \ref{Lemma5.1}, we define $s_0$ as the smallest number $s$ for which $c_{mjs}\not=0$ for some $m,j$. Definition (\ref{5.2ab}) for the polytope $P_{h,t}$ reads
\begin{equation}\label{50a}  
\Delta(P_{h,t},f) = \sum_{j=2}^d c_j Q^{q-j}\sum_{F\in{\mathcal F}_{k,h}(P_{h,t})} Q^j_{L(F)}\Ha^k(F) \int_{\nu(P,F)} f\,\D\Ha^{n-k-1},
\end{equation}
where only faces $F\in{\mathcal F}_{k,h}(P_{h,t})$ appear, since $f$ has its support in $\omega_h$. Let
\begin{equation}\label{40b}
E:= (E',\underbrace{-e_n,\dots,-e_n}_{s_0}),
\end{equation}
where
$$ E':=(\underbrace{a,\dots,a}_{2q})\quad\mbox{with } a\in{\mathbb R}^{n-1},\, \|a\|=1.$$
Since $\omega_h$ is $\varepsilon$-close to $-e_n$, it follows from Lemma \ref{Lemma5.1} that
\begin{equation}\label{51}
\left|\Gamma(P_{h,t},f)(E)-\Delta(P_{h,t},f)(E')\right|\le C_3W_k(P_{h,t},f)\varepsilon.
\end{equation}

Let $F\in{\mathcal F}_{k,h}(P_{h,t})$. The face $F$ lies above some $k$-face $F^\Box$ of ${\mathcal C}_t$. The face $F^\Box$ belongs to some cube $C_z$ with center $z$ satisfying $\|z\|^2\le h$, and $F$ is a translate of $\lambda_zF^\Box$, therefore $L(F)=\lambda_z(L(F^\Box))$. 

Let $a\in{\mathbb R}^{n-1}$ be a unit vector. By (\ref{13}),
$$ \left|\|\pi_L a\|^2-\|\pi_{\lambda_z L} a\|^2\right|\le\varepsilon.$$
This yields
$$ (Q^{q-j}Q_{L(F)}^j)(E') = (Q^{q-j}Q_{L(F^\Box)}^j)(E')+R_4$$
with $$|R_4|\le C_4\varepsilon,$$
where the constant $C_4$ can be chosen to depend only on $\Gamma$. Together with (\ref{50a}) and (\ref{51}) this yields
\begin{eqnarray}\label{41} 
& & \Gamma(P_{h,t},f)(E)\\
& & =  \sum_{j=2}^d c_j\sum_{F\in{\mathcal F}_{k,h}(P_{h,t})} \left(Q^{q-j}Q_{L(F^\Box)}^j\right)(E')
\Ha^k(F)\int_{\nu(P_{h,t},F)} f\,\D{\mathcal H}^{n-k-1} +R_5(E) \nonumber
\end{eqnarray}
with 
$$ |R_5(E)|\le C_5W_k(P_{h,t},f)\varepsilon,$$
where $C_5$ depends only on $\Gamma$. (We have written $R_5(E)$, for fixed $P_{h,t},f$, since later we shall have to distinguish between remainder terms for different arguments $E$.)

For each $F\in{\mathcal F}_{k,h}(P_{h,t})$ we have $L(F^\Box)=L_i$ for suitable $i\in \{1,\dots,b(n,k)\}$, hence, using (\ref{36}),
\begin{eqnarray}\label{53} 
& & \Gamma(P_{h,t},f)(E)\\
& & = \sum_{j=2}^d c_j  \sum_{i=1}^{b(n,k)}\sum_{F\in{\mathcal F}^i_{k,h}(P_{h,t})} (Q^{q-j}Q_{L_i}^j)(E')
\Ha^k(F)\int_{\nu(P_{h,t},F)} f\,\D{\mathcal H}^{n-k-1} +R_5(E) \nonumber\\
& &= b(n,k)^{-1}W_k(P_{h,t},f)\left(\sum_{j=2}^d c_j Q^{q-j}\sum_{i=1}^{b(n,k)} Q_{L_i}^j\right)(E')+R_5(E).\nonumber
\end{eqnarray}

Before we continue, it seems appropriate to point out why we cannot finish the proof shortly. For a shorter proof, we would need the following. Suppose that  $d\le q$. If the polynomial 
$$ \sum_{j=2}^d c_j(x_1^2+\cdots+x_n^2)^{q-j} \sum_{I\subset \{1,\dots,n\},|I|=k}\left(\sum_{i\in I} x_i^2\right)^j$$
is invariant under $SO(n)$, then $c_2=\cdots=c_d=0$. However, this is not true. The simplest example is obtained for $n=2$, $k=1$, $q=d=3$, by
$$ P(x_1,x_2)=c_2(x_1^2+x_2^2)\left[(x_1^2)^2+(x_2^2)^2\right]+c_3\left[(x_1^2)^3+(x_2^2)^3\right].$$
With $x_1^2+x_2^2=1$ we get
$$ P(x_1,x_2)=(2c_2+3c_3)x_1^4-(2c_2+3c_3)x_1^2+(c_2+c_3).$$
Hence, if $2c_2+3c_3=0$, then, for arbitrary $x_1,x_2$,
$$ P(x_1,x_2)=(c_2+c_3)(x_1^2+x_2^2)^3.$$ 
Thus, $P$ is rotation invariant, but not necessarily $c_2=c_3=0$. There are many other examples of this kind. The existence of these examples forces us to perform a case distinction, involving some geometric arguments.

Our approach requires, as it turns out, to consider dimensions three and higher separately. First we consider the case $n\ge 4$ and prove the following lemma.

\vspace{2mm}

\begin{lemma}\label{Lemma5.2} Let $n\ge 4$. Under the assumptions made above, there exist a convex body $K\in{\mathcal K}^n$, a continuous function $f$ on $\Sn$, a $p$-tuple $E$, and a rotation $\vartheta\in{\rm O}(n)$ such that $K$ and $f$ are invariant under $\vartheta$, but $\Gamma(K,f)(\vartheta E)\not=\Gamma(K,f)(E)$.
\end{lemma}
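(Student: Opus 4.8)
The idea is to pass to the limit in \eqref{53} and then play off the extra rotational symmetry of the limit body $K_h$ against the symmetry \emph{defect} of the coefficient tensor. Since $P_{h,t}\to K_h$ in the Hausdorff metric, weak continuity of $\Gamma$ gives $\Gamma(P_{h,t},f)\to\Gamma(K_h,f)$ in $\T^p$, and weak continuity of the $k$-th area measure together with \eqref{35a} gives $W_k(P_{h,t},f)\to W_k(K_h,f)$. Hence, taking $t\to 0$ in \eqref{53}, for the $p$-tuple $E=((a)^{2q},(-e_n)^{s_0})$ with $a\in\Sn\cap\R^{n-1}$ one obtains
$$\Gamma(K_h,f)(E)= b(n,k)^{-1}W_k(K_h,f)\,G(a)+R(E),\qquad |R(E)|\le C_5\,W_k(K_h,f)\,\varepsilon,$$
where, using $\|a\|=1$ and that the $L_i$ are the $k$-dimensional coordinate subspaces of $\R^{n-1}$,
$$G(a):=\Big(\sum_{j=2}^d c_j Q^{q-j}\sum_{i=1}^{b(n,k)}Q_{L_i}^j\Big)\big((a)^{2q}\big)=\sum_{j=2}^d c_j\sum_{|S|=k,\;S\subset\{1,\dots,n-1\}}\Big(\sum_{\ell\in S}a_\ell^2\Big)^j .$$
Note that $G$ does not depend on $\varepsilon$, whereas $K_h$, $f$, $\omega_h$ do. Since the given $f$ is nonnegative, continuous, not identically zero and supported in $\omega_h$, and since $\Psi_k(K_h,\cdot)$ restricted to $\omega_h$ is the $k$-th area measure of the smooth paraboloidal part of $K_h$ and therefore has strictly positive density (all principal curvatures of the paraboloid being positive, and $k\le n-2$), we have $W_k(K_h,f)=\omega_{n-k}\int f\,\D\Psi_k(K_h,\cdot)>0$.

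Now $K_h$ and $f$ are invariant under the group ${\rm O}(n-1)$ of rotations of $\R^n$ fixing $e_n$, so rotation covariance of $\Gamma$ forces $\Gamma(K_h,f)$ to be an ${\rm O}(n-1)$-invariant tensor; in particular $\Gamma(K_h,f)(\vartheta E)=\Gamma(K_h,f)(E)$ for every $\vartheta\in{\rm O}(n-1)$. Suppose first that $G$ is non-constant on ${\mathbb S}^{n-2}=\Sn\cap\R^{n-1}$. Choose $a_0,a_1\in{\mathbb S}^{n-2}$ with $G(a_0)\ne G(a_1)$ and $\vartheta\in{\rm O}(n-1)$ with $\vartheta a_0=a_1$; since $\vartheta$ fixes $e_n$, $\vartheta E=((a_1)^{2q},(-e_n)^{s_0})$ is again of the form admitted by Lemma~\ref{Lemma5.1}. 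Subtracting the two instances of the displayed identity gives
$$\big|\Gamma(K_h,f)(E)-\Gamma(K_h,f)(\vartheta E)\big|\ge b(n,k)^{-1}W_k(K_h,f)\,|G(a_0)-G(a_1)|-2C_5\,W_k(K_h,f)\,\varepsilon.$$
As $|G(a_0)-G(a_1)|$ is a fixed positive number, one may now \emph{first} fix $\varepsilon$ so small that the right-hand side is positive and \emph{then} construct $K_{h(\varepsilon)}$, $f_\varepsilon$, $\omega_{h(\varepsilon)}$ accordingly; the resulting strict inequality contradicts the ${\rm O}(n-1)$-invariance just noted, and with $K:=K_{h(\varepsilon)}$, $f:=f_\varepsilon$, this $E$ and this $\vartheta$ the lemma follows.

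It therefore remains to rule out the possibility that $G$ is constant on ${\mathbb S}^{n-2}$, and this is the one genuine obstacle; it is exactly here that the hypothesis $n\ge 4$, i.e.\ that $\R^{n-1}$ has at least three coordinates, enters. The plan is to show that for $n\ge 4$ and $1\le k\le n-2$ the functions $1$ and $H_j(a):=\sum_{|S|=k}(\sum_{\ell\in S}a_\ell^2)^j$, $j=2,\dots,d$, are linearly independent on ${\mathbb S}^{n-2}$: writing everything in the affine chart $a_{n-1}^2=1-\sum_{\ell\le n-2}a_\ell^2$, one isolates for each $j$ a homogeneous monomial of degree $j$ occurring in $H_j$ with non-zero coefficient (hence not in $1,H_2,\dots,H_{j-1}$, by degree), so that a non-trivial relation $\sum_j c_j H_j\equiv\text{const}$ is impossible. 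Consequently $G=\sum_j c_jH_j$ is constant only if all $c_j=0$, contradicting the standing assumption; so $G$ is in fact always non-constant and the previous paragraph applies. (The degree-counting argument is transparent when some such monomial has even-degree exponents; in the few exceptional symmetric cases it can be supplemented by refining Lemma~\ref{Lemma5.1}, allowing also copies of $-e_n$ among the first $2q$ entries of $E$, which produces to leading order the polynomials $\sum_j c_j\binom{q-j}{\mu}\kappa_{j,\mu}H_j$ for $\mu=0,1,2,\dots$ with positive constants $\kappa_{j,\mu}$; the triangular structure of the matrix $(\binom{q-j}{\mu}\kappa_{j,\mu})$, together with the elementary fact that each individual $H_j$ with $j\ge 2$ is non-constant on ${\mathbb S}^{n-2}$ — as one sees by comparing its values at a coordinate vector and at $(1,\dots,1)/\sqrt{n-1}$ — again forces all $c_j=0$.) The linear-algebra statement just used is false for $n=3$ (for instance, with $k=1$ one has $H_3=\tfrac32 H_2-\tfrac12$ on the circle), which is precisely why the case $n=3$ requires the separate, more delicate construction. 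Thus the main work is twofold: controlling the $\varepsilon$-dependence so that the symmetry violation persists in the weak limit — which the display above accomplishes — and establishing this linear-independence fact, which carries the dimension restriction.
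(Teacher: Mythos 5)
The first half of your argument—fixing $\varepsilon$ before constructing $K_h,f,\omega_h$, passing to the limit via weak continuity of $\Gamma$ and of the area measure, noting $W_k(K_h,f)>0$, and playing the ${\rm O}(n-1)$-invariance of $K_h$ and $f$ against a non-constant $G$—is sound and is essentially the paper's argument. The gap is in the step you yourself flag as "the one genuine obstacle": the claimed linear independence of $1,H_2,\dots,H_d$ on ${\mathbb S}^{n-2}$ for all $n\ge 4$, $1\le k\le n-2$, is false precisely when $2k=n-1$ (possible for odd $n\ge 5$). Indeed, on the unit sphere of $\R^{n-1}$ with $2k=n-1$, pair each $k$-set $S$ with its complement and put $y_S=\sum_{\ell\in S}a_\ell^2$, so $y_{S^c}=1-y_S$; since $y^2+(1-y)^2=1-2y(1-y)$ and $y^3+(1-y)^3=1-3y(1-y)$, both $H_2$ and $H_3$ are affine functions of $\sum_{\text{pairs}}y_S(1-y_S)$, whence for example ($n=5$, $k=2$) one has $H_3=\tfrac32 H_2-\tfrac32$ identically on ${\mathbb S}^{3}$. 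Thus with $d=3$, $c_2=-\tfrac32c_3$, $c_3\neq 0$, your $G$ is constant although not all $c_j$ vanish, and no contradiction is obtained from your family of bodies and tuples. This exceptional case ($d$ odd, $2k=n-1$) is exactly the case the paper must treat by a genuinely different construction: it rebuilds the polytopes and the limit body inside the hyperplane $\R^{n-1}$ (replacing $(\R^n,\R^{n-1},e_n)$ by $(\R^{n-1},\R^{n-2},e_{n-1})$), uses a normal set $\Omega_h$ invariant under the rotations fixing both $e_n$ and $e_{n-1}$, and works with the coordinate subspaces of $\R^{n-2}$, for which the relevant leading coefficient $\binom{n-5}{k-2}-\binom{n-5}{k-1}$ is nonzero when $2k=n-1$.

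Your parenthetical fallback—allowing some copies of $-e_n$ among the first $2q$ entries of $E$—does not repair this as sketched. The whole point of Lemma \ref{Lemma5.1} is that with exactly $s_0$ entries equal to $-e_n$, every term with $s>s_0$ forces $u^s$ to absorb at least one argument $a\in\R^{n-1}$ and is therefore $O(\varepsilon)$. Once you insert further copies of $-e_n$, the permutations can feed only $-e_n$'s to $u^s$ for $s_0<s\le s_0+2\mu$, so those terms are of leading order; the resulting limit expression then mixes the coefficients $c_{mjs}$ for several values of $s$, not just the $c_j=c_{(q-j)js_0}$, and the asserted triangular structure in $\mu$ is not established. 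So either you must prove a refined version of Lemma \ref{Lemma5.1} handling these mixed tuples (nontrivial), or, as in the paper, you must change the geometric construction in the case $2k=n-1$, $d$ odd. (The paper's case distinction $d$ even versus $d$ odd, and its explicit counterexamples to the naive invariance claim, are there exactly because of this phenomenon.)
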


If this is proved, then the invariance of $K$ and $f$ under $\vartheta$ and the rotation covariance of $\Gamma$ give
$$ \Gamma(K,f)(\vartheta E)=\Gamma(\vartheta K,\vartheta f)(\vartheta E) = \Gamma(K,f)(E).$$
This is a contradiction, which finishes the proof of Theorem 5.1 for $n\ge 4$. 

\vspace{2mm}

\noindent{\em Proof of Lemma} 5.2. Let ${\rm O}(n,e_n)$ denote the group of rotations of $\R^n$ that fix $e_n$. First we want to show that the tensor
$$ \Upsilon:= \sum_{j=2}^d c_j Q^{q-j}\sum_{i=1}^{b(n,k)} Q_{L_i}^j $$
is not invariant under ${\rm O}(n,e_n)$. For $x=(x_1,\dots, x_{n-1},0)\in\R^n$ with $\|x\|=1$ we have
$$ p_\Upsilon(x):= \Upsilon(\underbrace{x,\dots,x}_{2q}) =  \sum_{j=2}^d c_j  \sum_{I\subset\{1,\dots,n-1\},\,|I|=k}\left(\sum_{i\in I} x_i^2\right)^j.$$

First let $d$ be even. Consider $x= (x_1,x_2,0,\dots,0)\in\R^n$ with $\|x\|=1$. For such $x$,
$$ p_\Upsilon(x)=\sum_{j=2}^d c_j\left\{\binom{n-3}{k-2}(x_1^2+x_2^2)^j+\binom{n-3}{k-1}(x_1^{2j}+x_2^{2j})\right\}.$$
Here and below we make use of the convention that $\binom{n}{m}=0$ for $m<0$. With $x(\lambda)= (\lambda,\sqrt{1-\lambda^2},0,\dots,0)$, $\lambda\in[0,1]$, we get
$$ p_\Upsilon(x(\lambda)) = \sum_{j=2}^d c_j\left\{\binom{n-3}{k-2}+\binom{n-3}{k-1}(\lambda^{2j}+(1-\lambda^2)^j)\right\}=2c_d\binom{n-3}{k-1}\lambda^{2d}+\dots,$$
where the dots stand for terms where the exponent of $\lambda$ is less than $2d$. Since $\lambda\mapsto  p_\Upsilon(x(\lambda))$ is not a constant function, the polynomial $p_\Upsilon$ is not constant on unit vectors in $\R^{n-1}$ and hence is not invariant under ${\rm O}(n,e_n)$.

Now let $d$ be odd. For $x=(x_1,x_2,x_3,0,\dots,0)\in\R^n$ with $\|x\|=1$ we have
\begin{eqnarray*}
p_\Upsilon(x) &=& \sum_{j=2}^d c_j \Bigg\{\binom{n-4}{k-3}
 +\binom{n-4}{k-2} \left[\left(x_1^2+x_2^2\right)^j+ \left(x_1^2+x_3^2\right)^j +\left(x_2^2+x_3^2\right)^j \right]\\
&&+ \binom{n-4}{k-1}\left(x_1^{2j}+x_2^{2j}+x_3^{2j}\right)\Bigg\}.
\end{eqnarray*}
With
$$ x(\lambda,\mu)= \left(\lambda,\mu\sqrt{1-\lambda^2},\sqrt{1-\mu^2}\sqrt{1-\lambda^2},0,\dots,0\right)\in\R^n,\qquad \lambda,\mu\in[0,1],$$
we get by an elementary calculation that
\begin{eqnarray*}
p_\Upsilon(x(\lambda,\mu)) &=& dc_d\left[\binom{n-4}{k-2}-\binom{n-4}{k-1}\right]\mu^{2d-2}\lambda^{2d}+\dots,
\end{eqnarray*}
where the dots stand for terms in which the exponent of $\lambda$ is less than $2d$ or the exponent of $\mu$ is less than $2d-2$. If $2k\not=n-1$, then $\binom{n-4}{k-2}-\binom{n-4}{k-1}\not=0$, hence the function
\begin{equation}\label{43}
(\lambda,\mu)\mapsto p_\Upsilon(x(\lambda,\mu)),\quad \lambda,\mu\in[0,1],
\end{equation}
is not constant. Let us first assume that $2k\not=n-1$.

In both cases, $d$ even and $d$ odd, we have seen that the polynomial $p_\Upsilon$ and hence the tensor $\Upsilon$ is not ${\rm O}(n,e_n)$ invariant. Hence, there are a unit vector $a\in\R^{n-1}$ and a rotation $\vartheta\in{\rm O}(n,e_n)$ such that the $(2q)$-tuple $E':= (a,\dots,a)$ satisfies
$$ |\Upsilon(\vartheta E')-\Upsilon(E')|=M>0.$$ 
By (\ref{53}), for $E:= (E',-e_n,\dots,-e_n)$ we have 
$$ \Gamma(P_{h,t},f)(E) = b(n,k)^{-1}W_k(P_{h,t},f)\Upsilon(E')+R_5(E)$$
with $|R_5(E)|\le C_5 W_k(P_{h,t},f)\varepsilon$, and similarly for $\vartheta E$. Thus, we get
\begin{eqnarray*}
& & |\Gamma(P_{h,t},f)(\vartheta E)-\Gamma(P_{h,t},f)(E)|\\
& & = |b(n,k)^{-1} W_k(P_{h,t},f)\Upsilon(\vartheta E')+ R_5(\vartheta E)-b(n,k)^{-1}W_k(P_{h,t},f)\Upsilon(E')- R_5(E)|\\
& & \ge b(n,k)^{-1} W_k(P_{h,t},f)|\Upsilon(\vartheta E')-\Upsilon(E')|-|R_5(\vartheta E)|-|R_5(E)|\\
& & \ge b(n,k)^{-1} W_k(P_{h,t},f)(M-C_6\varepsilon)
\end{eqnarray*}
with $C_6:= 2b(n,k)C_5$. The constants $M>0$ and $C_6$ are independent of $\varepsilon$. Hence, we can choose $\varepsilon>0$ so small that $M-C_6\varepsilon>0$.

By (\ref{35a}) we have 
$$W_k(P_{h,t},f)=\omega_{n-k}\int_{\Sn}f\,\D \Psi_k(P_{h,t},\cdot).$$ 
From $\lim_{t\to 0}P_{h,t}=K_h$ and the weak continuity of the $k$th area measure we deduce that
$$ \lim_{t\to 0} W_k(P_{h,t},f)= \omega_{n-k}\int_{\Sn} f\,\D\Psi_k(K_h,\cdot).$$
Since $\int f\,\D\Psi_k(K_h,\cdot)>0$ (which follows from \cite[(4.20), (4.26)]{Sch14} and the fact that all principal radii of curvature of the paraboloid ${\mathcal R}$ are positive), there is a positive number $W$ such that $b(n,k)^{-1}W_k(P_{h,t},f)\ge W$ for all sufficiently small $t>0$. We assume that $t$ is sufficiently small in this sense. Then we have
$$ |\Gamma(P_{h,t},f)(\vartheta E)-\Gamma(P_{h,t},f)(E)|\ge W(M-C_6\varepsilon)>0.$$

From the convergence $P_{h,t}\to K_h$ for $t\to 0$ and from the assumed weak continuity of $\Gamma$ it follows that
$$ |\Gamma(K_h,f)(\vartheta E) - \Gamma(K_h,f)(E)| \ge W(M-C_6\varepsilon)>0.$$
This completes the proof of Lemma 5.2 in the case where $2k\not= n-1$.

It remains to consider the case where
\begin{equation}\label{44}
2k=n-1\quad\mbox{and} \quad d\mbox{ is odd}.
\end{equation}
In this case, $n\ge 5$. As before, we denote by $\R^{n-1}$ the space spanned by the basis vectors $e_1,\dots,e_{n-1}$, and in addition by $\R^{n-2}$ the space spanned by $e_1,\dots,e_{n-2}$. In $\R^{n-1}$ we construct $(n-1)$-dimensional polytopes $P_{h,t}$ and convex bodies $K_h$ as above, just by replacing the previous triple $(\R^n,\R^{n-1},e_n)$ by 
$(\R^{n-1},\R^{n-2},e_{n-1})$. The numbers $t,h,\varepsilon$ and the set $\omega_h$ have the same meaning as before. The set $\omega_h$ is now a subset of the unit sphere of $\R^{n-1}$. We define the set
$$ \Omega_h:= \left\{ \sqrt{1-\alpha^2}\,v +\alpha e_n: v\in \omega_h,\,|\alpha|\le\varepsilon\right\}.$$
This set is invariant under the group ${\rm O}(n,e_n,e_{n-1})$, consisting of the rotations of $\R^n$ that fix $e_n$ and $e_{n-1}$, and it has nonempty interior in $\Sn$. Since $\omega_h$ is $\varepsilon$-close to $e_{n-1}$, we can, in view of (\ref{34a}) and (\ref{35}), choose $\varepsilon>0$ so small that
\begin{equation}\label{47}
\langle u,-e_{n-1}\rangle\ge 1-\varepsilon \quad \mbox{for all }u\in\Omega_h
\end{equation}
and
\begin{equation}\label{48}
|\langle u,a\rangle| \le\varepsilon \|a\| \quad\mbox{for all }u\in\Omega_h\mbox{ and all }a\in{\mathbb R}^{n-2}.
\end{equation}
Let $f$ be a nonnegative, continuous function on $\Sn$ that is invariant under ${\rm O}(n,e_n,e_{n-1})$, has its support in $\Omega_h$ and is not identically zero. We define
$$ {\mathcal F}_{k,h}(P_{h,t})=\{F\in{\mathcal F}_k(P_{h,t}):\Omega_h\cap \nu(P_{h,t},F)\not=\emptyset\}.$$
As above, we can choose $t_0>0$ so small that for $0<t<t_0$ each face of ${\mathcal F}_{k,h}(P_{h,t})$ lies above some face $F^\Box$ of the complex ${\mathcal C}_t$ in $\R^{n-2}$. The subspaces $L_1,\dots,L_{b(n-1,k)}\subset\R^{n-2}$ and the function $W_k(P_{h,t},f)$ are defined as before. Similar estimates as above show that for each $p$-tuple
$$ E:= (E',-e_{n-1},\dots,-e_{n-1}),\quad E':=(\underbrace{a,\dots,a}_{2q})$$
with $a\in \R^{n-2}$, $\|a\|=1$, an estimate corresponding to (\ref{53}) is valid, namely
\begin{equation}\label{49} 
\Gamma(P_{h,t},f)(E) = b(n-1,k)^{-1}W_k(P_{h,t},f)\left(\sum_{j=2}^d c_j Q^{q-j}\sum_{i=1}^{b(n-1,k)} Q_{L_i}^j\right)(E')+R_6(E)
\end{equation}
with 
$$ |R_6(E)|\le C_7W_k(P_{h,t},f)\varepsilon.$$

The tensor
$$ \Upsilon':= \sum_{j=2}^d c_j Q^{q-j}\sum_{i=1}^{b(n-1,k)} Q_{L_i}^j $$
now corresponds to the polynomial
$$ p_{\Upsilon'}(x)= \Upsilon'(\underbrace{x,\dots,x}_{2q}) =\sum_{j=2}^d c_j  \|x\|^{2(q-j)} \sum_{I\subset\{1,\dots,n-2\},\,|I|=k}\left(\sum_{i\in I} x_i^2\right)^j.$$
For
$$ x(\lambda,\mu)= \left(\lambda,\mu\sqrt{1-\lambda^2},\sqrt{1-\mu^2}\sqrt{1-\lambda^2},0,\dots,0\right)\in\R^n,\qquad \lambda,\mu\in[0,1],$$
we obtain (observing that $d$ is odd)
\begin{eqnarray*}
p_{\Upsilon'}(x(\lambda,\mu)) &=& dc_d\left[\binom{n-5}{k-2}-\binom{n-5}{k-1}\right]\mu^{2d-2}\lambda^{2d}+\dots,
\end{eqnarray*}
where the dots stand for terms in which the exponent of $\lambda$ is less than $2d$ or the exponent of $\mu$ is less than $2d-2$. Since $\binom{n-5}{k-2}-\binom{n-5}{k-1}\not=0$ for $2k=n-1$, this function is not constant.

The proof can now be completed as before. The essential fact that $\int f\,\D\Psi_k(K_h,\cdot)>0$, although now $K_h$ is of dimension $n-1$, remains true since $k\le n-2$.
\qed 

\vspace{2mm}

Finally, we consider the case $n=3$, $k=1$. We shall need Lemma \ref{Lemma5.1} for this case without change, hence all assumptions about $\Gamma, \varepsilon,\omega,s_0,q, E',E,f,c_j,d$ are as in that lemma. 

For even numbers $d$, the distinction between dimension three and higher dimensions in the proof of Lemma \ref{Lemma5.2} was not necessary, hence we can now assume that $d$ is odd. For odd numbers $d$, the previous proof breaks down, since for special values of $c_2,\dots,c_d$ with $c_d\not=0$ it can happen that the tensor $\Upsilon$ is invariant under ${\rm O}(3,e_3)$. Therefore, we need to modify the approximating polytopes $P_{h,t}$. We do this as follows.

Recall that we have identified $\R^2$ with the plane spanned by $e_1$ and $e_2$. Let $\beta_d:=\pi/d$. In $\R^2$, we define the vectors
$$ b_1 = e_1,\quad b_2 = (\cos\beta_d)e_1 +(\sin\beta_d)e_2\quad b_3 = b_2-b_1. $$
The triangle $T$ with vertices $0$, $b_1$ and $b_2$ has angles $\beta_d$ at $0$ and $((d-1)/2)\beta_d$ at $b_1$ and at $b_2$.
For $t>0$, the lines
$$ \R b_1+\zeta t b_2,\quad \R b_2+\zeta t b_3,\quad \R b_3+\zeta t b_1,\quad \zeta\in{\mathbb Z},$$
divide the plane $\R^2$ into triangles congruent to $T$. Together with their edges and vertices they define a polygonal complex in $\R^2$, which we denote by ${\mathcal T}_t$. In the construction of the polytopes $P_{h,t}$, as described after the proof of Lemma \ref{Lemma5.1}, we replace (for $n=3$) the complex ${\mathcal C}_t$ by the complex ${\mathcal T}_t$. Thus, we use the lifting map $\ell$, defined by (\ref{lift}), to lift the vertices of ${\mathcal T}_t$ to the paraboloid ${\mathcal R}$. The polyhedral set $R_t$ is the convex hull of the images. For $z\in\R^2$, the affine map $\alpha_z$ is again defined by (\ref{aff}), where now $r_t$ is the radius of the cirumcircle (that is, the circle through the vertices) of $T$. The closed disc bounded by the circumcircle  of $T$ contains no other vertices of ${\mathcal T}_t$ besides the vertices of T. By symmetry, the corresponding statement is true for all triangles of ${\mathcal T}_t$. Therefore, under orthogonal projection to $\R^2$, the faces of $R_t$ correspond precisely to the faces of ${\mathcal T}_t$. The role of the cubes $C_z$ of ${\mathcal C}_t$ is now played by the triangles $T_z$ of ${\mathcal T}_t$, where $z$ is the circumcenter (the center of the circumcircle) of $T_z$. On the vertices of $T_z$, the lifting map $\ell$ coincides with the affine map $\alpha_z$. Each face $F$ of $R_t$ is of the form $F=\alpha_z T_z$ for some triangle $T_z$ of ${\mathcal T}_t$.

In $\R^2$ we define the one-dimensional subspaces 
$$ L_r:= \R((\cos r\beta_d)e_1 +(\sin r\beta_d)e_2),\qquad r=0,1,\dots, d-1.$$
We observe that each edge of the complex ${\mathcal T}_t$ is parallel to one of the lines $L_0, L_1, L_{(d+1)/2}$; note that $(d+1)/2$ is an integer $\le d-1$.

The definitions of $K_h,P_{h,t},\omega_h,\lambda_z$ and the choices of $h,t_0$ are now, {\em mutatis mutandis}, the same as before. We assume that $0<t<t_0$. Let $\vartheta_d\in{\rm SO}(3,e_3)$ be the rotation by the angle $\beta_d$. The crucial new polytopes are the Minkowski averages
$$ P_{h,t}^d:= \frac{1}{d}\sum_{l=0}^{d-1}\vartheta_d^lP_{h,t}.$$
These polytopes clearly satisfy
$$ \vartheta_dP_{h,t}^d = P_{h,t}^d$$
and
$$ \lim_{t\to 0}P_{h,t}^d=K_h.$$
Define
$$ {\mathcal F}_{1,h}(P_{h,t}^d) := \{F\in{\mathcal F}_1(P_{h,t}^d):\omega_h\cap \nu(P_{h,t}^d,F)\not=\emptyset\}.$$
Let $F\in{\mathcal F}_{1,h}(P_{h,t}^d)$. Since $\omega_h$ is open and because of relation (2.26) in \cite{Sch14}, there exists a vector $u\in\omega_h\cap \nu(P_{h,t}^d,F)$ such that $F(P_{h,t}^d,u)=F$, where $F(K,u)$, for a convex body $K$, denotes the support set of $K$ with outer normal vector $u$. By \cite[Thm. 1.7.5(c)]{Sch14} we have
$$ F= \frac{1}{d}\sum_{l=0}^{d-1}F(\vartheta_d^lP_{h,t},u).$$
Since $\dim F=1$, for each $l\in\{0,\dots,d-1\}$ the support set $F(\vartheta_d^lP_{h,t},u)$ is either a vertex or an edge of $\vartheta_d^lP_{h,t}$ parallel to $F$, and in the latter case its orthogonal projection to $\R^2$ is parallel to one of the one-dimensional subspaces $L_r$, $r\in\{0,\dots,d-1\}$; here $r$ is independent of $l$, since all the considered edges are parallel. It follows that the orthogonal projection of the edge $F$ to $\R^2$ is parallel to $L_r$. We denote by ${\mathcal F}_{1,h}^{\, r}(P_{h,t}^d)$ the set of edges $F\in {\mathcal F}_{1,h}(P_{h,t}^d)$ for which the projection to $\R^2$ is parallel to $L_r$, $r=0,\dots,d-1$.

Let $f$ be a nonnegative, continuous function on ${\mathbb S}^2$ that is invariant under ${\rm O}(3,e_3)$, has its support in $\omega_h$ and is not identically zero.  

The fact that each edge of $P_{h,t}^d$ is parallel to an edge of some $\vartheta_d^lP_{h,t}$ has the consequence
that the former estimates, which depend only on the directions of such edges, remain valid. In particular, in the same way as (\ref{53}) was proved, we obtain 
\begin{eqnarray}\label{5.1} 
& & \Gamma(P_{h,t}^d,f)(E)\\
& & = \sum_{j=2}^d c_j  \sum_{r=0}^{d-1}  (Q^{q-j}Q_{L_r}^j)(E')\sum_{F\in{\mathcal F}^{\, r}_{1,h}(P_{h,t}^d)}
\Ha^1(F)\int_{\nu(P_{h,t}^d,F)} f\,\D{\mathcal H}^1 +R_7(E) \nonumber
\end{eqnarray}
with 
$$ |R_7(E)|\le C_8 W_1(P_{h,t}^d,f)\varepsilon.$$ 
Here
$$ \sum_{F\in{\mathcal F}^{\, r}_{1,h}(P_{h,t}^d)} \Ha^1(F)\int_{\nu(P_{h,t}^d,F)} f\,\D{\mathcal H}^1$$
is independent of $r$, since $P_{h,t}^d$ is invariant under $\vartheta_d$. The sum of these terms over all $r$ is equal to $W_1(P_{h,t}^d,f)$. Thus, we obtain
\begin{equation}\label{5.2} 
\Gamma(P_{h,t}^d,f)(E)=\frac{1}{d}W_1(P_{h,t}^d,f)\left(\sum_{j=2}^d c_j Q^{q-j}\sum_{r=0}^{d-1} Q_{L_r}^j\right)(E')+R_7(E).
\end{equation}

We want to show that the tensor
$$ \Upsilon_d:= \sum_{j=2}^d c_jQ^{q-j}\sum_{r=0}^{d-1}Q^j_{L_r}$$
is not invariant under ${\rm SO}(3,e_3)$. For $x=(x_1,x_2,0)\in\R^3$  with $\|x\|=1$ we have
$$ p_{\Upsilon_d}(x):=\Upsilon_d(\underbrace{x,\dots,x}_{2q}) =\sum_{j=2}^dc_j\sum_{r=0}^{d-1}(x_1\cos r\beta_d+x_2\sin r\beta_d)^{2j}.$$
For $x(\lambda):=(\lambda,\sqrt{1-\lambda^2},0)$, $\lambda\in[0,1]$, this reads
$$ p_{\Upsilon_d}(x(\lambda))=\sum_{j=2}^d c_j \sum_{r=0}^{d-1}\left(\lambda \cos r\beta_d + \sqrt{1-\lambda^2}\,\sin r \beta_d \right)^{2j}.$$
The right-hand side is a polynomial in $\lambda$, defined for all real $\lambda$, and for the coefficient $A_{2d}$ of $\lambda^{2d}$ in this polynomial we obtain
\begin{eqnarray*}
A_{2d} &=& \lim_{\lambda\to\infty}\lambda^{-2d}\sum_{j=2}^d c_j\sum_{r=0}^{d-1}\left(\lambda\cos r\beta_d + \sqrt{1-\lambda^2}\,\sin r\beta_d \right)^{2j}\\
&=& c_d \sum_{r=0}^{d-1}(\cos r\beta_d+{\rm i}\sin r\beta_d)^{2d}\\
&=& c_d \sum_{r=0}^{d-1} \exp\left(r\frac{\pi}{d}{\rm i}\cdot 2d\right) =dc_d \not=0.
\end{eqnarray*}
Hence, $p_{\Upsilon_d}(x(\lambda))$ is not a constant function of $\lambda$. Once we know this, the proof can be completed precisely as before.

\noindent Authors' addresses:\\[2mm]
Daniel Hug\\
Karlsruhe Institute of Technology, Department of Mathematics\\
D-76128 Karlsruhe, Germany\\
E-mail: daniel.hug@kit.edu\\[3mm]
Rolf Schneider\\
Mathematisches Institut, Albert-Ludwigs-Universit{\"a}t\\
D-79104 Freiburg i. Br., Germany\\
E-mail: rolf.schneider@math.uni-freiburg.de


\begin{thebibliography}{00}

\bibitem{Ale99a} Alesker, S., Continuous rotation invariant valuations on convex sets. {\em Ann. of Math.} {\bf 149} (1999), 977--1005.

\bibitem{Ale99b} Alesker, S., Description of continuous isometry covariant valuations on convex sets. {\em Geom. Dedicata} {\bf 74} (1999), 241--248.

\bibitem{BDMW02} Beisbart, C., Dahlke, R., Mecke, K., Wagner, H., Vector- and tensor-valued descriptors for spatial patterns. In {\em Morphology of Condensed Matter} (K. Mecke, D. Stoyan, eds), Lecture Notes in Physics {\bf 600}, pp. 238--260, Springer, Berlin, 2002.

\bibitem{Ber10} Bernig, A., Algebraic integral geometry.  In {\em Global Differential Geometry} (C. B\"ar, J. Lohkamp, M. Schwarz, eds), pp. 107--145, Springer, Berlin, 2012. arXiv:1004.3145v3.

\bibitem{BF10} Bernig, A., Fu, J. H. G., Hermitian integral geometry.  {\em Ann. of Math.} {\bf 173} (2011), 907--945.

\bibitem{BFS14} Bernig, A., Fu, J. H. G., Solanes, G., Integral geometry of complex space forms. {\em Geom. Funct. Anal.} {\bf 24} (2014), 403--492. 

\bibitem{BH14} Bernig, A., Hug, D., Kinematic formulas for tensor valuations. arXiv:1402.2750v1.

\bibitem{Bla37}  Blaschke, W., \textit{Vorlesungen \"{u}ber Integralgeometrie.} 3rd edn, VEB Deutsch. Verl. d. Wiss., Berlin, 1955 (1st edn: Part I, 1935; Part II, 1937).

\bibitem{DO11} Devadoss, S. L., O'Rourke, J., {\em Discrete and Computational Geometry.} Princeton University Press, Princeton, 2011.

\bibitem{Fed59} Federer, H., Curvature measures. {\em Trans. Amer. Math. Soc.} {\bf 93} (1959), 418--491.

\bibitem{Fed69} Federer, H., {\em Geometric Measure Theory}. Springer, Berlin, 1969.

\bibitem{For04} Fortune, S., Voronoi diagrams and Delaunay triangulations. In {\em Handbook of Discrete and Computational Geometry} (J. E. Goodman, J. O'Rourke, eds), 2nd edn, pp. 513--528, Chapman \& Hall/CRC, Boca Raton, 2004.

\bibitem{Fu11} Fu, J. H. G., Algebraic integral geometry. (Advanced Course on Integral Geo\-metry and Valuation Theory), Notes of the Course, pp. 35--82, Centre de Recerca Matem\`{a}tica, Bellaterra 2010;  
arXiv:1103.6256v1,\\{\sf http://www.crm.es/Publications/quaderns/Quadern60.pdf}.

\bibitem{Gla97} Glasauer, S., A generalization of intersection formulae of integral geometry. {\em Geom. Dedicata} {\bf 68} (1997), 101--121.

\bibitem{Had50} Hadwiger, H., Einige Anwendungen eines Funktionalsatzes in der r\"aumlichen Integralgeometrie. {\em Monatsh. Math.} {\bf 54} (1950), 345--353.

\bibitem{Had51} Hadwiger, H., Beweis eines Funktionalsatzes f\"ur konvexe K\"orper. {\em Abh. Math. Sem. Univ. Hamburg} {\bf 17} (1951), 69--76.

\bibitem{Had52} Hadwiger, H., Additive Funktionale k-dimensionaler Eik\"orper I. {\em Arch.Math.} {\bf 3} (1952), 470--478.

\bibitem{Had56} Hadwiger, H., Integrals\"atze im Konvexring. {\em Abh. Math. Sem. Univ. Hamburg} {\bf 20} (1956), 136--154.

\bibitem{Had57} Hadwiger, H., \emph{Vorlesungen \"{u}ber Inhalt, Oberfl\"{a}che und Isoperimetrie}. Sprin\-ger, Berlin, 1957.

\bibitem{HS71} Hadwiger, H., Schneider, R., Vektorielle Integralgeometrie, {\em Elem. Math.} \textbf{26} (1971), 49--57.

\bibitem{HHKM13} H\"orrmann, J., Hug, D., Klatt, M., Mecke, K., Minkowski tensor density formulas for Boolean models. {\em 
Adv. Appl. Math.} {\bf 55} (2014), 48--85.

\bibitem{Hug95} Hug, D., On the mean number of normals through a point in the interior of a convex body. {\em Geom. Dedicata} {\bf  55} (1995), 319--340.

\bibitem{Hug98} Hug, D., Generalized curvature measures and singularities of sets with positive reach. {\em Forum Math.} {\bf  100} (1998), 699--728. 

\bibitem{Hug98b} Hug, D., Absolute continuity for curvature measures of convex sets I. {\em Math. Nachr.} {\bf 195} (1998), 139--158.

\bibitem{HLS13} Hug, D., Last, G., Schulte, M., Second order properties and central limit theorems for geometric functionals of Boolean models. arXiv:1308.6519v1.

\bibitem{HS13} Hug, D., Schneider, R., H\"older continuity of normal cycles and of support measures of convex bodies. (in preparation)

\bibitem{HSS08a} Hug, D., Schneider, R., Schuster, R., The space of isometry covariant tensor valuations. {\em Algebra i Analiz} {\bf 19} (2007), 194--224 $=$ {\em St. Petersburg Math. J.} {\bf 19} (2008), 137--158.

\bibitem{HSS08b} Hug, D., Schneider, R., Schuster, R., Integral geometry of tensor valuations. {\em Adv. Appl. Math.} {\bf 41} (2008), 482--509.

\bibitem{JT13} Joswig, M., Theobald, T., {\em Polyhedral and Algebraic Methods in Computational Geometry}. Springer, London, 2013.

\bibitem{Kla95} Klain, D. A., A short proof of Hadwiger's characterization theorem. {\em Mathematika} {\bf 42} (1995), 329--339.

\bibitem{KR97} Klain, D. A., Rota,  G.--C., {\em Introduction to Geometric Probability.} Cambridge University Press, Cambridge, 1997.

\bibitem{McM93} McMullen, P., Valuations and dissections. In {\em Handbook of Convex Geometry} (P. M. Gruber, J. M. Wills, eds), vol. B, pp. 933--988, North-Holland, Amsterdam, 1993.

\bibitem{McM97} McMullen, P., Isometry covariant valuations on convex bodies. {\em Rend. Circ. Mat. Palermo}, Ser. II, Suppl. {\bf 50} (1997), 259--271.

\bibitem{McMS83} McMullen, P., Schneider, R., Valuations on convex bodies. In \emph{Convexity and its Applications} (P. M. Gruber, J. M. Wills, eds), pp. 170--247, Birkh\"{a}user, Basel, 1983.

\bibitem{Mec00}  Mecke, K. R., Additivity, convexity, and beyond: Applications of Minkowski functionals in statistical physics. In {\em Statistical Physics and Spatial Statistics} (K. R. Mecke, D. Stoyan, eds), Lecture Notes in Physics {\bf 554}, pp. 111--184, Springer, Berlin, 2000.

\bibitem{RZ01} Rataj, J., Z\"ahle, M., Curvatures and currents for unions of sets with positive reach, II. {\em Ann. Global Anal. Geom.} {\bf  20} (2001), 1--21.

\bibitem{RZ05} Rataj, J., Z\"ahle, M., General normal cycles and Lipschitz manifolds of bounded curvature. {\em  Ann. Global Anal. Geom.} {\bf  27} (2005), 135--156.

\bibitem{Rot98} Rota, G.--C., Geometric probability. {\em Math. Intelligencer} {\bf 20} (1998), 11--16.

\bibitem{Sal68} Sallee, G. T., Polytopes, valuations, and the Euler relation. \textit{Canad. J. Math.} \textbf{20} (1968), 1412--1424.

\bibitem{Sat75} Satake, I., {\em Linear Algebra}, Marcel Dekker, New York, 1975.

\bibitem{Sch72} Schneider, R., Kr\"ummungsschwerpunkte konvexer K\"orper, II. {\em Abh. Math. Sem. Univ. Hamburg} \textbf{37} (1972), 204--217.

\bibitem{Sch75} Schneider, R., Kinematische Ber\"{u}hrma{\ss}e f\"{u}r konvexe K\"{o}rper. \textit{Abh. Math. Sem. Univ. Hamburg} \textbf{44} (1975), 12--23.

\bibitem{Sch78} Schneider, R., Curvature measures of convex bodies. {\em Ann. Mat. Pura Appl.} {\bf 116} (1978), 101--134.

\bibitem{Sch99} Schneider, R., Convex bodies in exceptional relative positions. {\em J. London Math. Soc.} (2) {\bf 60} (1999), 617--629.

\bibitem{Sch00} Schneider, R., Tensor valuations on convex bodies and integral geometry. {\em Rend. Circ. Mat. Palermo}, Ser. II,  Suppl. {\bf 65 } (2000), 295--316. 

\bibitem{Sch12} Schneider, R., Local tensor valuations on convex polytopes. {\em Monatsh. Math.} {\bf 171} (2013), 459--479.

\bibitem{Sch14} Schneider, R., {\em Convex Bodies: The Brunn-Minkowski Theory.} 2nd edn, Encyclopedia of Mathematics and Its Applications {\bf 151}, Cambridge University Press, Cambridge, 2014.

\bibitem{SW08} Schneider, R.,  Weil, W., {\em Stochastic and Integral Geometry.} Springer, Berlin, 2008.

\bibitem{SchT10} Schr\"{o}der--Turk, G. E., Kapfer, S., Breidenbach, B., Beisbart, C., Mecke, K., Tensorial Minkowski functionals and anisotropy measures for planar patterns. {\em J. Microscopy} {\bf 238} (2010), 57--74.

\bibitem{SchT11} Schr\"{o}der--Turk, G. E., Mickel, W., Kapfer, S. C., Klatt, M. A., Schaller, F. M.,  Hoffmann, M. J. F., Kleppmann, N., Armstrong, P., Inayat, A., Hug, D., Reichelsdorfer, M., Peukert, W., Schwieger, W., Mecke, K., Minkowski tensor shape analysis of cellular, granular and porous structures. {\em Advanced Materials, Special Issue: Hierarchical Structures Towards Functionality} {\bf 23} (2011), 2535--2553. 

\bibitem{SchT12} Schr\"{o}der--Turk, G. E., Mickel, W., Kapfer, S. C., Schaller, F. M., Breidenbach, B., Hug, D., Mecke, K., Minkowski tensors of anisotropic spatial structure. {\em New J. of Physics} {\bf 15} (2013), 083028 (38 pp).

\bibitem{Wan14} Wannerer, T., The module of unitarily invariant area measures. {\em J. Differential Geom.} {\bf 96} (2014), 141--182.

\bibitem{Zae86} Z\"ahle, M., Integral and current representation of Federer's curvature measures. {\em Arch. Math.} {\bf 46} (1986), 557--567.

\bibitem{Zae90} Z\"ahle, M., Approximation and characterization of generalised Lipschitz--Killing curvatures, {\em Ann. Global Anal. Geom.} {\bf 8} (1990),  249--260.

\end{thebibliography}
\end{document}